\documentclass[journal]{IEEEtran}
\usepackage{amsmath}
\usepackage{amsfonts,amssymb}
\usepackage{CJK}
\usepackage{multicol}
\usepackage{multirow}
\usepackage{diagbox}
\usepackage{graphicx}
\usepackage{subfig} 
\usepackage{lineno} 
\allowdisplaybreaks % allow the align display in different pages
\usepackage{epstopdf} % transfer the figure with eps formation to figure with pdf formation
\usepackage{algorithm}
\usepackage{algorithmic}
\usepackage{pgf,tikz}
\usepackage{mathrsfs}
\usepackage{tikz}
\usepackage{enumerate}
\usepackage{makecell}
\usepackage{booktabs}       % 用于三线表规范
\usepackage{threeparttable} % 用于表格脚注和注释
\usepackage{graphicx}     
\usepackage[utf8]{inputenc}
\usepackage{subfig} 
\usepackage{bm}
\usepackage{subfloat}
\usepackage{colortbl,booktabs}%�ڶ����������˼���*rule
\usepackage{cite}
\usepackage{etoolbox}\AtBeginEnvironment{algorithmic}{\small}

\usepackage[subfigure]{tocloft}
\usetikzlibrary{arrows,shapes,snakes,shadows,positioning,automata,patterns}
\usetikzlibrary{trees,decorations.pathmorphing,decorations.markings}
\usetikzlibrary{backgrounds}
\usepackage[%dvipdfm,  %pdflatex,pdftex %�������������ļ��ķ�ʽ��ͬ
            pdfstartview=FitH,
            CJKbookmarks=true,
            bookmarksnumbered=true,
            bookmarksopen=true,
            colorlinks, %ע�͵������򽻲�����Ϊ��ɫ�߿�(��colorlinks��pdfborderͬʱע�͵�)
            pdfborder=001,   %ע�͵������򽻲�����Ϊ��ɫ�߿�
            linkcolor=blue,
            anchorcolor=blue,
            citecolor=blue
            ]{hyperref}
\newtheorem{theorem}{Theorem}
\newtheorem{assumption}{Assumption}

\newtheorem{lemma}{Lemma}
\newtheorem{corollary}{Corollary}

\newtheorem{remark}{Remark}

\usepackage{tcolorbox}
\makeatletter
\newcommand{\pushright}[1]{\ifmeasuring@#1\else\omit\hfill$\displaystyle#1$\fi\ignorespaces}
\newcommand{\pushleft}[1]{\ifmeasuring@#1\else\omit$\displaystyle#1$\hfill\fi\ignorespaces}
\makeatother
\usepackage{makecell, multirow, graphicx}

\newbool{submitA} \setbool{submitA}{false} % true 为版本 A，false 为版本 B

\definecolor{cuhkpl}{RGB}{152,24,147}

\def\BibTeX{{\rm B\kern-.05em{\sc i\kern-.025em b}\kern-.08em
		T\kern-.1667em\lower.7ex\hbox{E}\kern-.125emX}}
\begin{document}
	\title{Dynamic Regret and Tightened Constraint Violation for Communication-Efficient Distributed Online\\ Bilevel Optimization}
	\author{Fan Li,~Xinlei Yi,~Guanghui Wen,~Yang Shi and Tao Yang
		%	\thanks{This work was supported in part by the National Natural Science Foundation of China under Grant 62133003. (Corresponding author: Tao Yang.) }
		\thanks{F. Li, and T. Yang are with State Key Laboratory of Synthetical Automation for Process Industries, Northeastern University, Shenyang 110819, China (e-mail: 2410314@stu.neu.edu.cn;  yangtao@mail.neu.edu.cn).}
		\thanks{X. Yi is with the Department of Control Science and Engineering, College of Electronics and Information Engineering, Tongji University, Shanghai 201804, China (e-mail: xinleiyi@tongji.edu.cn).}
		\thanks{G. Wen is with the Department of Systems Science, School of Mathematics, Southeast University, Nanjing 210096, China (e-mail: wenguanghui@gmail.com).}
		\thanks{Y. Shi is with the Department of Mechanical Engineering, University of
			Victoria, Victoria, BC V8W 2Y2, Canada (e-mail: yshi@uvic.ca).}
	}
\maketitle

\begin{abstract}
This paper investigates the distributed online bilevel optimization problem with time-varying inequality constraints. In this setting, a multi-agent network is confronted with a hierarchical decision-making task characterized by coupling between the inner and outer optimization levels. We first propose a communication-efficient single-loop distributed online primal--dual algorithm based on communication compression, which tracks both the inner-level solution and the Hessian-inverse-vector product using only one update per-round. We show that this algorithm achieves a dynamic  network regret bound of $\mathcal{O}(T^{\max\{\theta_1,1-\theta_1\}}+T^{\theta_1 } P_T+H_{1,T}+E_{1,T})$, where~$\theta_1 \in (0,1)$, $P_T$ is the outer-level path variation, $H_{1,T}$ is the inner-level path variation, and $E_{1,T}$ is the gradient path variation alongside a network cumulative constraint violation bound of  $\mathcal{O}(T^{1-\theta_1/2})$, which is further reduced to $\mathcal{O}(T^{1-\theta_1})$ under Slater's condition. Furthermore, to eliminate the dependence of regret on the inner-level and gradient path variation, we propose a novel adaptive multi-loop distributed online algorithm to control the inner-level approximation accuracy at each round. By dynamically adjusting the number of inner-level updates based on a gradient-norm tolerance condition,  this algorithm yields an improved dynamic network regret bound of 
$\mathcal{O}(T^{\max\{\theta_1,1-\theta_1\}}+T^{\theta_1 } P_T)$ which depends solely on the outer-level path variation while maintaining the same network cumulative constraint violation bound. Finally, numerical simulations verify the effectiveness of the proposed algorithms.
\end{abstract}

\begin{IEEEkeywords}
Bilevel Optimization; communication compression; distributed algorithms; time-varying constraints
\end{IEEEkeywords}

\IEEEpeerreviewmaketitle
\section{Introduction}
Bilevel optimization has recently garnered extensive attention in the fields of machine learning and control as a rigorous mathematical paradigm for characterizing hierarchical decision making. Many complex tasks in reality, such as meta-learning~\cite{KKmetelearning}, hyperparameter optimization~\cite{KKHyperOpti}, and reinforcement learning~\cite{KKRL} often exhibit a natural bilevel hierarchical structure.

Driven by the explosive growth of data and distributed computational resources, extending classical centralized bilevel optimization to distributed settings has become a compelling requirement. Specifically, the authors of~\cite{chen2023decentralized111} propose the MA-DSBO algorithm utilizing the Hessian-inverse-vector product~(HvP) technique without explicit computation of full Hessian or Jacobian matrices. The authors of~\cite{DisBilAuto2023penalty} develop the decentralized alternating gradient method by leveraging the penalty function method. To eliminate inner computational loops, the authors of~\cite{niu2025lopa} present a single-loop personalized distributed algorithm that characterizes the impact of data heterogeneity. Furthermore, the authors of~\cite{niu2026hessian} combine time scale separation techniques to design a Hessian-free and single-loop algorithm for nonconvex-strongly convex settings. However, the aforementioned algorithms rely on static time-invariant cost functions, making them unsuitable for real-world applications in highly dynamic environments. For example, hierarchical resource allocation determines local allocations under an upper-level control parameter~\cite{ji2023lower}, while time-varying demands and resource availability motivate online costs and constraints~\cite{chen2017online}.

To address the case of dynamic environments, significant strides have been made in online bilevel optimization~\cite{OnlienBI11,OnlineBereNips33,OnlineFirAcx,Onlinelocalregret}. For example, pioneer work~\cite{OnlienBI11} formulates the online bilevel optimization framework to achieve sublinear regrets. Furthermore, the authors of~\cite{OnlineBereNips33} establish the online algorithm in the nonconvex-strongly convex setting with the window-averaging strategy, which aggregates information over a recent time window to mitigate temporal fluctuations. The authors of~\cite{OnlineFirAcx} utilize the Lagrangian reformulation to recast the online bilevel problem into single-level constrained optimization, thereby developing algorithms relying exclusively on first-order information. The authors of~\cite{Onlinelocalregret} develop novel efficient algorithms for online bilevel optimization that attain optimal regret bounds by dynamically adjusting inner iterations to track subproblem optimum drift and employing the window-averaging strategy.

In many networked applications, however, the data and computational resources involved in bilevel decision making are naturally distributed among multiple agents and vary over time.  These settings motivate the study of distributed online bilevel optimization.   
Existing bilevel optimization algorithms typically rely on Neumann series or conjugate gradient methods to approximate the Hessian inverse matrices with high precision~\cite{Hong2023two,Mengdi2018,CG11}. These approaches inevitably introduce computationally expensive nested loops for approximating the inner-level solution and the Hessian inverse matrices at each iteration. It is also not yet clear whether the approaches in~\cite{niu2025lopa,niu2026hessian} based on single-loop update mechanisms to obviate these computational loops can be generalized to online settings. Moreover, unlike single-level online optimization~\cite{KaiPseTac}, the dynamic network regret in online bilevel optimization typically depends on inner-level and gradient path variation~\cite{OnlienBI11,OnlineBereNips33,OnlineFirAcx,Onlinelocalregret}. These schemes fail to guarantee sublinear regret when severe environmental changes cause significant inner-level path variation. This motivates us to explore novel approaches to eliminate this dependence on inner-level path variation.

Practical physical systems are frequently subjected to complex time-varying inequality constraints, for example, dynamically varying capacity, power, and resource constraints in networked systems. 
To evaluate constraint satisfaction in online settings, cumulative constraint violation, which independently penalizes instantaneous violations, is widely adopted in constrained online optimization~\cite{Yu2017Online,Xinlei2025}. This metric is stricter than the conventional long-term constraint-violation metric in~\cite{Yuan2022Tac}.
 As a standard constraint qualification,  Slater's condition postulates the existence of a strictly feasible interior point~\cite{Boyd2004}. Leveraging this feasibility margin, the authors of~\cite{Yu2017Online} improve the long-term constraint violation by replacing the original constraint functions with non-negative clipped versions. However, as pointed out in~\cite{Xinlei2025}, this clipping operation destroys the strict feasibility margin. While the authors of~\cite{Xinlei2025} successfully tighten the bound in distributed single-level convex optimization by directly maximizing the regularized Lagrangian, extending this advancement to online bilevel optimization is nontrivial. Under the bilevel nested structure, the primal--dual updates dynamically couple with the continuous drift of the inner-level optimal trajectories. Most existing studies on online bilevel optimization such as~\cite{OnlienBI11,OnlineBereNips33,OnlineFirAcx} are confined to unconstrained settings or simple feasible domains. It remains unclear whether sublinear cumulative constraint violation and corresponding bound reductions under Slater’s condition can be achieved in online bilevel optimization with time-varying inequality constraints.

As the scale of the network expands, the prohibitive communication overhead rapidly emerges as the primary bottleneck to the overall efficiency of the distributed algorithm. Communication compression serves as an effective approach to alleviate this bottleneck, which has been extensively studied in single-level optimization. For instance, the authors of~\cite{zhangjiaqi} develop an error compensation strategy to achieve linear convergence, while~\cite{Pushi} and~\cite{JOTA} establish a unified framework accommodating both relative and absolute compression errors. Furthermore, the authors of~\cite{Cao2023a} propose a distributed online gradient descent algorithm with communication compression  attaining an $O(\sqrt{T})$ regret bound. \cite{YuanTnnls2026} and \cite{KunTac2026} respectively extend compression mechanisms to the push-sum protocol and the bandit feedback setting. However, directly extending these compression schemes to online bilevel optimization is nontrivial. Since both the inner-level solution and the Hessian-inverse-vector product are tracked based on the current outer-level iterate, perturbations induced by compression in the outer-level iterate also affect these tracking processes. Consequently, the resulting compression errors propagate into the hypergradient approximation rather than remaining only in the consensus dynamics. It remains unclear how to design communication-efficient online bilevel optimization algorithms with communication compression.

\begin{table*}[htbp]
	\renewcommand{\arraystretch}{1.5} % 调整表格行高
	\caption{COMPARISON WITH RELATED CENTRALIZED ONLINE BILEVEL ALGORITHMS}
	\label{tab:comparison_centralized}
	\centering
	\resizebox{\textwidth}{!}{ 
		\begin{tabular}{c|c|c|c|c|c|c}
			\hline
			Reference & \makecell{Problem \\ type} & \makecell{Gradient\\ Queries} & \makecell{HvP \\ Complexity} & \makecell{Single-loop} & \makecell{Dynamic Regret Bound} & \makecell{Cumulative constraint \\ violation} \\ \hline
			OBBO ($w=1$)~\cite{OnlineBereNips33} & Centralized & $\mathcal{O}(T \log T)$ & $\mathcal{O}(T \log T)$ & No & $\mathcal{O}(1+V_T+H_{2,T})$ & Unconstrained \\ \hline
			SOBOW ($w=1$)\cite{Linetal2023}& Centralized & $\mathcal{O}(T)$ & $\mathcal{O}(T^2)$ & No & $\mathcal{O}(1+V_T+H_{2,T})$ & Unconstrained \\ \hline
			FSOBO~\cite{Onlinelocalregret} & Centralized & $\mathcal{O}(T)$ & $\mathcal{O}(T)$ & Yes & $\mathcal{O}(1+V_T+H_{2,T}+E_{2,T})$ & Unconstrained\\ \hline
			AOBO~\cite{Onlinelocalregret} & Centralized & $\mathcal{O}(T \log T
			+H_{2,T})$ & $\mathcal{O}(T \log T)$ & No & $\mathcal{O}(1 + V_T)$ & Unconstrained\\ \hline
			Algorithm~\ref{algo1} & Distributed & $\mathcal{O}(T)$ & $\mathcal{O}(T)$ & Yes & $\mathcal{O}(T^{\max\{\theta_1,1-\theta_1\}}+T^{\theta_1 } P_T+H_{1,T}+E_{1,T})$ & \makecell{$\mathcal{O}(T^{1-\theta_1/2})$ (w/o Slater$^{\dagger}$) \\ $\mathcal{O}(T^{1-\theta_1})$ (w/ Slater$^{\ddagger}$)} \\ \hline
			Algorithm~\ref{alg.2} & Distributed & $\mathcal{O}(T\log(\sqrt{T})+H_{1,T})$ & $\mathcal{O}(T \log T)$ & No & $\mathcal{O}(T^{\max\{\theta_1,1-\theta_1\}}+T^{\theta_1 } P_T)$ & \makecell{$\mathcal{O}(T^{1-\theta_1/2})$ (w/o Slater$^{\dagger}$) \\ $\mathcal{O}(T^{1-\theta_1})$ (w/ Slater$^{\ddagger}$)} \\ \hline
		\end{tabular}
	}
	
	% ================== 表注 ==================
	\vspace{4pt} 
	\raggedright
	\textbf{Notes:} 
	Gradient Queries and HvP Complexity denote the number of first-order gradient evaluations and HvP approximations required over the $T$-round horizon, respectively. 
	$^{\dagger}$Bound obtained without assuming Slater's condition. $^{\ddagger}$Bound obtained under Slater's condition. Here, $w$ denotes the window size, and $\theta_1 \in (0,1)$ is a tunable parameter. For $\varXi  \in \{1, 2\}$, $H_{\varXi ,T}$ denotes the inner-level path variation defined in (\ref{HH}); $V_T$ represents the functional environmental drift given by $V_T = \sum_{t=1}^{T-1} \sup_{x \in \mathcal{X}} |f_{t+1}(x) - f_t(x)|$; $E_{\varXi ,T}$ is the gradient path variation defined in~(\ref{AA12}). $P_T$ is the outer-level path variation defined in (\ref{path}).
\end{table*}

\begin{table*}[htbp]
	\renewcommand{\arraystretch}{1.5} % 调整表格行高
	\caption{COMPARISON WITH RELATED DISTRIBUTED ONLINE ALGORITHMS}
	\label{tab:comparison_distributed}
	\centering
	% 直接控制字号：8pt 为字体大小，10pt 为基础行距（行距通常设为字号的 1.2 倍）
	\fontsize{7pt}{8pt}\selectfont 
	\begin{tabular}{c|c|c|c|c|c}
		\hline
		Reference & \makecell{Problem \\ type} & \makecell{Compressed \\ communication} & \makecell{Slater's \\ condition} & \makecell{Static Regret Bound} & \makecell{Cumulative constraint \\ violation} \\ \hline
		\multirow{2}{*}{\cite{Xinlei2025}} & \multirow{2}{*}{Single-level} & \multirow{2}{*}{No} & No & \multirow{2}{*}{$\mathcal{O}(T^{\max\{\theta_1,1-\theta_1\}})$} & $\mathcal{O}(T^{1-\theta_1/2})$ \\ \cline{4-4} \cline{6-6}
		&  &  & Yes &  & $\mathcal{O}(T^{1-\theta_1})$ \\ \hline
		\multirow{2}{*}{\cite{kunCdc}} & \multirow{2}{*}{Single-level} & \multirow{2}{*}{Yes} & No & \multirow{2}{*}{$\mathcal{O}(T^{\max\{\theta_1,1-\theta_1\}})$} & $\mathcal{O}(T^{1-\theta_1/2})$ \\ \cline{4-4} \cline{6-6}
		&  &  & Yes &  & $\mathcal{O}(T^{1-\theta_1})$ \\ \hline
		\multirow{2}{*}{Algorithm~\ref{algo1}} & \multirow{2}{*}{Bilevel} & \multirow{2}{*}{Yes} & No & \multirow{2}{*}{\makecell{$\mathcal{O}(T^{\max\{\theta_1,1-\theta_1\}} +H_{1,T}+E_{1,T})$}} & $\mathcal{O}(T^{1-\theta_1/2})$ \\ \cline{4-4} \cline{6-6}
		&  &  & Yes &  & $\mathcal{O}(T^{1-\theta_1})$ \\ \hline
		\multirow{2}{*}{Algorithm~\ref{alg.2}} & \multirow{2}{*}{Bilevel} & \multirow{2}{*}{Yes} & No & \multirow{2}{*}{$\mathcal{O}(T^{\max\{\theta_1,1-\theta_1\}})$} & $\mathcal{O}(T^{1-\theta_1/2})$ \\ \cline{4-4} \cline{6-6}
		&  &  & Yes &  & $\mathcal{O}(T^{1-\theta_1})$ \\ \hline
	\end{tabular}
\end{table*}

In this paper, we consider the distributed online  bilevel optimization problem with a convex outer-level objective, strongly convex inner-level problems, and time-varying inequality constraints. The main contributions are outlined as follows.
\begin{enumerate}[(i)]
	\item   We propose two distributed online primal--dual algorithms with communication compression by combining inner-level solution tracking with iterative HvP approximation. Different from existing distributed bilevel algorithms~\cite{chen2023decentralized111,DisBilAuto2023penalty,niu2025lopa,niu2026hessian}, which are primarily developed for static environments, and centralized online bilevel algorithms~\cite{OnlienBI11,Onlinelocalregret,OnlineBereNips33,OnlineFirAcx}, our framework simultaneously accounts for distributed implementation, online variations, time-varying inequality constraints, and communication compression. In particular, the first algorithm retains a single-loop architecture with one inner-level update and one HvP tracking step per round, while the second algorithm utilizes an adaptive multi-loop tracking mechanism to eliminate the regret dependence on inner-level path variation. Compared with centralized online bilevel methods~\cite{OnlineBereNips33,Onlinelocalregret,Linetal2023}, the proposed framework additionally accounts for network disagreement, compressed communication, and time-varying inequality constraints. Compared with distributed constrained online methods~\cite{Xinlei2025,kunCdc}, the bilevel structure further requires tracking both the time-varying inner solution and the HvP approximation, whose errors are coupled with the distributed primal dynamics.

	\item We show in Theorem~\ref{thm1} that the single-loop algorithm establishes an $\mathcal{O}(T^{\max\{\theta_1,1-\theta_1\}}+T^{\theta_1 } P_T+H_{1,T}+E_{1,T})$ dynamic network regret bound and an $\mathcal{O}(T^{1-\theta_1/2})$ network cumulative constraint violation bound, which is dependent on the outer-level path variation $P_T$, the inner-level path variation $H_{1,T}$ and the gradient path variation $E_{1,T}$.  Moreover, the network cumulative constraint violation bound is further reduced to $\mathcal{O}(T^{1-\theta_1})$ under Slater's condition.  The tightening is achieved by retaining the original constraints in the regularized Lagrangian, thereby preserving the negative feasibility margin supplied by the common Slater point. To the best of our knowledge, this is the first work to achieve such an improvement for distributed online bilevel optimization.

	\item We show in Theorem~\ref{thm2} that the adaptive inner-loop algorithm establishes an improved $\mathcal{O}(T^{\max\{\theta_1,1-\theta_1\}}+T^{\theta_1 }P_T)$ dynamic network regret bound which depends solely on the outer-level path variation. The resulting regret bound no longer depends explicitly on the inner-level and gradient path variations \(H_{1,T}\) and \(E_{1,T}\).  Different from~\cite{OnlienBI11,OnlineBereNips33} that utilize a prefixed number of inner gradient steps, our adaptive algorithm dynamically adjusts the number of inner-level updates based on a gradient-norm tolerance condition.  Moreover, the proposed adaptive algorithm achieves an $\mathcal{O}(T^{1-\theta_1/2})$ network cumulative constraint violation bound, and this bound is reduced to $\mathcal{O}(T^{1-\theta_1})$ under Slater's condition.
\end{enumerate}

To highlight our theoretical advantages, we compare our results with centralized online bilevel algorithms and  distributed online algorithms in TABLE~\ref{tab:comparison_centralized} and  TABLE~\ref{tab:comparison_distributed}, respectively.

The remainder of this paper is organized as follows. Section~\ref{sec2} introduces preliminaries and formulates the problem. Section~\ref{sec3} proposes the single-loop distributed compressed online primal--dual algorithm.  Section~\ref{sec4} develops the adaptive multi-loop distributed compressed online primal--dual algorithm. Section~\ref{sec5} provides numerical simulations to verify the theoretical findings. Finally, Section~\ref{sec6} concludes this paper.

\textsl{Notations:} The sets of real numbers and $n$-dimensional real column vectors are denoted by $\mathbb{R}$ and $\mathbb{R}^n$, respectively. $\mathbb{R}_+^n$ stands for the set of $n$-dimensional nonnegative vectors. $\mathbb{N}_+$ denotes the set of all positive integers. Given a positive integer $N$, we define the index set $[N] = \{1, 2, \ldots, N\}$. For vectors $x_1, \ldots, x_N$, their vertical concatenation is defined as $\text{col}(x_1, \ldots, x_N) = [x_1^T, \ldots, x_N^T]^T$.  $\|\cdot\|$ represents the standard Euclidean norm for vectors. $\|\cdot\|_1$ denotes the $L_1$-norm.  For a closed set $\mathcal{X}$, $\mathcal{P}_{\mathcal{X}}(x)$ denotes the standard Euclidean projection of $x$ onto $\mathcal{X}$. $[\cdot]_+$ denotes the Euclidean projection onto the nonnegative orthant. $\otimes$ represents the Kronecker product. The notation $\mathbb{E}[\cdot]$ stands for the expectation operator. For a matrix $W$, $[W]_{ij}$ denotes the matrix entry in the $i$-th row and $j$-th column. For two scalar sequences $\{\alpha_t\}$ and $\{\beta_t\}$, the notation $\alpha_t = \mathcal{O}(\beta_t)$ indicates the existence of a constant $a > 0$ such that $\alpha_t \leq a \beta_t$ holds for all $t$. $\mathcal{U}(a, b)$ denotes the uniform distribution over the interval $[a, b]$ with $a < b \in \mathbb{R}$. For any scalar $a \in \mathbb{R}$, $\lfloor a \rfloor$ denotes the floor function, outputting the greatest integer less than or equal to $a$.   $\mathbf{I}$ is the identity matrix and $\mathbf{1}_{p \times q}$ denotes the matrix of all ones. $\mathbf{0}_n$ and $\mathbf{1}_n$ denote the $n$-dimensional zero vector and all-one vector, respectively.

\section{Preliminaries and Problem Formulation}\label{sec2}
 In this section, we first introduce essential preliminary concepts regarding algebraic graph theory and Bregman divergence. Subsequently, we formalize the problem of distributed online  bilevel optimization subject to time-varying inequality constraints, alongside the corresponding standard assumptions.
 
\subsection{Graph Theory}
The communication topology among the $N$ agents is modeled by a directed graph $\mathcal{G}=([N], \mathcal{E})$, where $\mathcal{E}\subseteq[N]\times[N]$ denotes the set of edges. 
An edge $(j, i) \in \mathcal{E}$ indicates that agent $i$ can receive information from agent $j$. The in-neighbor set of agent  $i$ is defined as  $\mathcal{N}_i^{\text{in}}(\mathcal{G}) = \{j \in [N] \mid (j, i) \in \mathcal{E}\}$.  
A directed path from node $i_1$ to node $i_k$ is defined as a sequence of edges $(i_1, i_2), (i_2, i_3), \ldots, (i_{k-1}, i_k)$ belonging to $\mathcal{E}$. 
Let $W \in \mathbb{R}^{N \times N}$ be the mixing matrix associated with $\mathcal{G}$, where the element $[W]_{ij}>0$ if $(j, i) \in \mathcal{E}$ or $i=j$, and $[W]_{ij}=0$ otherwise. 
The matrix $W$ is said to be doubly stochastic if $W\mathbf{1}_N=W^T\mathbf{1}_N=\mathbf{1}_N$.  
\begin{assumption}\label{ass:network_fixed}
	The communication graph $\mathcal{G} = ([N], \mathcal{E})$ and its associated mixing matrix $W \in \mathbb{R}^{N \times N}$ satisfy the following conditions:
	\begin{enumerate}
		\item[(i)] The directed graph $\mathcal{G}$ is strongly connected.
		\item[(ii)] The mixing matrix $W$ is doubly stochastic.
		\item[(iii)] There exists a constant $\underline{w} \in (0, 1)$ such that $[W]_{ii} \ge \underline{w}$ for all $i \in [N]$, and $[W]_{ij} \ge \underline{w}$ for all $(j, i) \in \mathcal{E}$.
	\end{enumerate}
\end{assumption}

\subsection{Bregman Divergence}
For the sake of generality, this paper uses the Bregman divergence~\cite{Xinlei2025} to measure the distance of two points $x, y$ in a nonempty closed convex set $\mathcal{X} \subseteq \mathbb{R}^q$, which is defined as
$$
\textstyle  \mathcal{D}_{\psi}(x, y) = \psi(x) - \psi(y) - \langle \nabla \psi(y), x - y \rangle,
$$
where $\psi: \mathcal{X} \to \mathbb{R}$ is a continuously differentiable and $\eta$-strongly convex distance-generating function with constant $\eta > 0$. 

By construction, the function $\mathcal{D}_{\psi}(\cdot, y)$ inherits the $\eta$-strong convexity of $\psi$ for any given $y \in \mathcal{X}$. Consequently, the divergence satisfies the following uniform lower bound
\begin{equation}\label{strong}
\textstyle  \mathcal{D}_{\psi}(x, y) \geq \frac{\eta}{2} \|x - y\|^2, \quad \forall x, y \in \mathcal{X}.
\end{equation}

A fundamental property leveraged in our subsequent analysis is the three-point identity. Specifically, for any $x, y, z \in \mathcal{X}$, the following equality holds
\begin{equation}\label{three}
\textstyle  \langle y - x, \nabla \psi(z) - \nabla \psi(y) \rangle = \mathcal{D}_{\psi}(x, z) - \mathcal{D}_{\psi}(x, y) - \mathcal{D}_{\psi}(y, z).
\end{equation}

\subsection{Problem Formulation}
Consider a network of $N$ agents. Given a time horizon $T \in \mathbb{N}_+$,  at each round $t \in [T]$, the agents collaborate to minimize a global time-varying outer-level function $f_t(x)$ subject to local time-varying inequality constraints. In this hierarchical paradigm, $f_t(x)$ depends on the optimal solutions of personalized inner-level problems. Formally, the problem is formulated as
\begin{align}\label{Prob}
	\textstyle  \min_{x \in \mathcal{X}}  \; &  \textstyle  f_t(x) = \frac{1}{N} \sum_{i=1}^{N} f_{i,t}\big(x, y_{i,t}^*(x)\big) \nonumber\\
	\text{s.t.} \; &\textstyle   y_{i,t}^*(x) \in \arg\min_{y \in \mathbb{R}^p} g_{i,t}(x, y), \\
	&\textstyle   h_{i,t}(x) \leq \mathbf{0}_{m_i}, \quad \forall i \in [N], \nonumber
\end{align}
where $x \in \mathcal{X} \subset \mathbb{R}^q$ is the global decision variable constrained to a common compact convex set $\mathcal{X}$, and $y \in \mathbb{R}^p$ is the inner-level variable. For each agent $i$, $f_{i,t}: \mathbb{R}^q \times \mathbb{R}^p \rightarrow \mathbb{R}$ is the outer-level cost, $g_{i,t}: \mathbb{R}^q \times \mathbb{R}^p \rightarrow \mathbb{R}$ is the inner-level cost, and $h_{i,t}: \mathbb{R}^q \rightarrow \mathbb{R}^{m_i}$ represents the local inequality constraint. We denote the stacked constraint vector as $h_t(x) = \text{col}(h_{1,t}(x), \dots, h_{N,t}(x)) \in \mathbb{R}^m$, where $m = \sum_{i=1}^N m_i$. For each $t \in [T]$, define the instantaneous feasible set $\mathcal{X}_t = \{ x \in \mathcal{X} \mid h_t(x) \le 0_m \}$, and define $\Omega = \bigcap_{t=1}^T \mathcal{X}_t = \{ x \in \mathcal{X} \mid h_t(x) \le 0_m, \forall t \in [T] \}$. The set $\Omega$ is assumed to be nonempty.

In the considered online setting, at the beginning of each round $t$, agent $i$ selects a local decision $x_{i,t}\in\mathcal X$ using only the information available up to round $t-1$. After the decision is made, the current local functions $f_{i,t}$, $g_{i,t}$, and $h_{i,t}$, together with the derivative information become available to agent $i$.

A representative application of (\ref{Prob}) is hierarchical resource allocation over communication networks. In a convex--strongly-convex bilevel formulation, the lower level determines users' resource or transmission-rate allocations under a given upper-level control parameter, whereas the outer level adjusts this parameter to minimize the true aggregate network cost~\cite{ji2023lower}. In an online distributed environment, local traffic loads, channel conditions, and capacity limits may vary over time, which can be modeled by the time-varying functions $f_{i,t}$, $g_{i,t}$, and constraints $h_{i,t}$ in (\ref{Prob}).

The primary goal of agents is to design the decision sequences $\{x_{i,t}\}$ for $i \in [N]$ and $t \in [T]$ to ensure sublinear growth of the network regret and the network cumulative constraint violation. Specifically, the dynamic network regret is formally defined as
\begin{align}
\textstyle 	\text{Net-Reg}(T) = \frac{1}{N} \sum_{i=1}^{N} \sum_{t=1}^{T} \left[ f_t(x_{i,t}) - f_t(x_t^*) \right],
\end{align}
where $x_t^* \in \arg\min_{x \in \mathcal{X}_t} {1}/{N} \sum_{i=1}^N f_{i,t} (x, y_{i,t}^*(x))$ denotes an optimal solution of problem (\ref{Prob}) at time  $t$. This metric compares the local decisions generated by the network with the sequence of instantaneous constrained optimal solutions $\{x_t^*\}_{t=1}^T$. Hence, it quantifies the ability of the distributed algorithm to track the time-varying outer-level optimum.

Furthermore,  we also evaluate the static network regret. Unlike its dynamic counterpart, static regret measures the performance gap against the best fixed decision in hindsight. 
Let $x^* \in \arg\min_{x \in \Omega} {1}/{N} \sum_{t=1}^T \sum_{i=1}^N f_{i,t} (x, y_{i,t}^*(x))$. The static network regret is defined as
\begin{align}
	&\textstyle  \text{Net-SReg}(T) = \frac{1}{N} \sum_{i=1}^{N} \sum_{t=1}^{T} \left[ f_t(x_{i,t}) - f_t(x^*) \right],
\end{align}
 which compares the generated decisions with the best fixed feasible decision over the entire horizon.

Additionally, the network cumulative constraint violation is given by
\begin{equation}
\textstyle 	\text{Net-CCV}(T) = \frac{1}{N} \sum_{i=1}^N \sum_{t=1}^T \big\| [h_t(x_{i,t})]_+ \big\|.
\end{equation}
This metric evaluates each agent's local decision against the global constraint  $h_t$, independently penalizing instantaneous violations at each round.

The nested structure of (\ref{Prob}) couples the outer-level objective with the inner-level optimal solution, which complicates gradient evaluation. If $g_{i,t}(x,\cdot)$ is $\mu_g$-strongly convex and twice continuously differentiable, then $y_{i,t}^*(x)$ is unique and
	$$ \textstyle  \nabla_{yy}^2g_{i,t}\bigl(x,y_{i,t}^*(x)\bigr) \succeq \mu_g I, $$
	so the inner-level Hessian is nonsingular. Applying the implicit function theorem~\cite{niu2025lopa} to the first-order optimality condition
	$$ \textstyle  \nabla_y g_{i,t}\bigl(x,y_{i,t}^*(x)\bigr)=0, $$
	the exact local hypergradient  of the composite outer-level function with respect to $x$ can be written as
	\begin{align}\label{eq:hypergradient}
		&\textstyle  \nabla f_{i,t}(x_{i,t},y_{i,t}^*(x_{i,t})) = \nabla_x f_{i,t}(x_{i,t}, y_{i,t}^*(x_{i,t})) \nonumber \\
		&\qquad \textstyle  + \nabla y_{i,t}^*(x_{i,t}) \nabla_y f_{i,t}(x_{i,t}, y_{i,t}^*(x_{i,t})), 
	\end{align}
	where the matrix $\nabla y_{i,t}^*(x_{i,t})$ is given by
	\begin{equation}
		\begin{aligned}\label{y_star}
			&\textstyle  \nabla y_{i,t}^*(x_{i,t}) \\
			&= \textstyle  -\nabla_{xy}^2 g_{i,t}(x_{i,t}, y_{i,t}^*(x_{i,t})) \left[ \nabla_{yy}^2 g_{i,t}(x_{i,t}, y_{i,t}^*(x_{i,t})) \right]^{-1}.
		\end{aligned}
\end{equation}
This composite structure explicitly reveals a computational bottleneck evaluating $\nabla f_{i,t}(x_{i,t}, y_{i,t}^*)$ requires acquiring the exact inner-level optimum $y_{i,t}^*(x_{i,t})$ and explicitly inverting the high-dimensional inner-level Hessian inverse matrix $\left[ \nabla_{yy}^2 g_{i,t}(x_{i,t}, y_{i,t}^*(x_{i,t})) \right]^{-1}$.

To characterize the temporal non-stationarity that enters the regret analysis, similar to~\cite{Onlinelocalregret,OnlineFirAcx}, we introduce variation measures for the outer-level optimal solution, the inner-level optimal solution, and the gradients involved in the Hessian-inverse-vector product (HvP) approximation. Specifically, the outer-level path variation is defined as
\begin{equation}\label{path}
	\textstyle P_T = \sum_{t=1}^{T-1} \|x_{t+1}^* - x_t^*\|. 
\end{equation}

For $ \Xi \in \{1, 2\}$, the inner-level path variation is defined as
\begin{equation}\label{HH}
\textstyle 	H_{\varXi ,T} = \sum_{t=1}^{T-1} \sup_{x \in \mathcal{X}, i \in [N]} \|y_{i,t+1}^*(x) - y_{i,t}^*(x)\|^\varXi.
\end{equation}

To circumvent the exact Hessian inverse matrices, our algorithms need to track the HvP $ \left[ \nabla_{yy}^2 g_{i,t}(x_{i,t}, y_{i,t}^*(x_{i,t})) \right]^{-1} \nabla_y f_{i,t}(x_{i,t}, y_{i,t}^*(x_{i,t}))$. To bound the tracking error, we define the gradient path variation 
\begin{align}\label{AA12}
\textstyle  E_{\varXi ,T} = E_{\varXi ,yy,T}^g + E_{\varXi ,y,T}^f, \quad \varXi \in \{1,2\},
\end{align}
 where
\begin{align*}
&\textstyle E_{\varXi ,yy,T}^g \nonumber\\
&\textstyle  = \sum_{t=1}^{T-1} \sup_{\substack{x \in \mathcal{X}, y \in \mathbb{R}^p \\ i \in [N]}} \|\nabla_{yy}^2 g_{i,t+1}(x,y) - \nabla_{yy}^2 g_{i,t}(x,y)\|^\varXi , \\
&\textstyle E_{\varXi ,y,T}^f \nonumber\\
&\textstyle  = \sum_{t=1}^{T-1} \sup_{\substack{x \in \mathcal{X}, y \in \mathbb{R}^p \\ i \in [N]}} \|\nabla_y f_{i,t+1}(x,y) - \nabla_y f_{i,t}(x,y)\|^\varXi .
\end{align*}

To proceed, we impose the following standard assumptions, which are mild and widely adopted in the literature~\cite{niu2025lopa,Xinlei2025,kunCdc}.
\begin{assumption} \label{ass:outer}
	For any $t \in \mathbb{N}_+$, the following conditions hold 
	\begin{enumerate}
		\item[(i)] For each $i\in [N]$, $f_{i,t}(x,y)$ is jointly continuously differentiable with respect to $(x,y)$.
		\item[(ii)] $\nabla_x f_{i,t}(x,y)$ and $\nabla_y f_{i,t}(x,y)$ are uniformly bounded by positive constants $C_{f,x}$ and $C_{f,y}$, respectively.
		\item[(iii)] $\nabla_x f_{i,t}(x,y)$ and $\nabla_y f_{i,t}(x,y)$ are Lipschitz continuous with respect to $(x,y)$ with positive constants $L_{f,x}$ and $L_{f,y}$, respectively.
			\item[(iv)] The outer-level function  $f_t : \mathcal{X} \to \mathbb{R} $ defined in (\ref{Prob}) is convex on $ \mathcal{X}$.
	\end{enumerate}
\end{assumption}

\begin{assumption}\label{ass:inner}
	For each $i \in [N]$ and any $t \in \mathbb{N}_+$, the inner-level cost function $g_{i,t}: \mathbb{R}^q \times \mathbb{R}^p \to \mathbb{R}$ satisfies
	\begin{enumerate}
		\item[(i)] $g_{i,t}(x, y)$ is twice continuously differentiable with respect to $(x,y)$, and for any fixed $ x \in \mathbb{R}^q, g_{i,t}(x, \cdot)$  is $\mu_g$-strongly convex with respect to $y$.
		\item[(ii)] The gradient $\nabla_y g_{i,t}(x,y)$ is Lipschitz continuous with respect to $y$ with a positive constant $L_{g,y}$. Furthermore,  $\nabla_{xy}^2 g_{i,t}(x,y)$ and $\nabla_{yy}^2 g_{i,t}(x,y)$ are Lipschitz continuous in $(x,y)$ with positive constants $L_{g,xy}$ and $L_{g,yy}$, respectively.
		\item[(iii)] $\nabla_{xy}^2 g_{i,t}(x,y)$ is uniformly bounded by a positive constant $C_{g,xy}$.
	\end{enumerate}
\end{assumption}

\begin{assumption}\label{ass:constraint}
	For each $i \in [N]$ and any $t \in \mathbb{N}_+$, the local constraint function $h_{i,t}: \mathbb{R}^q \to \mathbb{R}^{m_i}$ satisfies
	\begin{enumerate}
		\item[(i)] Each component $[h_{i,t}(x)]_l$, $l \in [m_i]$, is convex over $\mathbb{R}^q$.
		\item[(ii)] $\nabla h_{i,t}(x)$ is uniformly bounded by a positive constant $G_h$.
	\end{enumerate}
\end{assumption}

\begin{assumption}\label{ass.mir}
For any fixed $x \in \mathcal{X}$, the Bregman divergence $\mathcal{D}_{\psi}(x, \cdot): \mathcal{X} \to \mathbb{R}$ is convex with respect to its second argument. And there exists a constant  $L_{\psi} > 0$ such that, $\| \nabla \psi(x_1) - \nabla \psi(x_2) \| \leqslant L_{\psi} \| x_1 - x_2 \|$, $\forall x_1, x_2 \in \mathcal{X}$. Furthermore,  there exists a constant $K > 0$ such that
	\begin{equation}\label{Klip}
		\textstyle |\mathcal{D}_{\psi}(x_1, y) - \mathcal{D}_{\psi}(x_2, y)| \leq K \|x_1 - x_2\|, \forall x_1, x_2, y \in \mathcal{X}.
	\end{equation}
\end{assumption}

Since $\mathcal{X}$ is compact, its diameter $d(\mathcal{X}) = \sup_{x,y \in \mathcal{X}} \Vert{}x-y\Vert{}$ is finite. By (\ref{Klip}) and $\mathcal{D}_{\psi}(y, y)=0$, it holds that
\begin{equation}\label{KKbound}
	\mathcal{D}_{\psi}(x, y) \leq K d(\mathcal{X}), \quad \forall x, y \in \mathcal{X}.
\end{equation}

\begin{assumption}\label{ass.slater}
	There exists a strictly feasible point $\bar{x} \in \mathcal{X}$ and a constant $\varsigma_s > 0$, such that for each agent $i \in [N]$ and all $t \in [T]$, it holds
	\begin{equation}
	\textstyle 	h_{i,t}(\bar{x}) \le -\varsigma_s \mathbf{1}_{m_i}.
	\end{equation}
\end{assumption}

To reduce the communication overhead, we introduce the  communication compression mechanism. By broadcasting compressed and  lower-bit representations instead of exact real-valued vectors, the communication burden can be significantly alleviated. We adopt a class of compression operators, which is characterized by the following assumption.
\begin{assumption}\label{ass.cpmpress}
	The compressor $\mathcal{C}:\mathbb{R}^{q}\rightarrow \mathbb{R}^{q}$ satisfies
	\begin{equation}
	\textstyle 	\mathbb{E}_{\mathcal{C}}\left[\left\|{\mathcal{C}(x)}-x\right\|^{2}\right]\le C,\quad \forall x\in\mathbb{R}^{q}, \label{eq:compressor} 
	\end{equation}
	where constant $C\ge0$ and  $\mathbb{E}_{\mathcal{C}}[\cdot]$ denotes the expectation with respect to the inherent randomness of the compressor. 
\end{assumption}

\begin{remark}
The problem formulation in (\ref{Prob}) offers a unified framework that encapsulates conventional optimization settings. Notably, it reduces to standard online constrained optimization~\cite{Cao2023a,Xinlei2025,kunCdc} when $f_{i,t}(x,y)$ is independent of $y$, and reduces to an online minimax problem~\cite{WentaoTac,WentaoAuto,OnlineSa} when $f_{i,t}(x,y)=-g_{i,t}(x,y)$. 
Evaluating the exact hypergradient in (\ref{Prob}) is a general computational bottleneck in bilevel optimization, since it requires both the exact inner-level solution \(y_{i,t}^\star(x)\) and the Hessian inverse $[\nabla_{yy}^2 g_{i,t}(x,y)]^{-1}$. In the distributed online setting considered here, an additional difficulty arises because each agent evaluates its hypergradient at a local time-varying iterate \(x_{i,t}\). Consequently, network disagreement and compression errors perturb the evaluation points of both the inner-solution and HvP trackers, and these perturbations further propagate into the outer-level primal--dual dynamics.
\end{remark} %%%%%%%%%%%%%%%%%%%%%%%%%%%%%%%%%

\section{Single-loop Distributed Compressed Online Primal--Dual algorithm}\label{sec3}
In this section, the Single-loop Distributed Compressed Online Primal--Dual (SDCO-PD) algorithm is designed for online bilevel optimization problem~(\ref{Prob}). Moreover, the performance of the algorithm is analyzed without and with Slater's condition, respectively.

\subsection{Algorithm Description}
To motivate the distributed design, we consider a saddle-point formulation. Recall that at the $t$-th round, the global cost function and constraint function $h_t$ are given in (\ref{Prob}). The associated regularized Lagrangian function is defined as
\begin{equation}
	\textstyle \mathcal{L}_t(x_t, \varphi_t) = \frac{1}{N} \sum_{i=1}^N f_{i,t}(x_t, y_{i,t}^*) + \varphi_t^T h_t(x_t) - \frac{1}{2\gamma_t} \|\varphi_t\|^2, \label{eq:lagrangian}
\end{equation}
where $\varphi_t \in \mathbb{R}_+^m$ is the Lagrange multiplier associated with the time-varying inequality constraint and $\gamma_t$ is the regularization parameter.  In a centralized setting, an online primal--dual projected gradient algorithm can be given by
\begin{equation}\label{Sanjia}
	\begin{cases}
		x_{t+1} = P_{\mathcal{X}}(x_t - \alpha_t \nabla_x L_t(x_t, \varphi_t)) \\
		\varphi_{t+1} = [\varphi_t + \gamma_t \nabla_\varphi L_t(x_t, \varphi_t)]_+,
	\end{cases} 
\end{equation}
where $\alpha_t > 0$ is the primal step size and $\nabla_x L_t(x_t, \varphi_t) = {1}/{N} \sum_{i=1}^N \nabla f_{i,t}(x_{t},y_{i,t}^*) + \nabla h_t(x_t)^T \varphi_t$. However, evaluating the exact global hypergradient requires the aggregation of all agents' local information, which is inaccessible locally. It is imperative to extend the centralized algorithm to a distributed setting.

Inspired by~\cite{Xinlei2025}, by directly maximizing the regularized Lagrangian, each agent updates its local dual variable as
\begin{equation}
	\varphi_{i,t + 1} = \gamma _t [h_{i,t}( x_{i,t} )]_ +. \label{Algorithm1-eq4}
\end{equation}
 With the updated dual variable, the pre-consensus variable $z_{i,t+1}$ as a local estimate of $x_{t+1}$ is assigned to agent $i$. Instead of using the projected gradient descent in (\ref{Sanjia}), $z_{i,t+1}$ is updated via a mirror descent step
\begin{equation}\label{Algorithm1-eq6}
	z_{i,t+1} = \arg\min_{x \in \mathcal{X}} \left\{ \alpha_t \langle x, \omega_{i,t+1} \rangle + \mathcal{D}_{\psi}(x, x_{i,t}) \right\}, 
\end{equation}
where
\begin{equation}
	{{\omega} _{i,t + 1}} = \tilde{\nabla}f_{i,t}(x_{i,t},y_{i,t+1},v_{i,t+1}) + {\nabla} {{h_{i,t}}({x_{i,t}}) }^{T}{\varphi_{i,t+1}}. \label{Algorithm1-eq5}
\end{equation}
Here, $\tilde{\nabla}f_{i,t}(x_{i,t},y_{i,t+1},v_{i,t+1})$ is an approximated local hypergradient that will be detailed shortly to resolve the aforementioned hypergradient evaluation challenge.

To reduce the communication overhead, we employ a communication compression mechanism. Agent $i$ maintains the auxiliary variable $\hat{z}_{i,t}$, which serves as the local estimate of its own previously compressed pre-consensus variable $z_{i,t}$. At time $t$, agent $j$ compresses the scaled innovation between $z_{j,t}$ and $\hat{z}_{j,t-1}$ using the compressor $\mathcal{C}(\cdot)$ prior to broadcasting. Upon receiving the compressed messages from neighbors, agent $i$ updates the auxiliary variables via
\begin{flalign}\label{Algorithm1-eq1}
\textstyle 	{\hat{z}_{j,t}} = & \textstyle  \mathcal{P}_{\mathcal{X}}( {\hat{z}_{j,t - 1}} + {s_t}\mathcal{C}\big( {( {{z_{j,t}} - {\hat{z}_{j,t-1}}} )/{s_{t}}} \big)), \nonumber\\
	&\textstyle  j \in \big\{\mathcal{N}_i^{\text{in}}( {{\mathcal{G}}} ) \cup \{i\}\big\}. 
\end{flalign}

Subsequently, the local primal decision variable $x_{i,t}$ is constructed by 
\begin{align}\label{Algorithm1-eq3}
\textstyle  	{x_{i,t}} = \sum_{j = 1}^N {{{[ {{W}} ]}_{ij}}{{\hat z}_{j,t}}}. 
\end{align}

It should be noted that we employ a dynamic scaling technique to mitigate the absolute compression error in~(\ref{Algorithm1-eq1}). By Assumption~\ref{ass.cpmpress} and the non-expansiveness of the Euclidean projection, the projected compression error remains bounded by $s_t^2C$ in mean square. As $s_t$ decays to zero, the compression error therefore vanishes accordingly.

Bilevel optimization differs from single-level optimization in that outer-level functions depend on the optimal solutions of inner-level problems. 
Directly implementing the exact hypergradient $\nabla f_{i,t}(x_{i,t},y_{i,t}^*)$ given in \eqref{eq:hypergradient} is computationally prohibitive because evaluating the hypergradient requires the exact inner-level solution $y_{i,t}^*(x_{i,t})$ and the explicit Hessian inverse matrices, both necessitating expensive iterative subroutines.

To address this issue and alleviate the computation cost, we adopt two one-step approximation strategies to construct the surrogate hypergradient $\tilde{\nabla}f_{i,t}(x_{i,t},y_{i,t+1},v_{i,t+1})$. First,  a one-step gradient descent is performed to approximate the inner-level solution. We substitute the exact $y_{i,t}^*$ with a tracking variable $y_{i,t}$, reusing its previous value to warm-start the tracking for a better approximation
\begin{flalign}\label{estY}
	\textstyle  y_{i,t+1} = y_{i,t} - \beta \nabla_y g_{i,t}(x_{i,t}, y_{i,t}),
\end{flalign}
where $ \beta > 0 $ is the inner-level stepsize.

Second, we aim to approximate the Hessian inverse matrices. Define  $v_{i,t}(x,y) = [\nabla_{yy}^2 g_{i,t}(x,y)]^{-1} \nabla_y f_{i,t}(x,y)$, and let  $v_{i,t}^*(x) = v_{i,t}(x, y_{i,t}^*(x))$. The exact HvP required in (\ref{eq:hypergradient}) is therefore $v_{i,t}^*(x_{i,t})$, which  can be naturally formulated as the optimal solution to a strongly convex quadratic programming problem
\begin{equation}
	\label{eq:qp_problem}
	\begin{split}
		\textstyle \arg\min_{{v}} \Big\{ &\textstyle   \frac{1}{2} {v}^T \nabla_{yy}^2 g_{i,t}(x_{i,t}, y_{i,t}^*(x_{i,t})) {v} \\
		&\textstyle  - \nabla_y f_{i,t}(x_{i,t}, y_{i,t}^*(x_{i,t}))^T {v} \Big\}.
	\end{split}
\end{equation}
In order to avoid producing another computation loop when solving \eqref{eq:qp_problem}, we update the HvP tracking variable only once at each iteration, warm-started using its value at the previous iteration via a single gradient step
\begin{align}\label{eq:w_update}
	w_{i,t+1} = &~v_{i,t} - \lambda ( \nabla_{yy}^2 g_{i,t}(x_{i,t}, y_{i,t+1}) v_{i,t}\nonumber\\
	& \quad- \nabla_y f_{i,t}(x_{i,t}, y_{i,t+1}) ),
\end{align}
where $\lambda > 0$ is the HvP-tracking stepsize.

To maintain the boundedness of $w_{i,t}$, we employ an auxiliary projection onto the closed Euclidean ball with fixed radius $r_v \ge  C_{f,y}/\mu_g$ via
\begin{align}\label{eq:v_update}
	v_{i,t+1} &= 
	\begin{cases}
		w_{i,t+1}, & \|w_{i,t+1}\| \le r_v, \\
		\dfrac{r_v w_{i,t+1}}{\|w_{i,t+1}\|}, & \text{otherwise}.
	\end{cases}
\end{align}
This geometry-constrained projection fundamentally bounds the tracking magnitude, enforcing the uniform boundedness of the hypergradient. As a result, the approximated local hypergradient is  assembled as
\begin{align}\label{HuGradi}
	&\tilde{\nabla}f_{i,t}(x_{i,t},y_{i,t+1},v_{i,t+1}) = \nabla_x f_{i,t}(x_{i,t}, y_{i,t+1}) \nonumber \\
	&\qquad - \nabla_{xy}^2 g_{i,t}(x_{i,t}, y_{i,t+1}) v_{i,t+1}.
\end{align}
The main technical difficulty in the proposed framework arises from the interaction among the distributed primal dynamics, communication compression, inner level tracking, and HvP approximation. In particular, the distributed and compressed primal updates induce time varying variations in \(\|x_{i,t}-x_{i,t-1}\|\), which enter both the inner level tracking recursion and the HvP tracking recursion, as shown in (\ref{lemmYY}) and (\ref{eq:lem_v_bound_final}). These tracking errors further propagate into the hypergradient approximation and subsequently affect the outer level primal dual update. As a result, the consensus error, compression error, and bilevel approximation error are intrinsically coupled throughout the algorithmic dynamics. To address this difficulty, our analysis establishes a unified recursive error bound that jointly controls these coupled terms and leads to the dynamic regret and cumulative constraint violation guarantees of the proposed single loop algorithm.

By grouping the above updates, we get the SDCO-PD algorithm as summarized in  Algorithm~\ref{algo1}, which avoids nested computational loops and explicit matrix inversions, making it suitable for online distributed implementation.

\begin{algorithm}[!t] 
	\caption{Single-loop Distributed Compressed Online Primal--Dual (SDCO-PD) algorithm} 
	\begin{algorithmic}\label{algo1}
		\renewcommand{\algorithmicrequire}{\textbf{Input:}}
		\REQUIRE
		Non-increasing $\{ {\alpha _t}\},  \{ {{s _t}} \} \subseteq ( {0, + \infty })$, non-decreasing $\{ {\gamma _t}\} \subseteq ( {0, + \infty })$, step sizes $\beta, \lambda > 0$, and radius $r_v\ge C_{f,y}/\mu_g$.
		\renewcommand{\algorithmicrequire}{\textbf{Initialize:}}
		\REQUIRE
		${\hat{z}_{j,0}} = {\mathbf{0}_q}$ for $j \in [ N ]$, and ${z_{i,1}} \in {\mathcal{X}}$, $y_{i,1} = 0_p$, $v_{i,1} = 0_p$, $\varphi_{i,1} = 0_{m_i}$, $\forall i \in [ N ]$.
		\FOR {$t = 1, \cdot  \cdot  \cdot $}
		\FOR {$i = 1,\cdot  \cdot  \cdot, N$ in parallel}
		\STATE Update the projected compressed variable  ${\hat{z}_{j,t}}$ via \eqref{Algorithm1-eq1}.
		\STATE Select the local decision variable ${x_{i,t}}$ via \eqref{Algorithm1-eq3}.
		\STATE Observe the current local functions \(f_{i,t}\), \(g_{i,t}\), and \(h_{i,t}\), and access the required first- and second-order derivative oracles at the queried points. 
		\STATE Update the local dual variable ${\varphi_{i,t + 1}}$ via \eqref{Algorithm1-eq4}.
		\STATE Update the inner-level variable $y_{i,t+1}$ via \eqref{estY}.
		\STATE Update the HvP tracking variable $w_{i,t+1}$ via \eqref{eq:w_update}.
		\STATE Update the estimate $v_{i,t+1}$ via \eqref{eq:v_update}.
		\STATE Approximate the hypergradient $\tilde{\nabla}f_{i,t}(x_{i,t},y_{i,t+1},v_{i,t+1})$ via \eqref{HuGradi}.
		\STATE Update the gradient tracking direction ${{\omega} _{i,t + 1}}$ via \eqref{Algorithm1-eq5}.
		\STATE Update the pre-consensus primal variable $z_{i,t+1}$ via \eqref{Algorithm1-eq6}. 
		\ENDFOR
		\ENDFOR
		\renewcommand{\algorithmicensure}{\textbf{Output:}}
		\ENSURE
		$\{ x_{i,t} \}$.
	\end{algorithmic}
\end{algorithm}

\subsection{Performance Analysis}
In this section, we establish the theoretical guarantees of the SDCO-PD algorithm. 
\begin{theorem}\label{thm1}
	Suppose Assumptions~\ref{ass:network_fixed}--\ref{ass.mir} and \ref{ass.cpmpress} hold. For all $i \in [N]$, let $\{x_{i,t}\}$ be the sequences generated by Algorithm~\ref{algo1} with parameters $\beta \leq {2}/{(\mu_g + L_{g,y})}$, $\lambda \leq {1}/{L_{g,y}}$, and
	\begin{equation}\label{canshu}
		\alpha_t = \frac{\alpha_0}{t^{\theta_1}}, \quad \gamma_t = \frac{\gamma_0}{\alpha_t}, \quad s_t=\frac{s_0}{t^{\theta_2}}, \quad \forall t \in \mathbb{N}_+, 
	\end{equation}
	where $\alpha_0, s_0 > 0$, $\theta_1 \in (0, 1)$, $\theta_2=\theta_1+1$, and $\gamma_0 \in (0, \eta/(4G_h^2)]$ are constants. Then, for any $T \in \mathbb{N}_+$, the dynamic network regret and network cumulative constraint violation satisfy
	\begin{align}
		&\mathbb{E}[\text{Net-Reg}(T)] = \mathcal{O}(T^{\max\{\theta_1,1-\theta_1\}} + T^{\theta_1 } P_T+H_{1,T} + E_{1,T}), \label{thm1ret}  \\
		&\mathbb{E}[\text{Net-CCV}(T)] = \mathcal{O}(T^{1-\theta_1/2}). \label{thm1Cvv11}
	\end{align}
	Moreover, if Assumption~\ref{ass.slater} holds, then
	\begin{equation}
		\mathbb{E}[\text{Net-CCV}(T)] = \mathcal{O}(T^{1-\theta_1}).\label{thm1Cvv22} 
	\end{equation}
\end{theorem}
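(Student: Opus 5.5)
The proof combines the mirror-descent primal--dual analysis of \cite{Xinlei2025}, adapted to compressed consensus, with tracking-error recursions for the inner solution and the HvP. These recursions let us replace the exact hypergradient $\nabla f_{i,t}(x_{i,t},y^*_{i,t}(x_{i,t}))$ by the surrogate $\tilde\nabla f_{i,t}$ at a cost that is summable up to path-variation terms. We carry it out in four steps.

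\textbf{Step 1 (compressed consensus).} Let $\bar z_t=\frac1N\sum_i z_{i,t}$. Write the compression error as $e_{j,t}=\hat z_{j,t}-z_{j,t}$. By nonexpansiveness of $\mathcal P_{\mathcal X}$ and Assumption~\ref{ass.cpmpress}, $\mathbb E\|e_{j,t}\|^2\le s_t^2C$. Hence $x_{i,t}=\sum_j[W]_{ij}z_{j,t}+\sum_j[W]_{ij}e_{j,t}$.

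By (\ref{Algorithm1-eq6}) and strong convexity of $\psi$, $\|z_{i,t+1}-x_{i,t}\|\le \alpha_t\|\omega_{i,t+1}\|/\eta$. Moreover, $\|\omega_{i,t+1}\|\le C_{f,x}+C_{g,xy}r_v+G_h\gamma_t\|[h_{i,t}(x_{i,t})]_+\|$, using the projection (\ref{eq:v_update}).

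Using Assumption~\ref{ass:network_fixed} (geometric mixing of $W^k$), a standard perturbed-consensus lemma gives
\[
\textstyle\sum_{t}\mathbb E\|x_{i,t}-\bar z_t\|\lesssim \sum_t\big(\alpha_t\mathbb E\|\omega_{i,t+1}\| + s_t\sqrt C\big).
\]
Since $\theta_2=\theta_1+1>1$, the compression part $\sum_t s_t$ is $\mathcal O(1)$. The same argument bounds $\|x_{i,t+1}-x_{i,t}\|$, which is needed in Step 2.

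\textbf{Step 2 (bilevel tracking errors).} By strong convexity and $L_{g,y}$-smoothness with $\beta\le 2/(\mu_g+L_{g,y})$, we get a contraction with $\rho=1-\beta\mu_g<1$ (up to constants):
\[
\|y_{i,t+1}-y^*_{i,t}(x_{i,t})\|\le\rho\|y_{i,t}-y^*_{i,t-1}(x_{i,t-1})\|+\rho\big(L_y\|x_{i,t}-x_{i,t-1}\|+\sup_x\|y^*_{i,t}(x)-y^*_{i,t-1}(x)\|\big),
\]
where $L_y=C_{g,xy}/\mu_g$ is the Lipschitz constant of $y^*$.

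Similarly, $\lambda\le1/L_{g,y}$ gives the contraction $1-\lambda\mu_g$ for the HvP step. The projection onto the ball of radius $r_v\ge\|v^*\|$ is nonexpansive toward $v^*_{i,t}$. The drift terms are $\|x_{i,t}-x_{i,t-1}\|$, $\|y_{i,t+1}-y^*_{i,t}\|$, and the Hessian and $\nabla_y f$ variations.

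Summing these geometric recursions over $t$ gives
\[
\textstyle\sum_t\|\tilde\nabla f_{i,t}-\nabla f_{i,t}(x_{i,t},y^*_{i,t})\|\lesssim \mathcal O(1)+\sum_t\|x_{i,t}-x_{i,t-1}\|+H_{1,T}+E_{1,T}.
\]
The Lipschitz constants of Assumptions~\ref{ass:outer}--\ref{ass:inner} convert $y$- and $v$-errors into hypergradient errors.

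\textbf{Step 3 (one-round primal--dual inequality).} Apply the three-point identity (\ref{three}) to (\ref{Algorithm1-eq6}) with comparator $x_t^*$. Use convexity of $f_t$ (Assumption~\ref{ass:outer}(iv)) and of $h_{i,t}$. Use $\varphi_{i,t+1}=\gamma_t[h_{i,t}(x_{i,t})]_+$ together with $h_t(x_t^*)\le0$, and Jensen plus convexity of $\mathcal D_\psi(x,\cdot)$ to pass through the doubly stochastic averaging. This should yield
\[
\textstyle f_t(x_{i,t})-f_t(x_t^*)+\frac{\gamma_t}{2}\|[h_{i,t}(x_{i,t})]_+\|^2\le \frac{1}{\alpha_t}\big(\mathcal D_\psi(x_t^*,x_{i,t})-\mathcal D_\psi(x_t^*,z_{i,t+1})\big)+\alpha_t\,\text{const}+\text{(error)}.
\]
The condition $\gamma_0\le\eta/(4G_h^2)$ absorbs the term $\alpha_t G_h^2\gamma_t^2\|[h]_+\|^2/\eta$ into half of the $\gamma_t\|[h]_+\|^2$ term; this is the key choice. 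The error term collects the Step 2 hypergradient error times $d(\mathcal X)$ and the consensus terms.

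Telescoping the Bregman terms uses (\ref{Klip}) and (\ref{KKbound}): the comparator shift contributes $K\|x^*_{t+1}-x^*_t\|/\alpha_{t+1}$, giving $T^{\theta_1}P_T$, and the changing step sizes give $Kd(\mathcal X)/\alpha_T=\mathcal O(T^{\theta_1})$. The sum $\sum\alpha_t=\mathcal O(T^{1-\theta_1})$. The $\|x_{i,t}-x_{i,t-1}\|$ terms are bounded by $\mathcal O(\alpha_t+s_t)$ plus a dual-weighted part absorbed by the squared-violation term. This yields (\ref{thm1ret}).

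\textbf{Step 4 (constraint violation).} Since $f_t$ is bounded on $\mathcal X$, $f_t(x_{i,t})-f_t(x_t^*)\ge -F$. The same inequality with $x^*\in\Omega$ then gives $\sum_t\gamma_t\|[h]_+\|^2\lesssim T$ plus lower-order terms, for a fixed comparator where no $P_T$ term appears. By Cauchy--Schwarz,
\[
\textstyle\sum_t\|[h]_+\|\le\big(\sum_t\gamma_t\|[h]_+\|^2\big)^{1/2}\big(\sum_t\gamma_t^{-1}\big)^{1/2}=\mathcal O(T^{1/2}T^{(1-\theta_1)/2})=\mathcal O(T^{1-\theta_1/2}).
\]
Under Assumption~\ref{ass.slater}, instead use comparator $\bar x$ and keep the unclipped term $\varphi_{i,t+1}^Th_{i,t}(\bar x)\le-\varsigma_s\gamma_t\|[h_{i,t}(x_{i,t})]_+\|_1$. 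This converts the inequality into a linear bound $\varsigma_s\sum_t\gamma_t\|[h]_+\|\lesssim T+\dots$. Dividing by $\gamma_t\gtrsim t^{\theta_1}$, after an Abel-summation argument with monotone $\gamma_t$, gives $\mathcal O(T^{1-\theta_1})$.

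\textbf{Main obstacle.} The hardest part will be the circular coupling: $\|x_{i,t}-x_{i,t-1}\|$ depends on $\gamma_t\|[h]_+\|$, which can be large, and it drives both tracking errors, which feed back into the regret. I would handle this by bounding the dual-weighted drift by $\epsilon\gamma_t\|[h]_+\|^2+c\,\alpha_t^2\gamma_t/\epsilon$ and absorbing the first part into the retained violation term.
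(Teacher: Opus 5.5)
Your outline is sound and has the same architecture as the paper's proof. Both use the mirror-descent primal--dual step with $\varphi_{i,t+1}=\gamma_t[h_{i,t}(x_{i,t})]_+$, a perturbed-consensus bound driven by $\|z_{i,t+1}-x_{i,t}\|$ and the $s_t$-scaled compression error, contractive inner/HvP recursions summed into $H_{1,T}+E_{1,T}$, comparator-drift telescoping that produces $T^{\theta_1}P_T$, and the unclipped term $\varphi_{i,t+1}^T h_{i,t}(\bar x)$ under Slater. Three technical choices differ, and all of them work.

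(i) For the circular coupling you split $\|z_{i,t+1}-x_{i,t}\|\le \alpha_t G_s/\eta+(G_h\gamma_0/\eta)\|[h_{i,t}(x_{i,t})]_+\|$. You then absorb the second part into the retained $\frac{\gamma_t}{2}\|[h_{i,t}(x_{i,t})]_+\|^2$, paying $\sum_t 1/\gamma_t=\mathcal O(T^{1-\theta_1})$. The paper never splits this quantity. It keeps the strong-convexity residual $-\frac{\eta}{4\alpha_t}\|x_{i,t}-z_{i,t+1}\|^2$ from Lemma~\ref{lem.55} and applies Young directly to $\tilde\psi_2\|z_{i,t+1}-x_{i,t}\|$ in \eqref{eq:epsilon_bound}, at cost $\sum_t\alpha_t$. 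It simply discards the violation term in the regret part.

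(ii) You measure the HvP error against $v^*_{i,t}(x_{i,t})$, so the inner error $\|y_{i,t+1}-y^*_{i,t}(x_{i,t})\|$ appears as a drift and the two recursions can be summed in cascade. The paper measures it against $u^*_{i,t}=v_{i,t}(x_{i,t},y_{i,t+1})$, whose drift contains $\|y_{i,t+1}-y_{i,t}\|$. It therefore needs the weighted potential $\Phi_{i,t}$ with the tuned $\kappa_F$ to obtain the single contraction \eqref{AA98}. Your route is simpler.

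(iii) For the violation you sum the undivided inequality, then use weighted Cauchy--Schwarz (no Slater) or Abel summation (Slater). The Abel step needs $\sum_i\sum_{t\le T'}\gamma_t\mathbb E\|[h_{i,t}(x_{i,t})]_+\|\lesssim T'$ for every $T'\le T$. This is available because the iterates do not depend on $T$. The paper instead divides by $\gamma_t$ before summing, so the Bregman weights become the constant $1/\gamma_0$ and the telescoping costs $\mathcal O(1)$. Under Slater this gives $\varsigma_s\sum_{i,t}\mathbb E\|[h_{i,t}(x_{i,t})]_+\|\le\tilde\Lambda_T(\{\bar x\})=\mathcal O(T^{1-\theta_1})$ in one line, which is the pointwise version of your Abel step.

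Three points need to be made explicit.

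First, $\mathrm{Net\text{-}CCV}(T)$ involves $[h_t(x_{i,t})]_+$, i.e., every agent's constraint evaluated at $x_{i,t}$. Step~4 only controls $[h_{i,t}(x_{i,t})]_+$. You need $\|[h_{j,t}(x_{i,t})]_+\|\le\|[h_{j,t}(x_{j,t})]_+\|+G_h\|x_{i,t}-x_{j,t}\|$ together with the consensus bound. Note that the consensus error contains $\frac{G_h\gamma_0}{\eta}\sum_{j,t}\|[h_{j,t}(x_{j,t})]_+\|$, so it is of the same order as the local violation; this is \eqref{Lemma6-proof-iii-eq1}--\eqref{Lemma6-eq3} in the paper.

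Second, in Step~4 the hypergradient error contained in ``the same inequality'' must be bounded crudely per round, since both $\tilde\nabla f_{i,t}$ and the exact hypergradient are uniformly bounded. It must not be bounded through Step~2. Otherwise $H_{1,T}$, which need not be $\mathcal O(T)$ under the stated assumptions, leaks into a violation bound that the theorem states without it. The cleanest fix is to lower-bound $\langle\tilde\nabla f_{i,t},x_{i,t}-\hat x\rangle\ge -G_s d(\mathcal X)$ directly, as the paper does, rather than passing through $f_t$.

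Third, replacing $\hat z_{j,t+1}$ by $z_{j,t+1}$ in the Bregman telescoping costs $\frac{1}{\alpha_t}\mathcal O(s_{t+1})$ per round. The relevant sum is therefore $\sum_t s_{t+1}/\alpha_t=\mathcal O(\ln T)$. This sum, rather than $\sum_t s_t<\infty$, is what the choice $\theta_2=\theta_1+1$ is calibrated for.
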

\begin{proof}
	The detailed proof is provided in Appendix~\ref{BBBB}.
\end{proof}

The variation terms \(H_{1,T}\) and \(E_{1,T}\) in (\ref{thm1ret}) accumulate temporal changes in norm, whereas the corresponding bounds in~\cite{Onlinelocalregret} involve the squared variations \(H_{2,T}\) and \(E_{2,T}\). Consequently, the contribution of a large environmental change grows linearly with its magnitude in our bound rather than quadratically. The norm based treatment also simplifies the HvP approximation analysis. 
In Lemma~\ref{lem:v_bound_dist_multi_norm}, splitting the error at \(\hat v_{i,t}=v_{i,t}(x_{i,t},y_{i,t+1})\), applying the contraction of the projected iteration in (\ref{eq:w_multi_loop}) and (\ref{eq:v_proj_loop}), and using the Lipschitz bound in (\ref{eq:lip_v}) yield (\ref{eq:v_bound_dist_res_norm}). 
The norm-level analysis nevertheless introduces a different coupling difficulty in the single-loop case. In particular, the HvP recursion in (\ref{eq:lem_v_bound_final}) contains the inner increment \(\|y_{i,t+1}-y_{i,t}\|\). Instead of treating this increment as an independent error, the update in (\ref{estY}) and the Lipschitz continuity of \(\nabla_y g_{i,t}\) give \(\|y_{i,t+1}-y_{i,t}\|\leq\beta L_{g,y}\|y_{i,t}-y_{i,t}^{*}(x_{i,t})\|\). The weight \(\kappa_F\) is then cleverly designed so that the contraction of the inner tracking error absorbs the additional coupling term induced by the HvP recursion. This yields the contractive recursion in~(\ref{AA98}) and preserves the linear dependence on \(H_{1,T}\) and \(E_{1,T}\) after summation.

\begin{corollary}\label{cor1}
Suppose the conditions of Theorem~\ref{thm1} hold. The static network regret and network cumulative constraint violation of Algorithm~\ref{algo1} satisfy
	\begin{align}
		\mathbb{E}[\text{Net-SReg}(T)] =& \mathcal{O}(T^{\max\{\theta_1,1-\theta_1\}} + H_{1,T} + E_{1,T})\\
		\mathbb{E}[\text{Net-CCV}(T)] =& \mathcal{O}(T^{1-\theta_1/2}).
	\end{align}
Furthermore, under Assumption~\ref{ass.slater}, it holds
	\begin{equation}
		\mathbb{E}[\text{Net-CCV}(T)] = \mathcal{O}(T^{1-\theta_1}). 
	\end{equation}
\end{corollary}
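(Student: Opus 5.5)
The corollary follows from Theorem~\ref{thm1} by specializing the comparator sequence. The plan is to reuse the regret decomposition from the proof of Theorem~\ref{thm1} with a time-invariant comparator $x_t^*\equiv x^*$, so that the path-variation term $T^{\theta_1}P_T$ disappears.

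The proof of Theorem~\ref{thm1} is presumably built on a one-round inequality derived from the mirror-descent step \eqref{Algorithm1-eq6}, the three-point identity \eqref{three} and the convexity of $f_t$ and $h_{i,t}$. That inequality is stated against an arbitrary comparator $\check x_t$ satisfying $h_t(\check x_t)\le 0$. The only place the comparator's motion enters is the telescoping of $\mathcal{D}_\psi(\check x_t,x_{i,t})/\alpha_t$ terms. That telescoping produces:
\begin{itemize}
\item a term $\sum_t (1/\alpha_{t+1}-1/\alpha_t)Kd(\mathcal{X})=\mathcal{O}(T^{\theta_1})$, using \eqref{KKbound};
\item a term $\sum_t \frac{1}{\alpha_{t+1}}|\mathcal{D}_\psi(\check x_{t+1},\cdot)-\mathcal{D}_\psi(\check x_t,\cdot)|\le K\sum_t\|\check x_{t+1}-\check x_t\|/\alpha_{t+1}$, using \eqref{Klip}. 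This is the source of $T^{\theta_1}P_T$.
\end{itemize}

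Step one is to take $\check x_t=x^*\in\Omega$. This choice is admissible because $x^*\in\Omega\subseteq\mathcal{X}_t$ for every $t$. The dual term $\varphi_{i,t+1}^T h_{i,t}(x^*)\le 0$ then has the correct sign, exactly as it did for $x_t^*$. With a constant comparator the second telescoping term vanishes identically, so the replacement of $P_T$ by $0$ is exact.

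Step two is to keep every other contribution exactly as it was bounded in Theorem~\ref{thm1}. These contributions are:
\begin{itemize}
\item the consensus and compression errors, controlled via $s_t=s_0/t^{\theta_1+1}$ and Assumption~\ref{ass.cpmpress};
\item the hypergradient approximation error from the inner tracker \eqref{estY} and the HvP tracker \eqref{eq:w_update}--\eqref{eq:v_update}, summed through the contractive recursion to give $\mathcal{O}(1+H_{1,T}+E_{1,T}+\sum_t\|x_{i,t+1}-x_{i,t}\|)$;
\item the $\alpha_t\|\omega_{i,t+1}\|^2$ terms, which contribute $\mathcal{O}(T^{1-\theta_1})$ after the $-\|\varphi\|^2/(2\gamma_t)$ regularization absorbs the dual part, using $\gamma_0\le\eta/(4G_h^2)$.
\end{itemize}
None of these depend on the comparator, so they give $\mathcal{O}(T^{\max\{\theta_1,1-\theta_1\}}+H_{1,T}+E_{1,T})$ for $\mathbb{E}[\text{Net-SReg}(T)]$.

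Step three handles constraint violation. The bounds on $\mathbb{E}[\text{Net-CCV}(T)]$ do not involve any comparator, so they transfer verbatim from \eqref{thm1Cvv11} and \eqref{thm1Cvv22}; under Assumption~\ref{ass.slater} the Slater point $\bar x$ plays the same role as in Theorem~\ref{thm1}.

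The main obstacle is making sure that the intermediate inequality in the appendix proof really holds for an arbitrary feasible comparator, and not only for the specific optimizer $x_t^*$. One concern is whether the proof uses optimality of $x_t^*$, for instance to lower-bound $f_t(x_{i,t})-f_t(x_t^*)$ when bounding the CCV. It does not appear to need it for the regret part, since only the convexity and feasibility of the comparator enter. If optimality were used somewhere, that step would instead be applied with $x_t^*$ while the regret step alone is rerun with $x^*$.
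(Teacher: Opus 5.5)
Your proposal is correct and is essentially the argument the paper leaves implicit. Rerunning the proof of Theorem~\ref{thm1} with the constant comparator $\hat{x}_t = x^*\in\Omega$ for all $t\in[T+1]$ in Lemma~\ref{lem.55} and \eqref{quer} makes the term $K\sum_{t}\|\hat{x}_{t+1}-\hat{x}_t\|/\alpha_t$ vanish, leaving $Kd(\mathcal{X})/\alpha_T=\mathcal{O}(T^{\theta_1})$, while the $H_{1,T}$ and $E_{1,T}$ contributions and both Net-CCV bounds do not depend on the comparator and carry over unchanged.
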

\begin{remark}
	In online constrained optimization, the cumulative constraint violation strictly penalizes instantaneous violations at each round, representing a stricter criterion than the conventional long-term constraint violation metric in~\cite{Yuan2022Tac}. As pointed out in~\cite{Xinlei2025}, while existing algorithms typically replace the original constraint function $h_t(x)$ with its non-negative clipped version $[h_t(x)]_+$ inside the Lagrangian, such an operation destroys the strict feasibility margin provided by Slater's condition. In contrast,  our proposed primal--dual updates directly maximize the regularized Lagrangian without early clipping. This design preserves the strict feasibility margin $\varsigma_s$, thereby successfully tightening the network cumulative constraint violation bound from $\mathcal{O}(T^{1-\theta_1/2})$ to $\mathcal{O}(T^{1-\theta_1})$. 
Compared with existing online bilevel optimization methods~\cite{OnlienBI11,OnlineBereNips33,OnlineFirAcx,Onlinelocalregret}, which do not provide constraint violation guarantees for time-varying inequality constraints, our result provides, to the best of our knowledge, the first such guarantee for online bilevel optimization. Compared with the Slater-based improvement established in single-level distributed online optimization~\cite{Xinlei2025,kunCdc}, the bilevel setting additionally requires controlling inner-level and HvP tracking errors together with network disagreement and compression errors while preserving the tightened violation bound.
\end{remark} %%%%%%%%%%%%%%%%%%%%%%%%%%%%%%%%%%%%%%%%%%%%%%%%%%%%

\section{Adaptive Multi-loop Distributed Compressed Online Primal--Dual Algorithm}\label{sec4}
In this section, an Adaptive Multi-loop Distributed Compressed Online Primal--dual (AMDCO-PD) algorithm is presented. Furthermore, its performance without and with Slater's condition is analyzed.

\subsection{Algorithm Description}
As shown in Theorem~\ref{thm1}, the dynamic network regret of Algorithm~\ref{algo1} depends on the inner-level and gradient path variation. Consequently, Algorithm~\ref{algo1} struggles to adapt to adversarial variations in the inner-level functions. To overcome this, we adopt a dynamic inner iteration strategy to track the drift of $y_{i,t}^*(x)$ at each round.

First, instead of a single gradient step, warm-started with $y_{i,t+1}^{(0)} = y_{i,t}$, the inner-level variable is updated via
\begin{equation}\label{eq:y_inner_loop}
	y_{i,t+1}^{(k+1)} = y_{i,t+1}^{(k)} - \beta \nabla_y g_{i,t}(x_{i,t}, y_{i,t+1}^{(k)}),
\end{equation}
where $k$ denotes the inner iteration index. Instead of a fixed number of steps, the loop adaptively terminates when the inner gradient norm satisfies $\|\nabla_y g_{i,t}(x_{i,t}, y_{i,t+1}^{(k)})\| \le \delta$  with $\delta>0$ being a prescribed inner-level gradient norm tolerance, and we set the final evaluation point as $y_{i,t+1} = y_{i,t+1}^{(k)}$. This adaptive stopping criterion explicitly controls the approximation error, tightly characterizing the exact inner optimum within a $\delta$-neighborhood.

Subsequently, to estimate the HvP without requiring high-dimensional matrix inversion, we perform a fixed $M$-step projected gradient descent. Let $m \in \{1, \dots, M\}$ denote the iteration index for the HvP estimation. Initialized with $v_{i,t+1}^{(0)} = v_{i,t}$, the tracking variable is updated via
\begin{align}\label{eq:w_multi_loop}
	w_{i,t+1}^{(m)} =& v_{i,t+1}^{(m-1)} - \lambda ( \nabla_{yy}^2 g_{i,t}(x_{i,t}, y_{i,t+1}) v_{i,t+1}^{(m-1)} \nonumber\\
	&\quad - \nabla_y f_{i,t}(x_{i,t}, y_{i,t+1}) ).
\end{align}
Similar to (\ref{eq:v_update}), the updated vector is projected onto the closed Euclidean ball with radius $r_v$ via
\begin{equation}\label{eq:v_proj_loop}
	v_{i,t+1}^{(m)} = 
	\begin{cases}
		w_{i,t+1}^{(m)}, & \|w_{i,t+1}^{(m)}\| \le r_v, \\
		\dfrac{r_v w_{i,t+1}^{(m)}}{\|w_{i,t+1}^{(m)}\|}, & \text{otherwise}.
	\end{cases}
\end{equation}
After $M$ iterations, we set $v_{i,t+1} = v_{i,t+1}^{(M)}$. The remaining procedures are similar to those in the SDCO-PD algorithm.

The role of these two modifications is to replace the path-dependent recursive tracking errors of the SDCO-PD algorithm  with per-round approximation errors. In particular, the strong convexity of \(g_{i,t}(x,\cdot)\) yields
\begin{align}\label{DDDDD39}
 \|y_{i,t+1}-y_{i,t}^*(x_{i,t})\| \le \frac{\delta}{\mu_g}, 
\end{align}
while the \(M\)-step HvP iteration yields a geometrically decaying error proportional to \(\rho_H^{M/2}\), where $\rho_H = 1 - \lambda \mu_g \in [0, 1)$ for $0 < \lambda < 1 / L_{g,y}$. Consequently, the regret analysis no longer requires recursively accumulating the temporal variations \(H_{1,T}\) and \(E_{1,T}\). This mechanism is formalized in Lemma~\ref{lem:v_bound_dist_multi_norm} and in (\ref{eq:dist_45_norm})--(\ref{eq:Net_Reg_Final_Thm2}).
 Compared with the centralized online bilevel setting in~\cite{Onlinelocalregret}, the adaptive multi-loop updates must here be established jointly with network disagreement and compression perturbations, which enter the distributed primal dynamics.

Compared to the single-loop design of the SDCO-PD algorithm, the AMDCO-PD algorithm based on adaptive multi-loop updates, as summarized in Algorithm~\ref{alg.2}, is proposed to eliminate the explicit dependence of the regret bound on the inner-level and gradient path variations \(H_{1,T}\) and \(E_{1,T}\).

For subsequent analysis, let $K_{i,t} = \min\{k \ge 0 : \Vert{}\nabla_y g_{i,t}(x_{i,t}, y_{i,t+1}^{(k)})\Vert{} \le \delta\}$ denote the number of inner-gradient steps executed by agent $i$ at round $t$, and define $I_{\text{net}} = \sum_{i=1}^{N} \sum_{t=1}^{T} K_{i,t}$.   
\begin{algorithm}[!t] 
	\caption{Adaptive Multi-loop Distributed Compressed Online Primal--Dual (AMDCO-PD) Algorithm} 
	\begin{algorithmic}\label{alg.2}
		\renewcommand{\algorithmicrequire}{\textbf{Input:}}
		\REQUIRE
		Non-increasing $\{ {\alpha _t}\},  \{ {{s _t}} \} \subseteq ( {0, + \infty })$, non-decreasing $\{ {\gamma _t}\} \subseteq ( {0, + \infty })$, step sizes $\beta, \lambda > 0$, multi-loop tolerance $\delta > 0$, HvP iteration limit $M \ge 1$, and radius $r_v\ge C_{f,y}/\mu_g$.
		\renewcommand{\algorithmicrequire}{\textbf{Initialize:}}
		\REQUIRE
		${\hat{z}_{j,0}} = {\mathbf{0}_q}$ for $j \in [ N ]$, and ${z_{i,1}} \in {\mathcal{X}}$, $y_{i,1}= 0_p$, $v_{i,1}= 0_p$, $\varphi_{i,1} = 0_{m_i}$, $\forall i \in [ N ]$.
		\FOR {$t = 1, \cdot  \cdot  \cdot $}
		\FOR {$i = 1,\cdot  \cdot  \cdot, N$ in parallel}
		\STATE Update the projected compressed variable ${\hat{z}_{j,t}}$ via \eqref{Algorithm1-eq1}.
		\STATE Select the local decision variable ${x_{i,t}}$ via \eqref{Algorithm1-eq3}.
		\STATE Observe the current local functions \(f_{i,t}\), \(g_{i,t}\), and \(h_{i,t}\), and access the required first- and second-order derivative oracles at the queried points.
		\STATE Update the local dual variable ${\varphi_{i,t + 1}}$ via \eqref{Algorithm1-eq4}.
		\STATE \textbf{Adaptive Multi-loop for $y$ }
		\STATE Initialize $y_{i,t+1}^{(0)} = y_{i,t}$
		\STATE Set $k = 0$
		\WHILE {$\| \nabla_y g_{i,t}(x_{i,t}, y_{i,t+1}^{(k)}) \| > \delta$}
		\STATE Update $y_{i,t+1}^{(k+1)}$ via \eqref{eq:y_inner_loop}.
		\STATE $k = k + 1$
		\ENDWHILE
		\STATE Set $y_{i,t+1} = y_{i,t+1}^{(k)}$
		
		\STATE \textbf{Multi-loop for HvP estimation $v$}
		\STATE Initialize $v_{i,t+1}^{(0)} = v_{i,t}$
		\FOR {$m = 1$ to $M$}
		\STATE Update $w_{i,t+1}^{(m)}$ via \eqref{eq:w_multi_loop}.
		\STATE Update $v_{i,t+1}^{(m)}$ by projecting $w_{i,t+1}^{(m)}$ onto the $\ell_2$-ball via \eqref{eq:v_proj_loop}.
		\ENDFOR
		\STATE Set $v_{i,t+1} = v_{i,t+1}^{(M)}$
		
		\STATE Approximate the hypergradient $\tilde{\nabla}f_{i,t}(x_{i,t},y_{i,t+1},v_{i,t+1})$ via \eqref{HuGradi}.
			\STATE Update the gradient tracking direction ${{\omega} _{i,t + 1}}$ via \eqref{Algorithm1-eq5}.
		\STATE Update the pre-consensus primal variable $z_{i,t+1}$ via \eqref{Algorithm1-eq6}. 
		\ENDFOR
		\ENDFOR
		\renewcommand{\algorithmicensure}{\textbf{Output:}}
		\ENSURE
		$\{ x_{i,t} \}$.
	\end{algorithmic}
\end{algorithm}

\subsection{Performance Analysis}
In this section, we present the theoretical guarantees of the AMDCO-PD algorithm.

\begin{theorem}\label{thm2}
	Suppose Assumptions~\ref{ass:network_fixed}--\ref{ass.mir} and \ref{ass.cpmpress} hold. For all $i \in [N]$, let $\{x_{i,t}\}$ be the sequences generated by Algorithm~\ref{alg.2} with parameters $\beta \le {2}/({\mu_g + L_{g,y}})$, $\lambda < {1}/{L_{g,y}}$, $\rho_H=1-\lambda \mu_g$, $M =\max  \{1,\lceil -\ln T / \ln \rho_H \rceil\}$, $\delta = \mathcal{O}(T^{-1/2})$ and
	\begin{equation}\label{eq:canshu_alg2}
		\alpha_t = \frac{\alpha_0}{t^{\theta_1}}, \quad \gamma_t = \frac{\gamma_0}{\alpha_t}, \quad s_t=\frac{s_0}{t^{\theta_2}}, \quad \forall t \in \mathbb{N}_+, 
	\end{equation}
	where $\rho_H \in (0,1) $, $\alpha_0, s_0 > 0$, $\theta_1 \in (0, 1)$, $\theta_2=\theta_1+1$, and $\gamma_0 \in (0, \eta/(4G_h^2)]$ are constants. Then, for any $T \in \mathbb{N}_+$, the dynamic network regret and network cumulative constraint violation satisfy
	\begin{align}
		\mathbb{E}[\text{Net-Reg}(T)] &= \mathcal{O}(T^{\max\{\theta_1,1-\theta_1\}} + T^{\theta_1 } P_T),  \label{thm2ret}\\
		\mathbb{E}[\text{Net-CCV}(T)] &= \mathcal{O}(T^{1-\theta_1/2}). \label{thm2Cvv11}
	\end{align}
	The total number of inner iterations $\mathcal{I}_{net}$ satisfies
	\begin{align}\label{totalNum}
	\mathbb{E}[\mathcal{I}_{net}]\le \mathcal{O}(T\log (\sqrt{T})+H_{1,T}).
	\end{align}
	Moreover, if Assumption~\ref{ass.slater} holds, then
	\begin{equation}
		\mathbb{E}[\text{Net-CCV}(T)] = \mathcal{O}(T^{1-\theta_1}).\label{thm2Cvv22} 
	\end{equation}

\end{theorem}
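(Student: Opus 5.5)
We follow the architecture of the proof of Theorem~\ref{thm1}: one per-round primal--dual inequality, summed over $t$, whose hypergradient error terms are now controlled round by round rather than through recursions that accumulate $H_{1,T}$ and $E_{1,T}$.

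\emph{Step 1 (per-round inequality).} We reuse the basic inequality from the analysis of Algorithm~\ref{algo1}. It is obtained from the optimality condition of the mirror step (\ref{Algorithm1-eq6}), the three-point identity (\ref{three}), the dual choice (\ref{Algorithm1-eq4}), and convexity of $f_t$ and $h_{i,t}$. We compare against $x_t^*$ and pass from $x_t^*$ to $x_{t+1}^*$ inside the Bregman terms by (\ref{Klip}). With the choice of $\gamma_0$ and $\gamma_t=\gamma_0/\alpha_t$, this gives, for each $t$,
\[
\tfrac1N\textstyle\sum_i\big[f_t(x_{i,t})-f_t(x_t^*)\big]+\text{(dual/violation terms)}\le \tfrac{1}{\alpha_t}\big(\Delta_t-\Delta_{t+1}\big)+\tfrac{K}{\alpha_t}\|x_{t+1}^*-x_t^*\|+\mathcal{O}(\alpha_t)+\text{cons}_t+\text{comp}_t+\tfrac{d(\mathcal{X})}{N}\textstyle\sum_i\|e_{i,t}\|,
\]
where $e_{i,t}=\tilde\nabla f_{i,t}-\nabla f_{i,t}(x_{i,t},y^*_{i,t}(x_{i,t}))$ is the hypergradient error. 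The consensus and compression terms are bounded exactly as in Theorem~\ref{thm1}, using Assumption~\ref{ass:network_fixed}, the bound $s_t^2C$ on the projected compression error, and $\theta_2=\theta_1+1$. Both algorithms share (\ref{Algorithm1-eq1})--(\ref{Algorithm1-eq6}), and the boundedness of $v_{i,t}$ (by $r_v$) keeps $\omega_{i,t+1}$ uniformly bounded. Summing and using the monotonicity of $1/\alpha_t$ yields $\mathcal{O}(T^{1-\theta_1}+T^{\theta_1}+T^{\theta_1}P_T)$ plus $\sum_t\sum_i\|e_{i,t}\|$.

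\emph{Step 2 (per-round error, the key new ingredient).} We decompose
\[
\|e_{i,t}\|\le L_1\|y_{i,t+1}-y^*_{i,t}(x_{i,t})\|+C_{g,xy}\|v_{i,t+1}-v^*_{i,t}(x_{i,t})\|
\]
using Assumptions~\ref{ass:outer}--\ref{ass:inner}. The first term is at most $\delta/\mu_g$ by (\ref{DDDDD39}). For the second, we split at $\hat v_{i,t}=v_{i,t}(x_{i,t},y_{i,t+1})$, which lies in the $r_v$-ball. Since $\lambda<1/L_{g,y}$ and projection onto a convex set is nonexpansive, the map in (\ref{eq:w_multi_loop})--(\ref{eq:v_proj_loop}) is a contraction with rate at most $\rho_H^{1/2}$ toward $\hat v_{i,t}$. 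After $M$ steps the error is therefore at most $2r_v\rho_H^{M/2}$. Moreover, $v_{i,t}(x,\cdot)$ is Lipschitz in $y$ with a constant $L_v$ built from $L_{f,y}/\mu_g$ and $L_{g,yy}C_{f,y}/\mu_g^2$, so $\|\hat v_{i,t}-v^*_{i,t}\|\le L_v\delta/\mu_g$. This is Lemma~\ref{lem:v_bound_dist_multi_norm}. With $M\ge -\ln T/\ln\rho_H$ we get $\rho_H^{M/2}\le T^{-1/2}$, and with $\delta=\mathcal{O}(T^{-1/2})$ we get $\sum_t\|e_{i,t}\|=\mathcal{O}(\sqrt T)$. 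This is $\mathcal{O}(T^{\max\{\theta_1,1-\theta_1\}})$ since $\max\{\theta_1,1-\theta_1\}\ge 1/2$, which gives (\ref{thm2ret}). Since neither $v_{i,t}$ nor the preceding bound involve past rounds, $H_{1,T}$ and $E_{1,T}$ never appear.

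\emph{Step 3 (constraint violation).} This follows Theorem~\ref{thm1} verbatim. Because $f_t$ is bounded on the compact $\mathcal{X}$, the regret-type inequality with the kept dual term gives $\sum_t\gamma_t\|[h_t]_+\|^2$ controlled by $\mathcal{O}(T)$. Cauchy--Schwarz with $\gamma_t=\gamma_0t^{\theta_1}/\alpha_0$ then yields (\ref{thm2Cvv11}). Under Assumption~\ref{ass.slater}, we compare with $\bar x$ instead of $x_t^*$. The margin $-\varsigma_s\varphi^T\mathbf 1$ survives because the Lagrangian keeps $h_t$ unclipped, and this gives (\ref{thm2Cvv22}). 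The hypergradient errors enter only additively as $\mathcal{O}(\sqrt T)$, which is dominated.

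\emph{Step 4 (iteration count), the main obstacle.} Gradient descent with $\beta\le 2/(\mu_g+L_{g,y})$ contracts $\|y-y^*\|$ by a factor $q<1$. Termination occurs once $L_{g,y}\|y^{(k)}-y^*_{i,t}(x_{i,t})\|\le\delta$, so
\[
K_{i,t}\le 1+\log_{1/q}\!\big(L_{g,y}\|y_{i,t}-y^*_{i,t}(x_{i,t})\|/\delta\big).
\]
The warm-start error splits as
\[
\|y_{i,t}-y^*_{i,t}(x_{i,t})\|\le \delta/\mu_g+L_y\|x_{i,t}-x_{i,t-1}\|+\sup_x\|y^*_{i,t}(x)-y^*_{i,t-1}(x)\|.
\]
We use $\log(1+a+b)\le\log(1+a)+b$ and bound $\log(u/\delta)\le \log(\sqrt T)+\mathcal{O}(1)$ on the bounded part. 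The drift part gives $\sum_t\sup\|\cdot\|/\delta$, which is the delicate point: a naive bound gives $\sqrt T H_{1,T}$, not $H_{1,T}$. To obtain the linear $H_{1,T}$ in (\ref{totalNum}), we instead apply $\log(1+b/\delta)\le\log(1+1/\delta)+\log(1+b)\le\log(1+1/\delta)+b$. The $\mathbb{E}\|x_{i,t}-x_{i,t-1}\|$ term is $\mathcal{O}(\alpha_t)+\mathcal{O}(s_t)$ and is absorbed in the logarithm. This yields $\mathcal{O}(T\log\sqrt T+H_{1,T})$.
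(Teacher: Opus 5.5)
Your proposal is correct and follows the paper's proof essentially step for step: Lemma~\ref{lem.55} with $\hat x_t=x_t^*$ plus the telescoping bound (\ref{quer}), the per-round hypergradient error from (\ref{KKey}) and Lemma~\ref{lem:v_bound_dist_multi_norm} with $\rho_H^{M/2}\le T^{-1/2}$ and $\delta=\mathcal{O}(T^{-1/2})$, the violation bounds inherited from Theorem~\ref{thm1}, and an iteration count in which $\ln(1/\delta)$ is split off before the warm-start distance is linearized, which is exactly how the paper obtains $H_{1,T}$ rather than $\sqrt{T}H_{1,T}$. Two side remarks should be corrected, though neither breaks the argument: first, $\omega_{i,t+1}$ is not uniformly bounded, since its dual part $\nabla h_{i,t}(x_{i,t})^T\varphi_{i,t+1}$ scales with $\gamma_t$, so $\|x_{i,t}-x_{i,t-1}\|$ need not be $\mathcal{O}(\alpha_t)+\mathcal{O}(s_t)$, but the bound $\|x_{i,t}-x_{i,t-1}\|\le d(\mathcal{X})$ used in the paper suffices in Step~4; second, in the violation analysis the hypergradient error does not enter at all, because (\ref{Lemma5-eq2}) is obtained by dividing the per-round inequality by $\gamma_t$ and uses only $\|\tilde{\nabla}f_{i,t}(x_{i,t},y_{i,t+1},v_{i,t+1})\|\le G_s$, and this matters because an additive $\mathcal{O}(\sqrt{T})$ would not be dominated by $T^{1-\theta_1}$ under Slater's condition when $\theta_1>1/2$.
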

\begin{proof}
	The detailed proof is provided in Appendix~\ref{CCCC}.
\end{proof}

\begin{corollary}\label{cor2}
	Suppose the conditions of Theorem~\ref{thm2} hold. The static network regret and network cumulative constraint violation of Algorithm~\ref{alg.2} satisfy
	\begin{align}
		\mathbb{E}[\text{Net-SReg}(T)] &= \mathcal{O}(T^{\max\{\theta_1,1-\theta_1\}}),\\
		\mathbb{E}[\text{Net-CCV}(T)] &= \mathcal{O}(T^{1-\theta_1/2}).
	\end{align}
	The total number of inner iterations $\mathcal{I}_{net}$ satisfies
	\begin{align}
		\mathbb{E}[\mathcal{I}_{net}]\le \mathcal{O}(T\log (\sqrt{T})+H_{1,T}).
	\end{align}
Furthermore, under Assumption~\ref{ass.slater}, it holds
	\begin{equation}
		\mathbb{E}[\text{Net-CCV}(T)] = \mathcal{O}(T^{1-\theta_1}). 
	\end{equation}
\end{corollary}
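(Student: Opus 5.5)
Corollary~\ref{cor2} follows from Theorem~\ref{thm2} once we observe that its proof actually produces a regret inequality against an \emph{arbitrary} comparator sequence, not only the instantaneous optima $x_t^*$. Concretely, I will go back to the per-round mirror-descent inequality used in Appendix~\ref{CCCC}. That inequality comes from the three-point identity (\ref{three}) applied to (\ref{Algorithm1-eq6}) with a generic comparator $u_t\in\mathcal{X}$. Summed over $t$, and using (\ref{Klip}) to handle the change $\mathcal{D}_\psi(u_{t+1},\cdot)-\mathcal{D}_\psi(u_t,\cdot)$, it produces a term $\sum_t \frac{K}{\alpha_{t+1}}\|u_{t+1}-u_t\|\le \frac{K T^{\theta_1}}{\alpha_0}\sum_t\|u_{t+1}-u_t\|$. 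This is exactly the source of the $T^{\theta_1}P_T$ term in (\ref{thm2ret}). All other contributions are unchanged by the choice of comparator: consensus error, compression error of order $s_t^2C$, the per-round inner error $\delta/\mu_g$ from (\ref{DDDDD39}), the HvP error of order $\rho_H^{M/2}$, and the dual/regularization terms. They therefore give the same $\mathcal{O}(T^{\max\{\theta_1,1-\theta_1\}})$ contribution.

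I will take the constant comparator $u_t\equiv x^*\in\Omega$. The path term then vanishes identically, so $\mathbb{E}[\text{Net-SReg}(T)]=\mathcal{O}(T^{\max\{\theta_1,1-\theta_1\}})$. The one point to check is the constraint-related term. In the dynamic analysis, the term $\varphi_{i,t+1}^T h_{i,t}(x_t^*)\le 0$ is dropped because $x_t^*\in\mathcal{X}_t$. For the static comparator, $x^*\in\Omega\subseteq\mathcal{X}_t$ for every $t$, so the same sign argument applies verbatim. Since $\varphi_{i,t+1}\ge 0$, we get $\varphi_{i,t+1}^T h_{i,t}(x^*)\le 0$.

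The bilevel error terms are evaluated at the iterates $x_{i,t}$, not at the comparator, so their $\mathcal{O}(\sqrt{T}\cdot T\delta)$-type and $T\rho_H^{M/2}$ contributions remain controlled by $\delta=\mathcal{O}(T^{-1/2})$ and the choice of $M$, exactly as in Theorem~\ref{thm2}.

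The remaining statements of the corollary need no new argument. The cumulative constraint violation bounds $\mathcal{O}(T^{1-\theta_1/2})$ and, under Assumption~\ref{ass.slater}, $\mathcal{O}(T^{1-\theta_1})$ do not involve any comparator beyond a feasible point; the Slater case uses $\bar x$. They are therefore identical to (\ref{thm2Cvv11}) and (\ref{thm2Cvv22}). The iteration count bound (\ref{totalNum}) depends only on the algorithm's trajectory and carries over directly.

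The main obstacle is the violation part of the argument. It typically uses the regret inequality with a lower bound $f_t(x_{i,t})-f_t(u_t)\ge -F$, where $F$ is a uniform bound on $f_t$ over $\mathcal{X}$ obtained from the bounded gradients and the compactness of $\mathcal{X}$. I need to confirm that this lower bound does not reintroduce $P_T$. Choosing the constant comparator there as well, or reusing Theorem~\ref{thm2} as stated since that bound is already free of $P_T$, settles it.
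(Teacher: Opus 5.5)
Your proposal is correct and is essentially the argument implicit in the paper: taking the constant comparator $\hat{x}_t\equiv x^*\in\Omega$ in Lemma~\ref{lem.55} and in \eqref{quer} is admissible because $h_t(x^*)\le\mathbf{0}_m$ for every $t$. This choice removes the term $K\sum_{t}\|\hat{x}_{t+1}-\hat{x}_t\|/\alpha_t$ while leaving $Kd(\mathcal{X})/\alpha_T$, $\tilde{\psi}_{13}T=\mathcal{O}(\sqrt{T})$ and the compression/consensus terms unchanged, and the violation and iteration-count claims are verbatim those of Theorem~\ref{thm2}. The worry in your last paragraph does not arise here, since the paper's violation analysis starts from \eqref{Lemma5-eq2} and bounds the hypergradient inner product by $2d(\mathcal{X})G_s$ directly, never invoking a lower bound on $f_t(x_{i,t})-f_t(u_t)$.
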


\begin{remark}
	Theorems~\ref{thm1} and~\ref{thm2} establish a fundamental trade-off between computational efficiency and robustness against environmental non-stationarity. Algorithm~\ref{algo1} benefits from a single-loop structure, demanding one inner-level update and HvP tracking step per iteration. While this ensures a minimal computational burden, its dynamic network regret inevitably depends on the inner-level path variation, yielding the bound $\mathcal{O}(T^{\max\{\theta_1,1-\theta_1\}}+T^{\theta_1 } P_T+H_{1,T}+E_{1,T})$. In highly volatile environments, drastic environmental changes can cause $H_{1,T}$ and $E_{1,T}$ to grow linearly, thereby precluding a sublinear regret guarantee. This theoretical  limitation necessitates the design of Algorithm~\ref{alg.2}. By implementing an adaptive multi-loop tracking mechanism governed by a gradient-norm tolerance condition ($\|\nabla_y g_{i,t}\|\le\delta$),
	 Algorithm~\ref{alg.2} eliminates the regret's dependence on $H_{1,T}$ and $E_{1,T}$, reducing the bound purely to $\mathcal{O}(T^{\max\{\theta_1,1-\theta_1\}}+T^{\theta_1 } P_T)$.  Nonetheless, this mathematical decoupling is achieved at the expense of an increased computational overhead. To satisfy the tolerance condition, the adaptive mechanism dynamically invokes multiple inner iterations per-round, which accumulates a larger number of gradient and HvP evaluations over the horizon. Consequently, while Algorithm~\ref{alg.2} demands a larger computational budget, it achieves better performance under severe environmental variations. 
\end{remark}  %%%%%%%%%%%%%%%%%%%%%

\section{Numerical Example}\label{sec5}
 We consider a multi-agent network comprising $N=50$ agents. The communication topology is characterized by a doubly stochastic mixing matrix $W$, configured with $[W]_{ii}=0.5$ for $i\in [N]$ and $[W]_{i,i+1}=0.5$ for $i=1,\ldots,N-1$, and $[W]_{N,1}=0.5$, with all other entries being zero. The network cooperatively solves a distributed online  bilevel optimization problem formulated as
	\begin{align}
		&   \mathop {\min }\limits_{x \in \mathcal{X}} \;\;\;\textstyle  \frac{1}{N}  \textstyle  \sum_{i = 1}^N {f_{i,t}(x, y_{i,t}^*(x))} , \nonumber\\
		&\textstyle  \;{\rm{s}}{\rm{.t}}{\rm{.  }}\;\;\;\; y_{i,t}^*(x) = \mathop {\arg \min }\limits_{y \in \mathbb{R}^p} g_{i,t}(x,y), \label{example2-2}\\
		&\;\;\;\;\;\;\;\;\;\;\; \textstyle  {B_{i,t}}x - {b_{i,t}} \le {\mathbf{0}_{{m_i}}},\quad \forall i \in [ N ],\quad \forall t \in [ T ], \nonumber
	\end{align} 
where $\mathcal{X} = [-5,5]^q$  with $q=2$, $p=2$  and  $m_i=2$. The outer-level cost function is defined as
$$f_{t}(x,y) = \begin{cases} \frac{1}{2} \Vert{}x - y - c_{t}\Vert{}^2, & \text{if } \Vert{}x - y - c_{t}\Vert{} \le R, \\ R \Vert{}x - y - c_{t}\Vert{} - \frac{1}{2} R^2, & \text{if } \Vert{}x - y - c_{t}\Vert{} > R, \end{cases}$$
where $R=60$.
The inner-level strongly convex cost function is given by \begin{align*}
	{g_{i,t}}(x,y) = \frac{1}{2}(\mu  + \pi  ){\| y \|^2} - \mu {y^T}{H_{i,t}}x, 
\end{align*}
where $\mu = 1$ and $\pi = 0.5$. The bilevel instance can be considered as a stylized hierarchical resource allocation problem over a communication network. Specifically, \(x\) represents the common upper-level control variable, \(y_{i,t}^{\ast}(x)\) represents the local resource allocation induced by this control, and the time-varying parameters \(H_{i,t}\), \(c_{i,t}\), \(B_{i,t}\), and \(b_{i,t}\) capture temporal changes in local operating conditions and capacity limits. In the simulations, we use the Euclidean mirror map $\psi(x) = \frac{1}{2}\Vert{}x\Vert{}^2$.

We design two distinct dynamic environments by manipulating time-varying parameters in  cost functions to artificially govern the inner-level path variation and gradient path variation. 

In a moderate environment, the time-varying  parameters are dynamically generated with a decay factor $t^{-0.5}$, 
\begin{equation}\label{AA51}
\begin{aligned}
	H_{i,t} &= 1.5 I + \big(0.2 + 0.5 t^{-0.5} \sin(0.02 t)\big) \mathbf{1}_{p \times q}, \\
	c_{i,t} &= \begin{bmatrix}
		5 + 10t^{-0.5} \sin(0.015 t) \\
		5+ 10t^{-0.5} \cos(0.015 t)
	\end{bmatrix}.
\end{aligned}
\end{equation}
  To verify the tightened network cumulative constraint violation bounds, we evaluate two specific configurations. For the configuration where Slater's condition holds, we set $b_{i,t}=0.01+\tau_{i,t}$ with $\tau_{i,t}$ uniformly drawn from $[0, 1]$, and randomly generate $B_{i,t}\in[0,1]^{m_i\times q}$. Conversely, for the configuration without Slater's condition, we set $b_{i,t}=0$, randomly generate a nonzero $q$-dimensional vector $a_{i,t}$ whose entries are independently drawn from $[0, 1]$, and set $B_{i,t} = [a_{i,t}, -a_{i,t}]^T$. In this case, set the dynamic step sizes as $\alpha_t = 0.24/\sqrt{t}$, $\gamma_t= 0.4t^{0.5}$, and $s_t = 1.0/t^{1.5}$, and set the fixed parameters as $\beta = 0.4$, $\lambda = 0.5$, and $r_v=40$.

For the algorithmic implementation, both algorithms employ a uniform quantizer $\mathcal{C}(x)= \Delta \text{round}(x / \Delta)$ with $\Delta=1.0$.  For Algorithm~\ref{alg.2}, the adaptive multi-loop tracking mechanism operates with $\alpha_t = 0.06/\sqrt{t}$,  $\beta=0.4$ and $\delta=10^{-3}$. Furthermore, its multi-step HvP estimation executes for a fixed $M=10$ steps per-round with $\lambda=0.5$ and $r_v=40$. For comparison, we also implement the variants FSOBO-PD and AOBO-PD of the centralized online bilevel algorithms in~\cite{Onlinelocalregret}. Since these algorithms are originally developed without time-varying inequality constraints, we augment them with the same dual update in (\ref{Algorithm1-eq4}) so that all algorithms are evaluated under the constrained problem in (\ref{example2-2}).

\begin{figure}[htbp]
	\centering
	\includegraphics[width=0.7\linewidth]{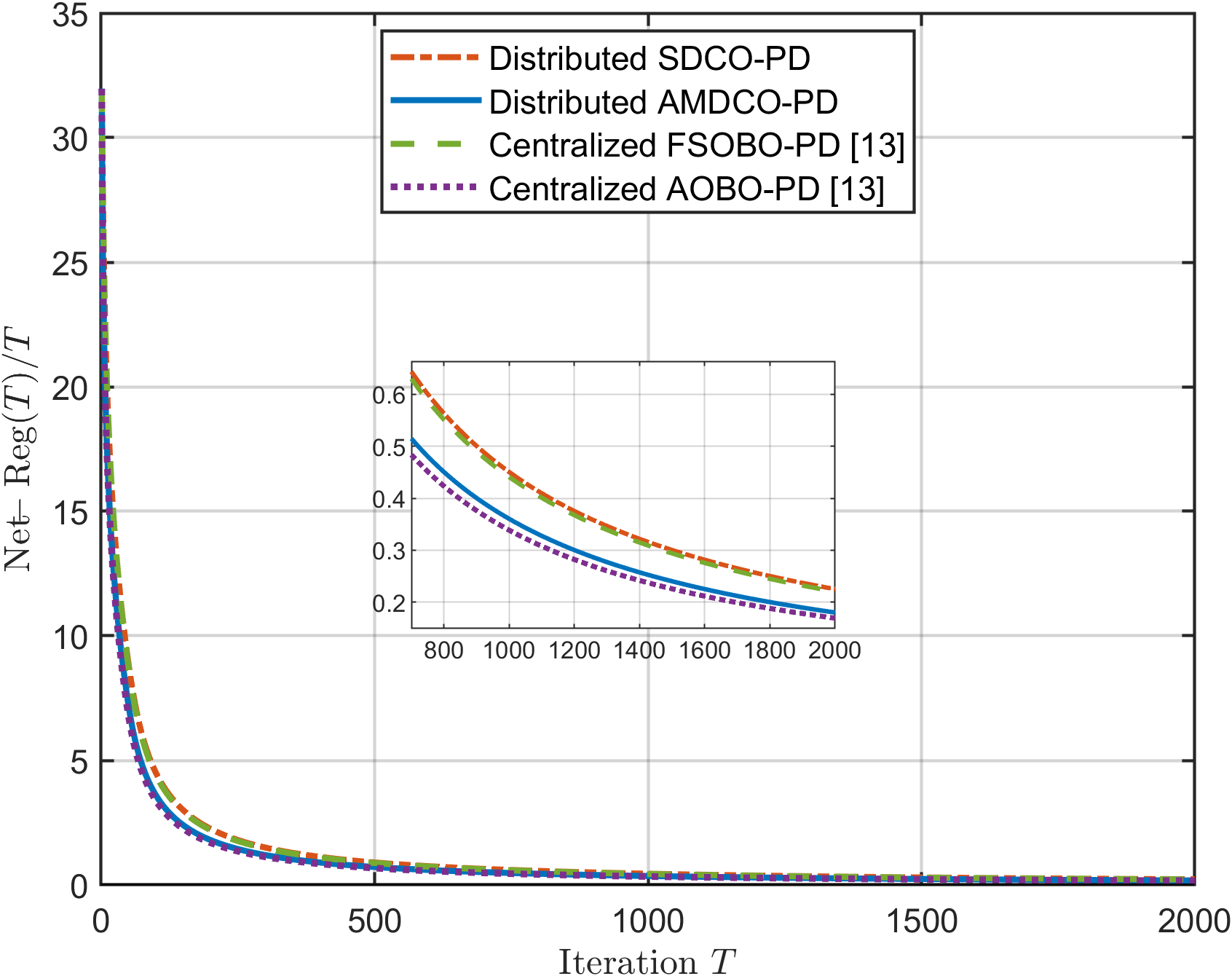}
	\caption{Evolution of the time-averaged dynamic network regret under the moderate environment.}
	\label{Fig11}
\end{figure}

\begin{figure}[htbp]
	\centering
	\includegraphics[width=0.7\linewidth]{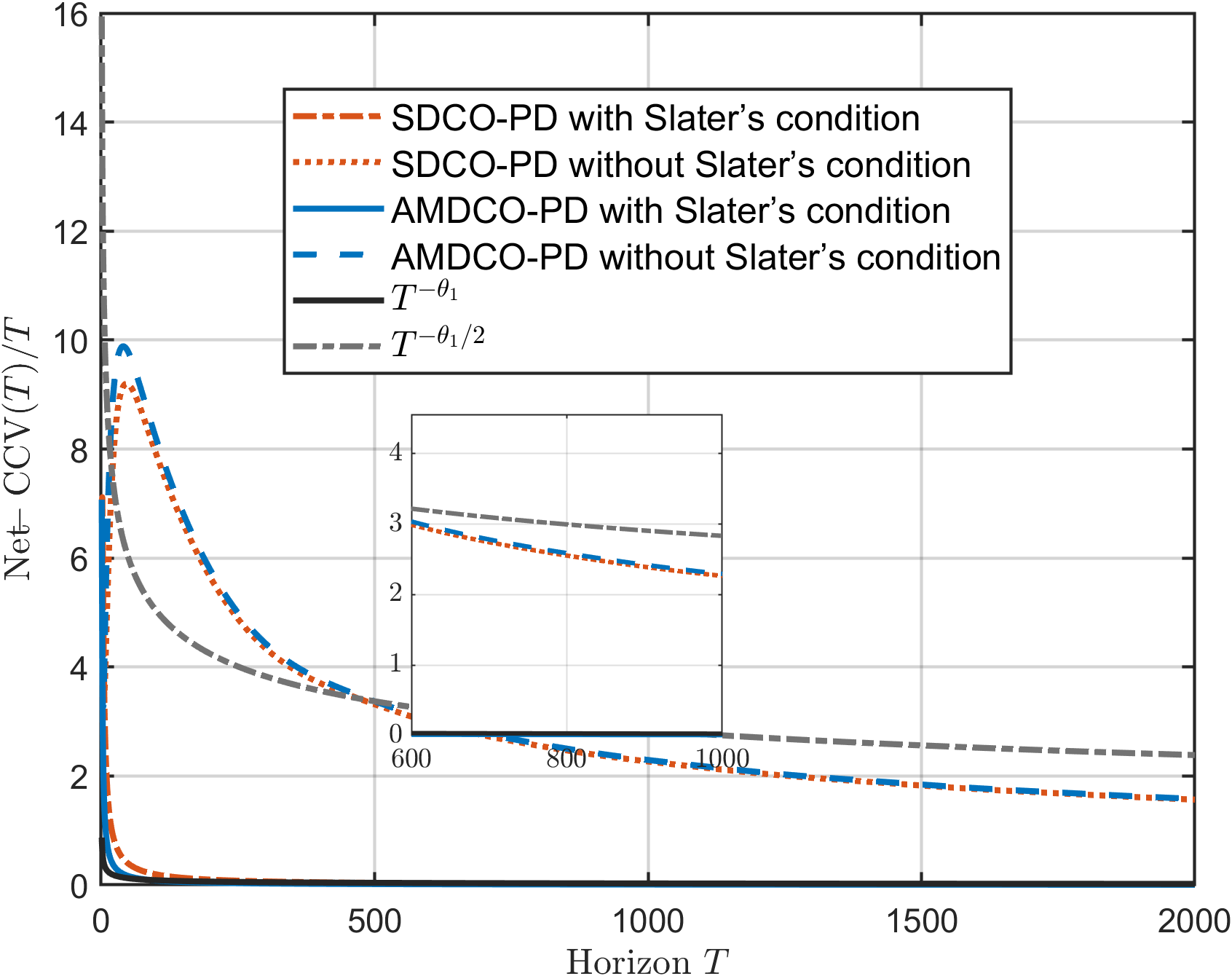}
	\caption{Evolution of the time-averaged network cumulative constraint violation  under the moderate environment.}
	\label{Fig22}
\end{figure}

\begin{figure}[htbp]
	\centering
	\includegraphics[width=0.7\linewidth]{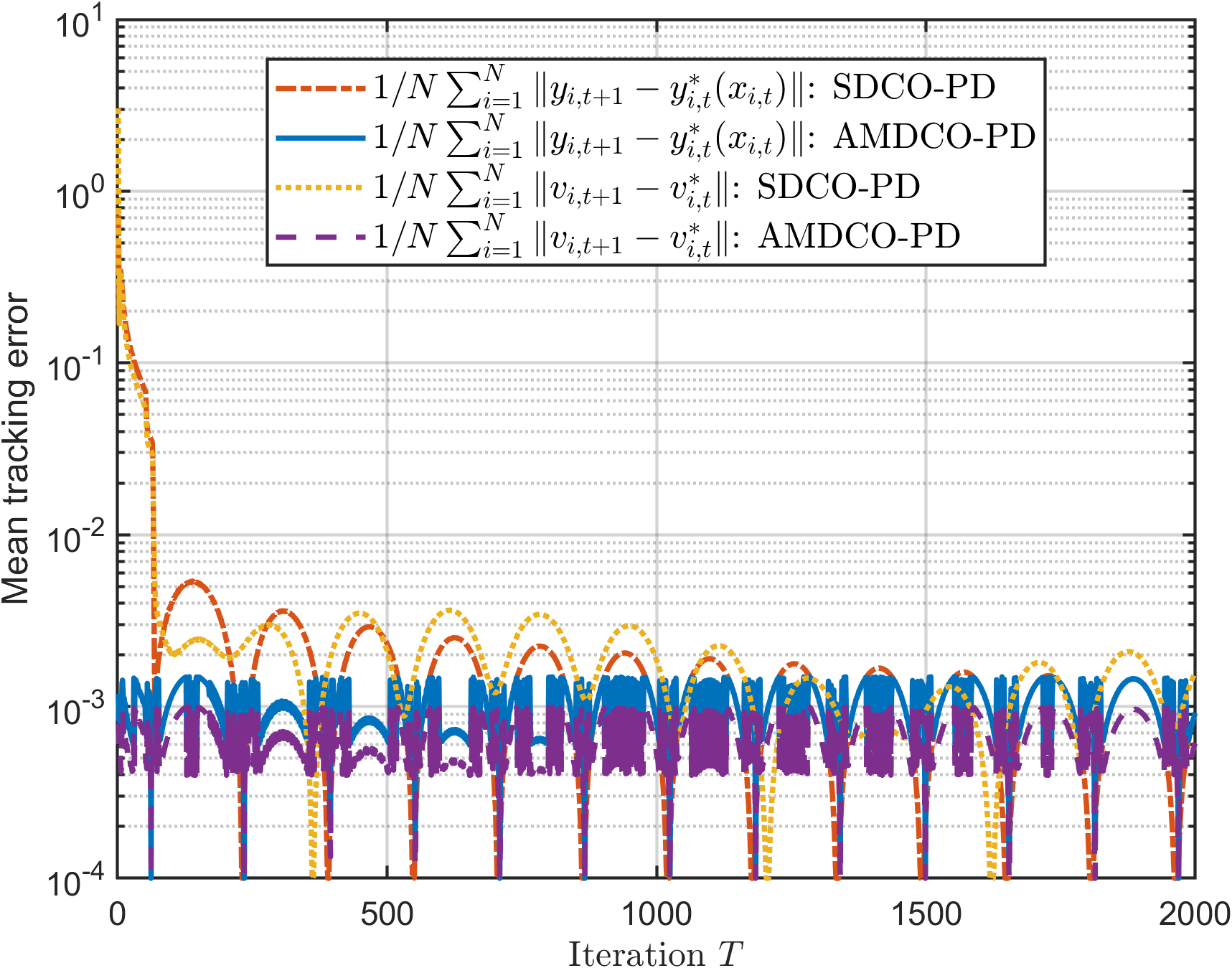}
	\caption{Evolution of the mean inner-level tracking errors $1/N\sum_{i=1}^{N}\|y_{i,t+1} - y_{i,t}^*(x_{i,t})\|$ and $1/N\sum_{i=1}^{N}\|v_{i,t+1} - v_{i,t}^*\|$ under the moderate environment.}
	\label{Fig33}
\end{figure}

\begin{figure}[htbp]
	\centering
	\includegraphics[width=0.7\linewidth]{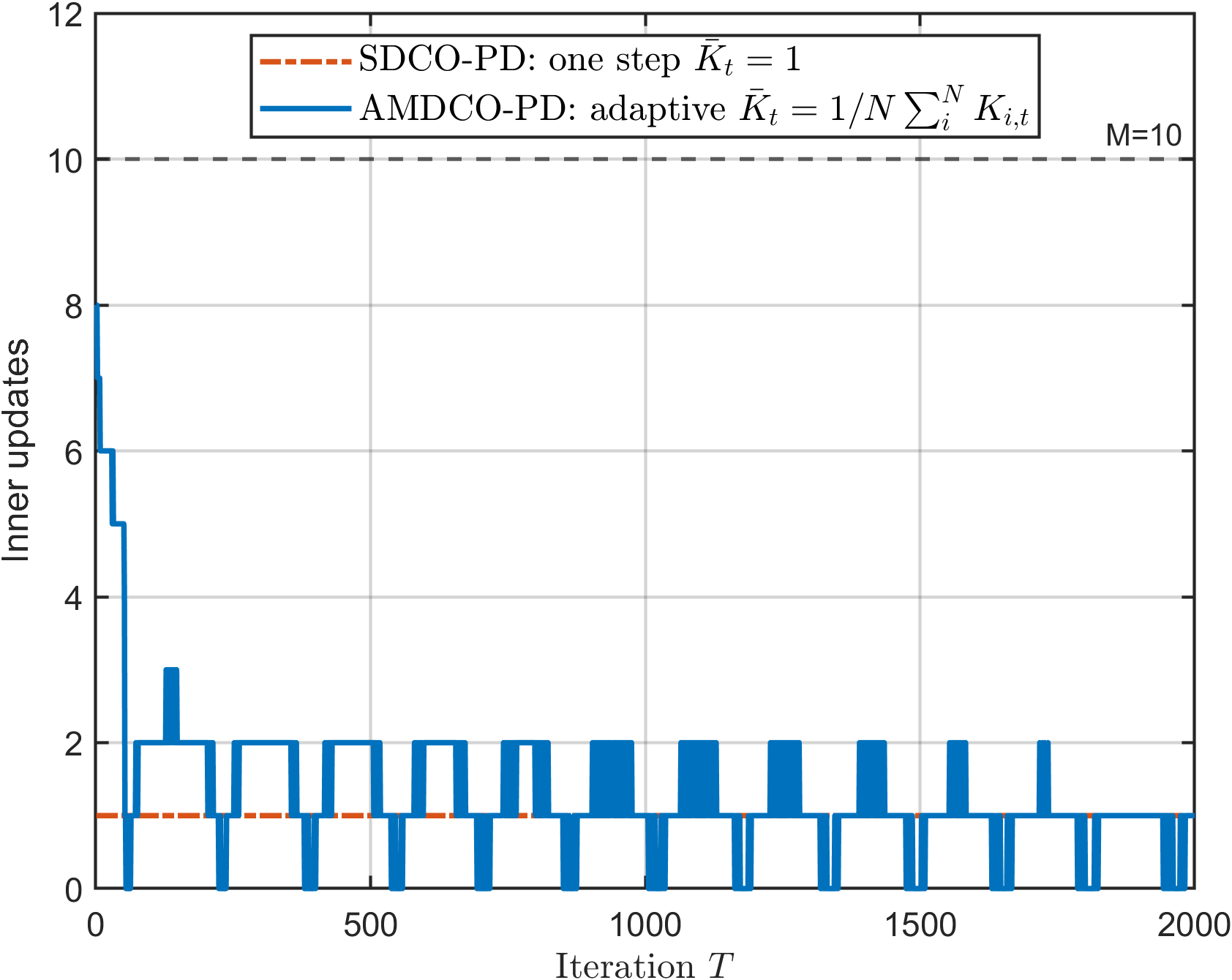}
	\caption{Average number of inner-gradient updates  under the extreme environment.}
	\label{Fig44}
\end{figure}

The numerical results in the moderate environment are illustrated in Fig.~\ref{Fig11} and Fig.~\ref{Fig22}. In particular, Fig.~\ref{Fig11} depicts the evolution of the time-averaged network dynamic regret \(\mathrm{Net\mbox{-}Reg}(T)/T\), together with the centralized baselines algorithms adapted from~\cite{Onlinelocalregret}. It is observed that all algorithms drive the time-averaged network dynamic regret toward zero. Although the centralized baselines achieve slightly lower regret, the proposed distributed algorithms exhibit comparable decay trends under compressed communication. Moreover, AMDCO-PD attains a slightly lower regret trajectory than SDCO-PD, which is consistent with the improved inner-level tracking accuracy of its adaptive mechanism. 
Fig.~\ref{Fig22} depicts the evolution of the time-averaged network cumulative constraint violation \(\mathrm{Net\mbox{-}CCV}(T)/T\).  
Under Slater’s condition, the time-averaged network cumulative constraint violations decay consistently with the \(T^{-\theta_1}\) reference rate. Without Slater’s condition, the trajectories exhibit the slower \(T^{-\theta_1/2}\) decay predicted by Theorems~\ref{thm1} and~\ref{thm2}.
This confirms that the time-averaged network cumulative constraint violation of SDCO-PD algorithm and AMDCO-PD algorithm can be improved under Slater's condition.

To explicitly highlight the theoretical vulnerabilities of single-loop architectures, we further construct an extreme environment aimed at severely amplifying the inner-level variations. This is achieved by applying a slower decay factor of $t^{-0.25}$ alongside high-frequency and large-amplitude perturbations, generated as
\begin{equation}\label{AA52}
	\begin{aligned}
		H_{i,t} &= 1.5 I + \big(0.2 + 5.0 t^{-0.25} \sin(0.3 t)\big) \mathbf{1}_{p \times q}, \\
		c_{i,t} &= \begin{bmatrix}
			5 + 40 t^{-0.25} \sin(0.4 t) \\
			5 + 40 t^{-0.25} \cos(0.4 t)
		\end{bmatrix}.
	\end{aligned}
\end{equation}
Compared to the moderate setup in (\ref{AA51}), the parameters in (\ref{AA52}) feature a significantly slower decay rate and much higher-frequency oscillations. This configuration intentionally simulates a highly volatile and severe dynamic environment. 

To elucidate the structural distinction between the single-loop and multi-loop mechanisms, we analyze the mean tracking errors $1/N\sum_{i=1}^{N}\|y_{i,t+1}-y_{i,t}^*(x_{i,t})\|$ and $1/N\sum_{i=1}^{N}\|v_{i,t+1}-v_{i,t}^*\|$ alongside the inner-loop updates per iteration across both environments. Fig.~\ref{Fig33} and Fig.~\ref{Fig44} depict the moderate environment, while Fig.~\ref{Fig77} and Fig.~\ref{Fig88} characterize the extreme environment. In Fig.~\ref{Fig33}, while the SDCO-PD algorithm maintains bounded tracking errors, its precision is bottlenecked by the single-step update mechanism. Conversely, the AMDCO-PD algorithm achieves lower tracking errors. As depicted in Fig.~\ref{Fig44}, compared to the fixed single-step inner update of the SDCO-PD algorithm, the AMDCO-PD algorithm dynamically invokes multiple inner iterations to enforce the gradient-norm tolerance condition $\delta$, alongside a fixed  $M=10$  steps for HvP estimation.  It is worth noting that under the moderate environment, the performance of both algorithms is relatively close. As depicted in Fig.~\ref{Fig33} and Fig.~\ref{Fig44}, there is no significant difference in their average tracking errors or cumulative inner updates.

In Fig.~\ref{Fig77}, the artificially inflated environmental variations cause severe tracking degradation in the SDCO-PD algorithm, manifesting as high-amplitude oscillations in the mean tracking error.  In contrast, the AMDCO-PD algorithm maintains a similar tracking accuracy to that in the moderate environment of Fig.~\ref{Fig33}, even under more severe dynamic variations. As shown in Fig.~\ref{Fig88}, the adaptive mechanism automatically scales the computational budget, frequently executing 5 to 10 inner updates per iteration. This dynamic allocation yields the stable and high-precision tracking trajectory observed in Fig.~\ref{Fig77}. These results illustrate that the adaptive mechanism maintains small per-round inner-level and HvP tracking errors under larger temporal variations, which is consistent with the error-control mechanism underlying Theorem~\ref{thm2}.

%These results numerically substantiate Theorem~\ref{thm2}, confirming that the AMDCO-PD algorithm successfully decouples the dynamic network regret from inner-level path variation at the expense of an adaptively allocated computational budget.

\begin{figure}[htbp]
	\centering
	\includegraphics[width=0.7\linewidth]{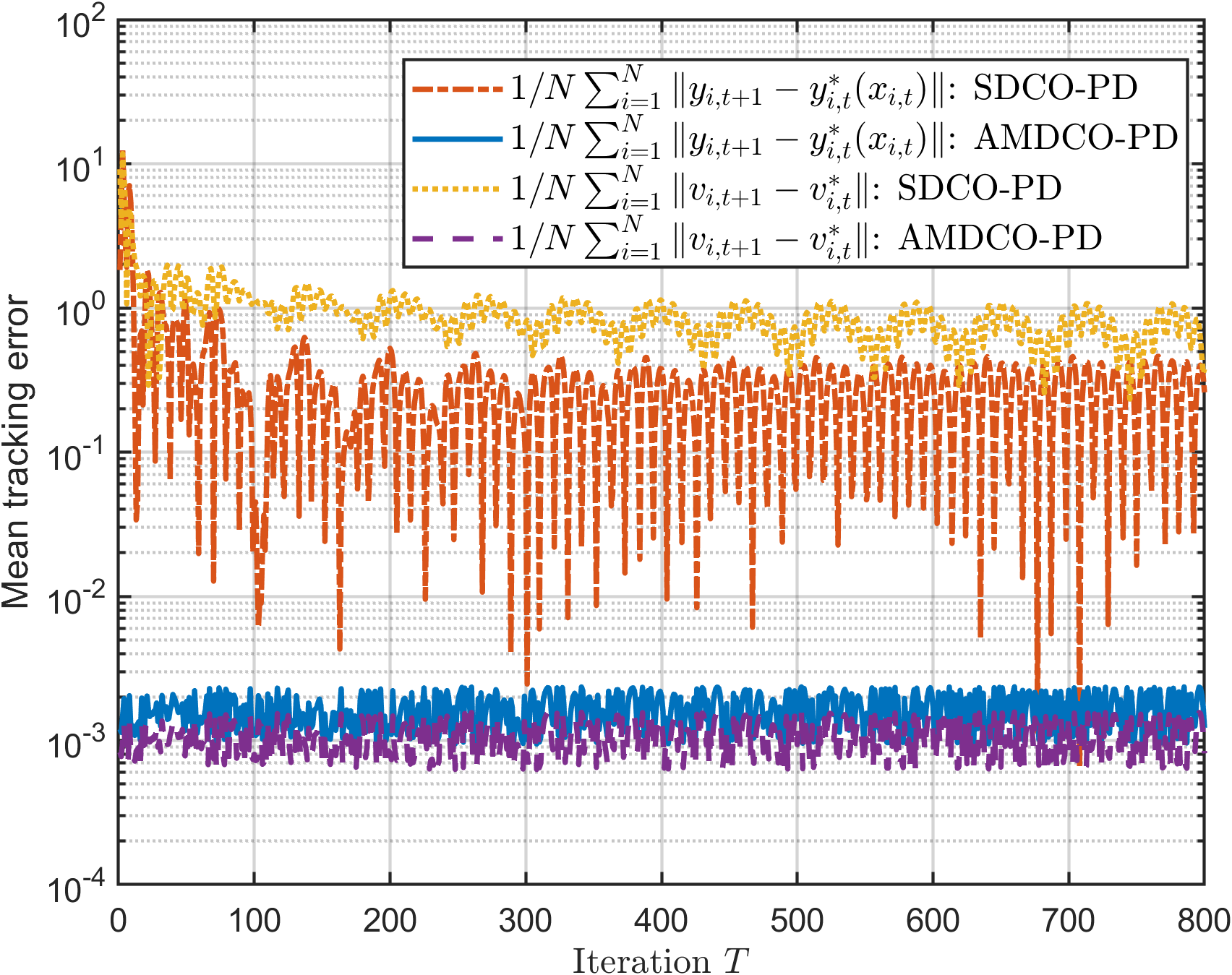}
	\caption{Evolution  of the mean inner-level tracking errors $1/N\sum_{i=1}^{N}\|y_{i,t+1} - y_{i,t}^*(x_{i,t})\|$ and $1/N\sum_{i=1}^{N}\|v_{i,t+1} - v_{i,t}^*\|$  under the extreme environment. }
	\label{Fig77}
\end{figure}

\begin{figure}[htbp]
	\centering
	\includegraphics[width=0.7\linewidth]{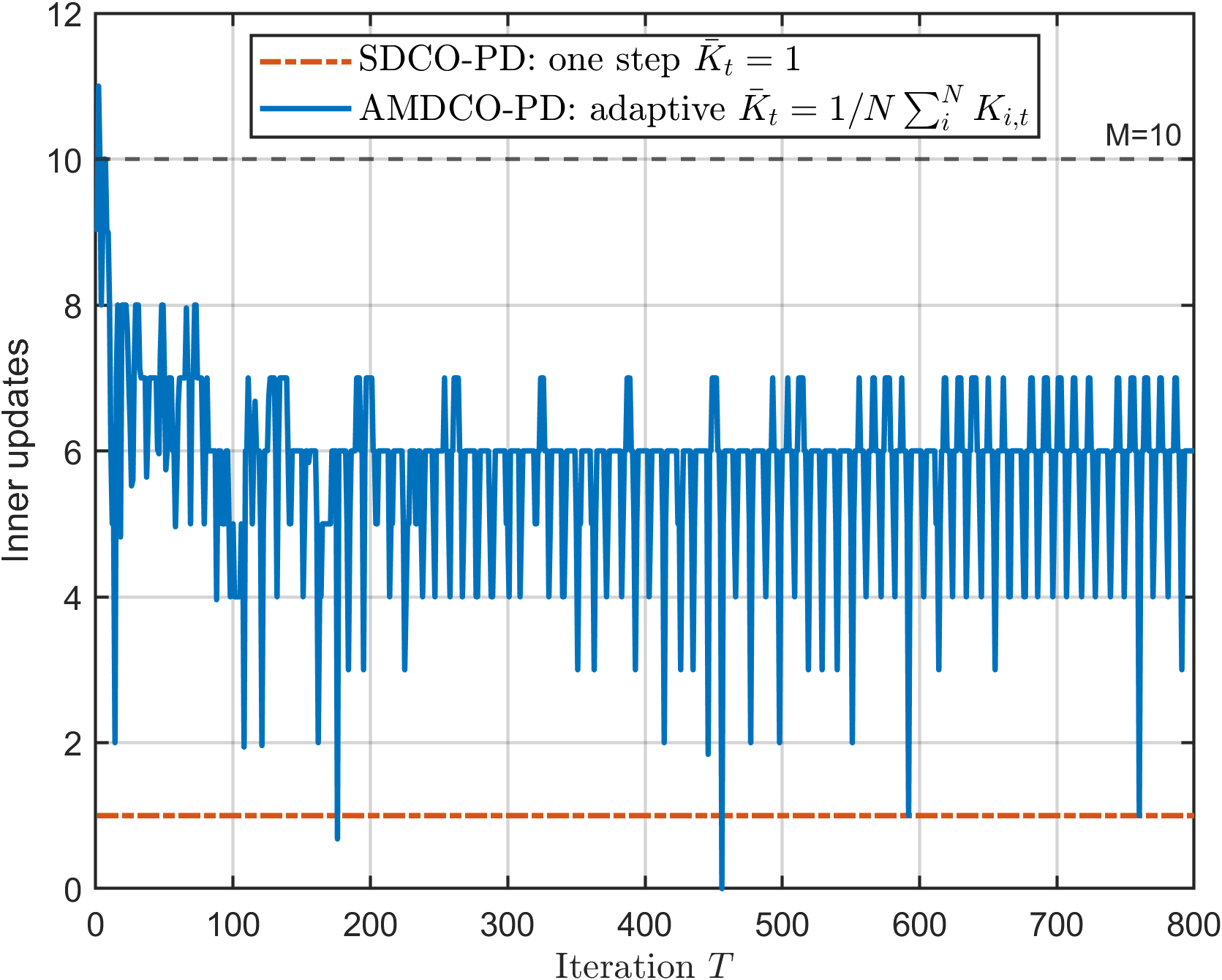}
	\caption{Number of inner updates per iteration under the extreme environment.}
	\label{Fig88}
\end{figure}

\section{Conclusions}\label{sec6}
This paper investigated the distributed online  bilevel optimization problem subject to time-varying inequality constraints. We proposed two communication-efficient primal--dual algorithms. First, a single-loop distributed algorithm was developed by integrating an iterative approximation approach with communication compression. We analyzed its dynamic network regret alongside the network cumulative constraint violation that is tightened under Slater’s condition. Furthermore, to eliminate the regret's dependence on the inner-level and gradient path variation, we introduced a novel adaptive multi-loop algorithm. By dynamically adjusting the number of inner-level updates based on a gradient-norm tolerance condition, this algorithm achieved an improved dynamic network regret bound that relies solely on the outer-level path variation. Future research will explore extending these frameworks to fully asynchronous communication protocols and online bilevel optimization under zeroth-order feedback.

\appendix
\subsection{Useful Lemmas}
We first present auxiliary lemmas. These preliminary properties are subsequently synthesized to derive the dynamic network regret and network cumulative constraint violation bounds.

\begin{lemma}[{\cite[Lemma 1]{yang2023achieving}}]\label{lemVM}
	If Assumptions~\ref{ass:outer} and \ref{ass:inner} hold, then for any $i\in [N]$, $t\in [T]$, $x \in \mathcal{X}$ and $y \in \mathbb{R}^p$, it holds that
	\begin{equation}\label{equ:VM}
	\textstyle	\|v_{i,t}(x, y)\|^2 \le \frac{C_{f,y}^2}{\mu_g^2}.
	\end{equation}
\end{lemma}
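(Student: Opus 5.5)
The bound follows directly from the definition $v_{i,t}(x,y) = [\nabla_{yy}^2 g_{i,t}(x,y)]^{-1}\nabla_y f_{i,t}(x,y)$, using submultiplicativity of the operator norm:
\[
\|v_{i,t}(x,y)\| \le \big\|[\nabla_{yy}^2 g_{i,t}(x,y)]^{-1}\big\|\,\|\nabla_y f_{i,t}(x,y)\|.
\]
The proof therefore reduces to bounding the two factors separately and squaring.

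\emph{Second factor.} Assumption~\ref{ass:outer}(ii) gives the uniform bound $\|\nabla_y f_{i,t}(x,y)\|\le C_{f,y}$ for every $(x,y)$.

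\emph{First factor.} By Assumption~\ref{ass:inner}(i), $g_{i,t}(x,\cdot)$ is twice continuously differentiable and $\mu_g$-strongly convex for each fixed $x$. Hence $\nabla_{yy}^2 g_{i,t}(x,y)\succeq \mu_g \mathbf{I}$ at every $y\in\mathbb{R}^p$, not only at $y_{i,t}^*(x)$. To see this, differentiate the strong monotonicity inequality
\[
\langle \nabla_y g_{i,t}(x,y+sd)-\nabla_y g_{i,t}(x,y),\, sd\rangle \ge \mu_g s^2\|d\|^2 .
\]
Dividing by $s^2$ and letting $s\to0$ gives $d^T\nabla_{yy}^2 g_{i,t}(x,y)d\ge\mu_g\|d\|^2$. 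The Hessian is symmetric with smallest eigenvalue at least $\mu_g$, so it is invertible and its inverse has spectral norm at most $1/\mu_g$.

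Multiplying the two factors gives $\|v_{i,t}(x,y)\|\le C_{f,y}/\mu_g$, and squaring yields the claim.

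The only point needing care is that the Hessian lower bound must hold at arbitrary $y$, not just at the inner optimum. The derivative argument above secures this. No other assumptions are required: $x\in\mathcal{X}$ plays no role, and Lipschitz continuity is not used.
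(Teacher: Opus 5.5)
Your proof is correct. The paper gives no proof of its own and simply cites \cite[Lemma 1]{yang2023achieving}, whose standard argument is the one you give: $\mu_g$-strong convexity of the $C^2$ function $g_{i,t}(x,\cdot)$ gives $\nabla_{yy}^2 g_{i,t}(x,y)\succeq\mu_g\mathbf{I}$ at every $y$, so $\|[\nabla_{yy}^2 g_{i,t}(x,y)]^{-1}\|\le 1/\mu_g$, and together with $\|\nabla_y f_{i,t}\|\le C_{f,y}$ this yields $\|v_{i,t}(x,y)\|\le C_{f,y}/\mu_g$. Your check that the Hessian bound holds at arbitrary $y$, and not only at $y_{i,t}^*(x)$ as in the problem formulation, is exactly what makes the bound valid for all $y\in\mathbb{R}^p$.
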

\begin{lemma}[{\cite[Proposition 1]{niu2025lopa}}]\label{lem.11}
	Suppose Assumptions~\ref{ass:outer} and \ref{ass:inner} hold. Let $\bar{\nabla}f_{i,t}(x,y) = \nabla_x f_{i,t}(x,y) - \nabla_{xy}^2 g_{i,t}(x,y)v_{i,t}(x,y)$ be a surrogate of the local hypergradient $\nabla f_{i,t}(x,y_{i,t}^*(x))$. Then given any $x, x' \in \mathcal{X}$ and $y, y' \in \mathbb{R}^p$, it holds that for all $i \in [N]$ and $t \in [T]$:
	\begin{align}
		&\textstyle\| y_{i,t}^*(x) - y_{i,t}^*(x') \| \le L_y \| x - x' \|, \label{eq:lip_y_star} \\
		&\textstyle \| v_{i,t}(x, y) - v_{i,t}(x', y') \| \le L_v \big(\| x - x' \| + \| y - y' \|\big), \label{eq:lip_v} \\
		&\textstyle \| v_{i,t}^*(x) - v_{i,t}^*(x') \| \le L_{v^*} \| x - x' \|, \label{eq:lip_v_star} \\
		&\textstyle \| \bar{\nabla}f_{i,t}(x, y) - \bar{\nabla}f_{i,t}(x', y') \| \le L_{\bar{f}} \big(\| x - x' \| + \| y - y' \|\big), \label{eq:lip_surrogate} \\
		&\textstyle\| \nabla f_t(x) - \nabla f_t(x') \| \le L \| x - x' \|, \label{eq:lip_hypergradient}
	\end{align}
where the Lipschitz constants are defined as follows: $L_y = C_{g,xy} / \mu_g$, $L_v = L_{f,y} / \mu_g + C_{f,y} L_{g,yy} / \mu_g^2$, $L_{v^*} = L_v(1 + L_y)$, $L_{\bar{f}} = L_{f,x} + C_{g,xy} L_v + C_{f,y} L_{g,xy} / \mu_g$, and $L = L_{\bar{f}}(1 + L_y)$.
\end{lemma}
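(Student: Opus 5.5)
The plan is to derive all five bounds from the implicit-function representation \eqref{y_star} together with two elementary matrix facts under Assumptions~\ref{ass:outer} and~\ref{ass:inner}. The first fact is that $\|[\nabla^2_{yy}g_{i,t}(x,y)]^{-1}\|\le 1/\mu_g$ for all $(x,y)$, by $\mu_g$-strong convexity. The second is the resolvent identity $A^{-1}-A'^{-1}=A^{-1}(A'-A)A'^{-1}$. Throughout, joint Lipschitz continuity in $(x,y)$ is used in the form $\|(x,y)-(x',y')\|\le\|x-x'\|+\|y-y'\|$, which is why all constants multiply the sum of the two distances.

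\textbf{Inner solution and HvP map.} For \eqref{eq:lip_y_star}, $y_{i,t}^*(\cdot)$ is differentiable on $\mathbb{R}^q$ by the implicit function theorem. By \eqref{y_star}, $\|\nabla y_{i,t}^*(x)\|\le C_{g,xy}/\mu_g$, using Assumption~\ref{ass:inner}(iii). Integrating the derivative along the segment from $x'$ to $x$ then gives $L_y=C_{g,xy}/\mu_g$.

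For \eqref{eq:lip_v}, I will write $A=\nabla_{yy}^2g_{i,t}(x,y)$, $b=\nabla_y f_{i,t}(x,y)$, and $A',b'$ for the same quantities at $(x',y')$. Then
\[
v_{i,t}(x,y)-v_{i,t}(x',y')=A^{-1}(b-b')+A^{-1}(A'-A)A'^{-1}b'.
\]
The first term is bounded by $(L_{f,y}/\mu_g)(\|x-x'\|+\|y-y'\|)$. The second is bounded by $(C_{f,y}L_{g,yy}/\mu_g^2)(\|x-x'\|+\|y-y'\|)$, using $\|b'\|\le C_{f,y}$. Adding the two gives $L_v$. Then \eqref{eq:lip_v_star} follows by composition: substitute $y=y_{i,t}^*(x)$ and $y'=y_{i,t}^*(x')$ into \eqref{eq:lip_v} and apply \eqref{eq:lip_y_star}, which yields $L_{v^*}=L_v(1+L_y)$.

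\textbf{Surrogate hypergradient.} For \eqref{eq:lip_surrogate}, I split the difference of $\bar\nabla f_{i,t}$ into two parts. The first is $\nabla_xf_{i,t}(x,y)-\nabla_xf_{i,t}(x',y')$, which contributes $L_{f,x}$. The second is
\[
\nabla^2_{xy}g_{i,t}(x,y)\,(v-v')+\big(\nabla^2_{xy}g_{i,t}(x,y)-\nabla^2_{xy}g_{i,t}(x',y')\big)v',
\]
where $v=v_{i,t}(x,y)$ and $v'=v_{i,t}(x',y')$. Its first piece is bounded by $C_{g,xy}L_v(\cdots)$ using \eqref{eq:lip_v}. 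Its second piece is bounded by $L_{g,xy}(C_{f,y}/\mu_g)(\cdots)$ using Lemma~\ref{lemVM}. Summing recovers $L_{\bar f}$.

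\textbf{Exact hypergradient.} For \eqref{eq:lip_hypergradient}, note from \eqref{eq:hypergradient} and \eqref{y_star} that the exact local hypergradient equals $\bar\nabla f_{i,t}(x,y_{i,t}^*(x))$. The transpose convention in \eqref{y_star} does not affect norms. Applying \eqref{eq:lip_surrogate} with $y=y_{i,t}^*(x)$ and $y'=y_{i,t}^*(x')$, and then \eqref{eq:lip_y_star}, gives $L_{\bar f}(1+L_y)$ for each $i$. Averaging over $i\in[N]$ preserves this constant for $\nabla f_t$.

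\textbf{Main obstacle.} There is no real analytical obstacle here. The only care needed is to invoke the implicit function theorem on all of $\mathbb{R}^q$ (or on a convex neighborhood of $\mathcal{X}$), so that the mean-value integration in the first step is legitimate. One must also check that the uniform bound $\|v'\|\le C_{f,y}/\mu_g$ is applied at arbitrary $y$, which is exactly what Lemma~\ref{lemVM} provides.
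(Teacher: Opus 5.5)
Your proposal is correct and reproduces exactly the stated constants $L_y$, $L_v$, $L_{v^*}$, $L_{\bar f}$ and $L$. The paper gives no proof of its own here and simply imports the result from \cite[Proposition 1]{niu2025lopa}, so your derivation is a valid self-contained substitute for that citation: the implicit-function bound on $\nabla y_{i,t}^*$, the resolvent identity for $v_{i,t}$, the product split of $\bar{\nabla}f_{i,t}$ via Lemma~\ref{lemVM}, and composition with \eqref{eq:lip_y_star} for $v_{i,t}^*$ and $\nabla f_t$.
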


\begin{lemma}\label{lem.22}
	Suppose Assumption~\ref{ass:network_fixed} holds. For all $i, j \in [N]$ and $t \in \mathbb{N}_+$, the sequences $\{x_{i,t}\}$ generated by Algorithm~\ref{algo1} or Algorithm~\ref{alg.2} satisfy
	\begin{align}
	&\textstyle 	\frac{1}{N} \sum_{t=1}^{T} \sum_{i=1}^{N} \sum_{j=1}^{N} \|x_{i,t} - x_{j,t}\|\nonumber\\
	 & \quad \quad \textstyle \leq N\vartheta_1 + \vartheta_2 \sum_{t=1}^{T} \sum_{i=1}^{N} \|\varepsilon_{i,t}^z\|, \label{Cons11}\\
		&\textstyle \frac{1}{N} \sum_{t=1}^{T} \sum_{i=1}^{N} \sum_{j=1}^{N} \|x_{i,t} - x_{j,t}\|^2 \nonumber\\
		&\qquad \textstyle \leq \vartheta_3 + \textstyle \vartheta_4 \sum_{t=1}^{T} \sum_{i=1}^{N} \|\varepsilon_{i,t}^z\|^2,\label{Cons22}
	\end{align}
	where $\tau = (1 - \underline{w}/(4N^2))^{-2} > 1$, $\sigma = 1 - \underline{w}/(4N^2) \in (0,1)$, $\varepsilon_{i,t}^z = \hat{z}_{i,t+1} - x_{i,t}$, $\vartheta_1 = 2\tau/(\sigma(1-\sigma)) \sum_{j=1}^{N} \|\hat{z}_{j,1}\|$, $\vartheta_2 = 4 + 2N\tau/(1-\sigma)$, $\vartheta_3 = 16N\tau^2/(\sigma^2(1-\sigma^2)) \big(\sum_{i=1}^{N} \|\hat{z}_{i,1}\|\big)^2$, and $\vartheta_4 = 16N^2\tau^2/(1-\sigma)^2 + 32$.
\end{lemma}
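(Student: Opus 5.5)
The idea is to write the consensus dynamics in stacked form and use the standard geometric-mixing bound for products of $W$. Let $\hat{z}_t=\mathrm{col}(\hat z_{1,t},\dots,\hat z_{N,t})$ and $x_t=\mathrm{col}(x_{1,t},\dots,x_{N,t})$, so that (\ref{Algorithm1-eq3}) reads $x_t=(W\otimes\mathbf I)\hat z_t$. With the perturbation $\varepsilon^z_{i,t}=\hat z_{i,t+1}-x_{i,t}$ this gives the linear recursion
\[
x_{t+1}=(W\otimes \mathbf I)\hat z_{t+1}=(W\otimes\mathbf I)x_t+(W\otimes\mathbf I)\varepsilon_t^z .
\]
Unrolling it yields
\[
x_{t}=(W^{t}\otimes\mathbf I)\hat z_1+\sum_{s=1}^{t-1}(W^{t-s}\otimes\mathbf I)\varepsilon^z_s .
\]
Because $W$ is doubly stochastic, the network average is $\bar x_t=\frac1N\sum_i\hat z_{i,1}+\sum_{s<t}\frac1N\sum_j\varepsilon^z_{j,s}$. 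Nothing here depends on which algorithm produced $\hat z$, so the argument covers both Algorithm~\ref{algo1} and Algorithm~\ref{alg.2}.

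The key ingredient is the classical bound for doubly stochastic matrices on a strongly connected graph with entries bounded below by $\underline w$ (Nedić--Olshevsky type). It states that $|[W^{k}]_{ij}-1/N|\le \tau\sigma^{k}$, with $\sigma=1-\underline w/(4N^2)$ and $\tau=\sigma^{-2}$; these are exactly the constants in the statement. Applying it, the disagreement of each agent from the average satisfies
\[
\|x_{i,t}-\bar x_t\|\le \tau\sigma^{t}\sum_j\|\hat z_{j,1}\|+\tau\sum_{s=1}^{t-1}\sigma^{t-s}\sum_j\|\varepsilon^z_{j,s}\|+\|\text{last-step term}\| .
\]
The last-step term, where the power of $W$ is small, is bounded separately by $2\sum_j\|\varepsilon^z_{j,t-1}\|$-type quantities. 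These boundary terms are what generate the additive constants $4$ and $32$ in $\vartheta_2,\vartheta_4$.

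Then I use $\|x_{i,t}-x_{j,t}\|\le\|x_{i,t}-\bar x_t\|+\|x_{j,t}-\bar x_t\|$, average over $i,j$, and sum over $t$. The geometric series gives $\sum_t\tau\sigma^t\le\tau/(\sigma(1-\sigma))$ after the index shift. The convolution term is handled by exchanging the order of summation, $\sum_t\sum_{s<t}\sigma^{t-s}a_s\le\frac{1}{1-\sigma}\sum_s a_s$, which produces $2N\tau/(1-\sigma)$. Together these give (\ref{Cons11}) with the stated $\vartheta_1,\vartheta_2$.

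For (\ref{Cons22}) I square the same disagreement bound and use $(a+b+c)^2\le 4a^2+4b^2+2c^2$-type splits. The initial term yields $\sum_t\sigma^{2t}\le 1/(\sigma^2(1-\sigma^2))$ times $(\sum_j\|\hat z_{j,1}\|)^2$. For the convolution term I apply Cauchy--Schwarz in the form $(\sum_s\sigma^{t-s}a_s)^2\le\frac{1}{1-\sigma}\sum_s\sigma^{t-s}a_s^2$, and then $(\sum_j\|\varepsilon_{j,s}\|)^2\le N\sum_j\|\varepsilon_{j,s}\|^2$. Exchanging the sums once more gives $N^2\tau^2/(1-\sigma)^2$ up to the numeric factor $16$.

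The main obstacle is only the bookkeeping of constants, in particular matching the factors $16$ and $32$ and the treatment of the final, unmixed step, where the geometric bound for $W^k$ is applied only for $k\ge1$. The analysis itself is routine once the perturbed consensus recursion is written down.
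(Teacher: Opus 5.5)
Your proposal is correct and follows essentially the same route as the paper, which defers to the derivation in \cite{Xinlei2025}. That derivation unrolls the perturbed consensus recursion $x_{t+1}=(W\otimes\mathbf I)(x_t+\varepsilon_t^z)$, applies the bound $|[W^k]_{ij}-1/N|\le\tau\sigma^k$ with $\tau=\sigma^{-2}$, and sums the geometric series, using Cauchy--Schwarz for the squared version. The additive $4$ and $32$ in $\vartheta_2,\vartheta_4$ come from handling the final step separately. One small note: your unrolled form only involves $W^{t-s}$ with $t-s\ge 1$, so the geometric bound already covers that last step. Listing it separately on top of the sum up to $s=t-1$ just double-counts, which is harmless because the constants you obtain stay no larger than $\vartheta_1,\dots,\vartheta_4$.
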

\begin{proof}
The proof follows from the similar derivation as in~\cite{Xinlei2025}.
\end{proof}

\begin{lemma}\label{lem.55}
	Suppose Assumptions~\ref{ass:network_fixed}--\ref{ass.mir} and \ref{ass.cpmpress} hold. For all $i \in [N]$, let $\{x_{i,t}\}$ be the sequences generated by Algorithm~\ref{algo1} or Algorithm~\ref{alg.2}  with $\gamma_t = \gamma_0 / \alpha_t$, where $\gamma_0 \in (0, \eta/ (4G_h^2)]$ is a constant. Then, for any $T \in \mathbb{N}_+$ and $\hat{x}_{[T+1]} = \{\hat{x}_t\}_{t=1}^{T+1}$ satisfying $\hat{x}_t \in \mathcal{X}$, $h_t(\hat{x}_t)\le 0_m$, it holds that
	\begin{align}\label{Lemma5-eq1}
		&\textstyle \frac{1}{N}\sum_{i = 1}^N \sum_{t = 1}^T \mathbb{E}\left[ \langle \tilde{\nabla}f_{i,t}(x_{i,t},y_{i,t+1},v_{i,t+1}), x_{i,t} - \hat{x}_t \rangle\right]   \nonumber\\
		&\le \textstyle\frac{2G_s^2}{\eta}\sum_{t = 1}^T \alpha_t + \frac{1}{N}\sum_{i = 1}^N \sum_{t = 1}^T \mathbb{E}[\Delta_{i,t}(\hat{x}_t)]   \nonumber\\
		&\quad \textstyle-  \frac{1}{N}\sum_{i = 1}^N \sum_{t = 1}^T \mathbb{E}\left[\frac{\eta\|x_{i,t} - z_{i,t + 1}\|^2}{4\alpha_t}\right]  \nonumber\\
		&\quad \textstyle+ \sum_{t = 1}^T \frac{1}{\alpha_t} \left( d(\mathcal{X}) L_{\psi} \sqrt{C} s_{t + 1} + L_{\psi}  C s_{t + 1}^2 \right),  \\
		&\textstyle \sum_{i = 1}^N \sum_{t = 1}^T \frac{1}{2} \mathbb{E}\left[ \frac{\varphi_{i,t + 1}^T h_{i,t}(x_{i,t})}{\gamma_t} + \frac{\eta \|x_{i,t} - z_{i,t + 1}\|^2}{2\gamma_0} \right] \nonumber\\
		&\textstyle \le \sum_{i = 1}^N \sum_{t = 1}^T \mathbb{E}\left[ \frac{\varphi_{i,t + 1}^T h_{i,t}(\hat{x}_t)}{\gamma_t} \right] + \tilde{\Lambda}_T(\hat{x}_{[T+1]}),  \label{Lemma5-eq2}
	\end{align}
	where $G_s = C_{f,x} + C_{g,xy} r_v$, $\Delta_{i,t}(\hat{x}_t)=( \mathcal{D}_{\psi}(\hat{x}_t, x_{i,t}) -\mathcal{D}_{\psi}(\hat{x}_{t}, x_{i,t+1}))/{\alpha_t}$  and 
	\begin{align}\label{Lambda_def}
		&\textstyle \tilde{\Lambda}_T(\hat{x}_{[T+1]}) = \frac{1}{\gamma_0}\sum_{i = 1}^N \mathbb{E}[{\mathcal{D}_\psi(\hat{x}_1, x_{i,1})}] \nonumber\\
		& \textstyle+ \frac{1}{\gamma_0}\sum_{i=1}^N\sum_{t=1}^{T} \mathbb{E}\left[ \left| \mathcal{D}_\psi(\hat{x}_{t+1}, x_{i,t+1}) - \mathcal{D}_\psi(\hat{x}_t, x_{i,t+1}) \right| \right] \nonumber\\
		& \textstyle+  2NG_s d(\mathcal{X})\sum_{t = 1}^T \frac{1}{\gamma_t} + \frac{2N\gamma_0 G_s^2}{\eta}\sum_{t = 1}^T \frac{1}{\gamma_t^2} \nonumber\\
		& \textstyle+ \frac{N}{\gamma_0}\sum_{t = 1}^T \left( d(\mathcal{X})L_{\psi}\sqrt{C}s_{t} + L_{\psi}  C s_{t}^2 \right).
	\end{align}
\end{lemma}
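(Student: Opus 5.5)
The plan is to analyze a single round of the mirror-descent step (\ref{Algorithm1-eq6}) for a fixed agent $i$ and comparator $\hat x_t$, and then sum over $i$ and $t$. The first-order optimality condition of (\ref{Algorithm1-eq6}) gives $\langle \alpha_t\omega_{i,t+1}+\nabla\psi(z_{i,t+1})-\nabla\psi(x_{i,t}),\,\hat x_t-z_{i,t+1}\rangle\ge 0$. Applying the three-point identity (\ref{three}) to this condition yields
\[
\alpha_t\langle\omega_{i,t+1},z_{i,t+1}-\hat x_t\rangle\le \mathcal{D}_\psi(\hat x_t,x_{i,t})-\mathcal{D}_\psi(\hat x_t,z_{i,t+1})-\mathcal{D}_\psi(z_{i,t+1},x_{i,t}).
\]
I then write $\langle\omega_{i,t+1},x_{i,t}-\hat x_t\rangle=\langle\omega_{i,t+1},z_{i,t+1}-\hat x_t\rangle+\langle\omega_{i,t+1},x_{i,t}-z_{i,t+1}\rangle$. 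The second term is handled by Young's inequality against $\frac{\eta}{2}\|x_{i,t}-z_{i,t+1}\|^2\le\mathcal{D}_\psi(z_{i,t+1},x_{i,t})$, which uses (\ref{strong}). Half of this quadratic term is kept as the negative term $-\frac{\eta}{4\alpha_t}\|x_{i,t}-z_{i,t+1}\|^2$.

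Bounding the hypergradient part of $\omega_{i,t+1}$ uses the projection (\ref{eq:v_update}). Since $\|v_{i,t+1}\|\le r_v$ and Assumptions~\ref{ass:outer}(ii) and \ref{ass:inner}(iii) hold, (\ref{HuGradi}) gives $\|\tilde\nabla f_{i,t}\|\le G_s$. This produces the $\frac{2G_s^2}{\eta}\alpha_t$ term. The dual part $\varphi_{i,t+1}^T\nabla h_{i,t}(x_{i,t})$ is kept separate. By convexity of $h_{i,t}$ (Assumption~\ref{ass:constraint}) and $\varphi_{i,t+1}\ge 0$,
\[
\langle\nabla h_{i,t}(x_{i,t})^T\varphi_{i,t+1},\,x_{i,t}-\hat x_t\rangle\ge\varphi_{i,t+1}^T\bigl(h_{i,t}(x_{i,t})-h_{i,t}(\hat x_t)\bigr).
\]
This is the source of (\ref{Lemma5-eq2}). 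For (\ref{Lemma5-eq1}) the dual term is not needed. There I use the feasibility $h_{i,t}(\hat x_t)\le0$ together with $\varphi_{i,t+1}^Th_{i,t}(x_{i,t})=\gamma_t\|[h_{i,t}(x_{i,t})]_+\|^2\ge0$ to drop it. Since it sits on the left with the correct sign, it only helps.

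The next step is to connect $\mathcal{D}_\psi(\hat x_t,z_{i,t+1})$ to $\mathcal{D}_\psi(\hat x_t,x_{i,t+1})$, which is the compression and consensus step. Because $x_{i,t+1}=\sum_j[W]_{ij}\hat z_{j,t+1}$, the convexity of $\mathcal{D}_\psi(\hat x,\cdot)$ in its second argument (Assumption~\ref{ass.mir}) and double stochasticity of $W$ give, after summing over $i$,
\[
\sum_i\mathcal{D}_\psi(\hat x_t,x_{i,t+1})\le\sum_j\mathcal{D}_\psi(\hat x_t,\hat z_{j,t+1}).
\]
It remains to compare $\mathcal{D}_\psi(\hat x_t,\hat z_{j,t+1})$ with $\mathcal{D}_\psi(\hat x_t,z_{j,t+1})$. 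The difference equals
\[
\psi(z)-\psi(\hat z)-\langle\nabla\psi(\hat z),\hat x-\hat z\rangle+\langle\nabla\psi(z),\hat x-z\rangle .
\]
Using the $L_\psi$-smoothness of $\psi$ and $\|\hat x-z\|\le d(\mathcal X)$, this is bounded by $L_\psi d(\mathcal{X})\|\hat z-z\|+L_\psi\|\hat z-z\|^2$. Non-expansiveness of the projection in (\ref{Algorithm1-eq1}) gives $\|\hat z_{j,t+1}-z_{j,t+1}\|\le s_{t+1}\|\mathcal C(u)-u\|$. Assumption~\ref{ass.cpmpress} and Jensen then give expected bounds $s_{t+1}\sqrt C$ and $s_{t+1}^2C$, which produce the term $\frac{1}{\alpha_t}(d(\mathcal X)L_\psi\sqrt C s_{t+1}+L_\psi Cs_{t+1}^2)$. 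Dividing by $\alpha_t$, averaging over $i$, and taking expectations yields (\ref{Lemma5-eq1}) with the telescoping-type quantity $\Delta_{i,t}(\hat x_t)$.

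For (\ref{Lemma5-eq2}) I start from the same per-round inequality, now keeping the dual term. Multiplying by $1/(\alpha_t\gamma_t)\cdot\alpha_t=1/\gamma_t$ and using $\gamma_t\alpha_t=\gamma_0$ gives
\[
\frac{\varphi_{i,t+1}^T(h_{i,t}(x_{i,t})-h_{i,t}(\hat x_t))}{\gamma_t}\le\frac{1}{\gamma_0}\bigl[\mathcal{D}_\psi(\hat x_t,x_{i,t})-\mathcal{D}_\psi(\hat x_t,z_{i,t+1})-\mathcal{D}_\psi(z_{i,t+1},x_{i,t})\bigr]+\frac{1}{\gamma_t}\langle\omega,\cdot\rangle\ \text{residuals}.
\]
The hypergradient cross term $\langle\tilde\nabla f,\hat x_t-x_{i,t}\rangle/\gamma_t$ is bounded by $2G_sd(\mathcal X)/\gamma_t$, with the factor $2$ giving slack. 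The main obstacle is the cross term $\langle\nabla h_{i,t}^T\varphi_{i,t+1},x_{i,t}-z_{i,t+1}\rangle/\gamma_t$. Its size is $\|\varphi_{i,t+1}\|G_h\|x_{i,t}-z_{i,t+1}\|/\gamma_t=G_h\|[h_{i,t}(x_{i,t})]_+\|\,\|x_{i,t}-z_{i,t+1}\|$. I would split it by Young's inequality into
\[
\frac{1}{2}\frac{\varphi_{i,t+1}^Th_{i,t}(x_{i,t})}{\gamma_t}=\frac{1}{2}\|[h_{i,t}]_+\|^2
\]
plus $\frac{G_h^2}{2}\|x_{i,t}-z_{i,t+1}\|^2$. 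The condition $\gamma_0\le\eta/(4G_h^2)$ ensures $\frac{G_h^2}{2}\le\frac{\eta}{8\gamma_0}$. This part is absorbed by $\frac{\eta}{2\gamma_0}\|x_{i,t}-z_{i,t+1}\|^2$, leaving the factor $\frac12$ on the left-hand side of (\ref{Lemma5-eq2}) and the $\frac{\eta}{4\gamma_0}$-type quadratic term. The $\tilde\nabla f$ part of the $x-z$ cross term gives $2N\gamma_0G_s^2/(\eta\gamma_t^2)$ by a second Young step.

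The Bregman terms are handled by summing over $t$ after the same compression/consensus replacement, which contributes $\frac{N}{\gamma_0}(d(\mathcal X)L_\psi\sqrt Cs_t+L_\psi Cs_t^2)$. With $1/\gamma_0$ no longer depending on $t$, the sum $\sum_t[\mathcal{D}_\psi(\hat x_t,x_{i,t})-\mathcal{D}_\psi(\hat x_t,x_{i,t+1})]$ telescopes. It leaves $\mathcal{D}_\psi(\hat x_1,x_{i,1})$ and the drift terms $|\mathcal{D}_\psi(\hat x_{t+1},x_{i,t+1})-\mathcal{D}_\psi(\hat x_t,x_{i,t+1})|$, while the final negative term is dropped. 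This assembles $\tilde\Lambda_T$.
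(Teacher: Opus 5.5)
Your overall route is the paper's: the three-point mirror-descent inequality, convexity of $h_{i,t}$ at $x_{i,t}$, Young's inequality on the $x_{i,t}-z_{i,t+1}$ cross term, convexity of $\mathcal{D}_\psi(\hat x_t,\cdot)$ with double stochasticity for the consensus step, an $L_\psi$-bound for the compression error, and telescoping. Your argument for (\ref{Lemma5-eq2}) is correct. Splitting the hypergradient and dual cross terms separately, instead of using $(G_s+G_h\|\varphi_{i,t+1}\|)^2\le 2G_s^2+2G_h^2\|\varphi_{i,t+1}\|^2$ as the paper does, gives the same left-hand side. Using the descent lemma instead of the three-point identity for $\mathcal{D}_\psi(\hat x_t,\hat z_{j,t+1})-\mathcal{D}_\psi(\hat x_t,z_{j,t+1})$ is also fine.

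The gap is in your derivation of (\ref{Lemma5-eq1}). The mirror-descent inequality controls $\langle\omega_{i,t+1},z_{i,t+1}-\hat x_t\rangle$ for the full direction $\omega_{i,t+1}$. Passing to $x_{i,t}-\hat x_t$, where convexity of $h_{i,t}$ applies, therefore unavoidably creates the dual cross term $\langle\nabla h_{i,t}(x_{i,t})^T\varphi_{i,t+1},x_{i,t}-z_{i,t+1}\rangle$. Its size is up to $G_h\gamma_t\|[h_{i,t}(x_{i,t})]_+\|\,\|x_{i,t}-z_{i,t+1}\|$. After Young's inequality on the full $\omega_{i,t+1}$ (which is where your $2G_s^2\alpha_t/\eta$ comes from), it appears as $\frac{2G_h^2\alpha_t}{\eta}\|\varphi_{i,t+1}\|^2=\frac{2G_h^2\gamma_0}{\eta}\gamma_t\|[h_{i,t}(x_{i,t})]_+\|^2$. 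Because the cross term is bilinear in $\|[h_{i,t}(x_{i,t})]_+\|$ and $\|x_{i,t}-z_{i,t+1}\|$, no choice of Young split avoids a leftover multiple of $\|[h_{i,t}(x_{i,t})]_+\|^2$. Nothing on the right-hand side of (\ref{Lemma5-eq1}) involves $[h_{i,t}(x_{i,t})]_+$, and this leftover grows with $\gamma_t$.

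The only quantity that can cancel it is the one you discard: $-\varphi_{i,t+1}^Th_{i,t}(x_{i,t})=-\gamma_t\|[h_{i,t}(x_{i,t})]_+\|^2$. So the claim that the dual term ``is not needed'' and ``only helps'' is wrong. You may drop only $\varphi_{i,t+1}^Th_{i,t}(\hat x_t)\le 0$. You must keep $\varphi_{i,t+1}^Th_{i,t}(x_{i,t})$ to obtain $\frac{2G_h^2\alpha_t}{\eta}\|\varphi_{i,t+1}\|^2-\varphi_{i,t+1}^Th_{i,t}(x_{i,t})=\gamma_t\bigl(\frac{2G_h^2\gamma_0}{\eta}-1\bigr)\|[h_{i,t}(x_{i,t})]_+\|^2\le 0$. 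This needs $\gamma_0\le\eta/(2G_h^2)$, which the hypothesis implies, and it is exactly the paper's step (\ref{eq:penalty_cancel}). You already use this cancellation for (\ref{Lemma5-eq2}); it is equally indispensable for (\ref{Lemma5-eq1}).
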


\ifbool{submitA}{
	\begin{proof}
		The proof is given in Appendix A of the arXiv version \cite{Mypaper} and is omitted here due to space limitation.
	\end{proof} 
}{
\begin{proof}
(i) By the convexity of $h_{i,t}$ and the Cauchy--Schwarz inequality, for any $\hat{x}_t \in \mathcal{X}$, we expand the inner product as follows
\begin{align}
	&\langle \tilde{\nabla}f_{i,t}(x_{i,t},y_{i,t+1},v_{i,t+1}), x_{i,t} - \hat{x}_t \rangle \nonumber\\
	&= \langle \tilde{\nabla}f_{i,t}(x_{i,t},y_{i,t+1},v_{i,t+1}), x_{i,t} - z_{i,t+1} \rangle \nonumber\\
	&\quad + \langle \omega_{i,t+1}, z_{i,t+1} - \hat{x}_t \rangle + \langle \nabla h_{i,t}(x_{i,t})^T \varphi_{i,t+1}, \hat{x}_t - z_{i,t+1} \rangle \nonumber\\
	&\le G_s \|x_{i,t} - z_{i,t+1}\| + \varphi_{i,t+1}^T \big(h_{i,t}(\hat{x}_t) - h_{i,t}(x_{i,t})\big) \nonumber\\
	&\quad + G_h \|\varphi_{i,t+1}\| \|x_{i,t} - z_{i,t+1}\|  + \langle \omega_{i,t+1}, z_{i,t+1} - \hat{x}_t \rangle, \label{eq:inner_product_bound}
\end{align}
where the inequality holds due to  \eqref{Algorithm1-eq4}, the definition of $\omega_{i,t+1}$ in \eqref{Algorithm1-eq5}, $\varphi_{i,t+1} \ge \mathbf{0}_{m_i}$ and the fact that
\begin{align}\label{Ms}
&\|\tilde{\nabla}f_{i,t}(x_{i,t},y_{i,t+1},v_{i,t+1})\|\nonumber\\
&\le  \|\nabla_x f_{i,t}(x_{i,t}, y_{i,t+1})\| + \|\nabla_{xy}^2 g_{i,t}(x_{i,t}, y_{i,t+1}) v_{i,t+1}\|\nonumber\\
&\le C_{f,x} + C_{g,xy} r_v= G_s.
\end{align}

Applying \cite[Lemma 3]{Xinlei2025} to \eqref{Algorithm1-eq6} and utilizing Assumptions~\ref{ass:network_fixed} and \ref{ass.mir}, the last term in \eqref{eq:inner_product_bound} is bounded as
	\begin{align}
		&\langle \omega_{i,t+1}, z_{i,t+1} - \hat{x}_t \rangle \nonumber\\
		&\le \frac{1}{\alpha_t} \big( \mathcal{D}_{\psi}(\hat{x}_t, x_{i,t}) - \mathcal{D}_{\psi}(\hat{x}_t, z_{i,t+1}) - \mathcal{D}_{\psi}(z_{i,t+1}, x_{i,t}) \big) \nonumber\\
		&\le \Delta_{i,t}(\hat{x}_t) + \frac{1}{\alpha_t} \bigg( \sum_{j=1}^N [W]_{ij} \mathcal{D}_{\psi}(\hat{x}_t, \hat{z}_{j,t+1}) \nonumber\\
		&\quad - \mathcal{D}_{\psi}(\hat{x}_t, z_{i,t+1}) - \frac{\eta}{2}\| z_{i,t+1} - x_{i,t} \|^2 \bigg), \label{eq:omega_bound}
	\end{align}
where the last inequality is due to \eqref{strong} and \eqref{Algorithm1-eq3}.
	
By (\ref{Algorithm1-eq1}), we obtain $\hat{z}_{i,t+1} \in \mathcal{X}$. Since $z_{i,t+1}$, $\hat{x} \in \mathcal{X}$, applying (\ref{strong}), (\ref{three}) and (\ref{KKbound}), we have
\begin{align}
	&\mathcal{D}_\psi(\hat{x}, \hat{z}_{i,t+1}) - \mathcal{D}_\psi(\hat{x}, z_{i,t+1}) + \mathcal{D}_\psi(\hat{z}_{i,t+1}, z_{i,t+1}) \nonumber\\
	&= \langle \hat{x} - \hat{z}_{i,t+1}, \nabla\psi(z_{i,t+1}) - \nabla\psi(\hat{z}_{i,t+1}) \rangle \nonumber\\
	&\le d(\mathcal{X}) L_{\psi} \|z_{i,t+1} - \hat{z}_{i,t+1}\| + L_{\psi} \|z_{i,t+1} - \hat{z}_{i,t+1}\|^2. \label{eq:bregman_diff_bound}
\end{align}

Define $\tilde{z}_{i,t+1} = \hat{z}_{i,t} + s_{t+1} \mathcal{C} \left(  ({z_{i,t+1} - \hat{z}_{i,t}})/{s_{t+1}} \right)$. Since $z_{i,t+1} \in \mathcal{X}$, we have $\mathcal{P}_{\mathcal{X}}(z_{i,t+1}) = z_{i,t+1}$. By the non-expansiveness of the projection and Assumption~\ref{ass.cpmpress}, we have
\begin{align}\label{eq:compression_bound}
	&\mathbb{E} \big[ \|z_{i,t + 1} - \hat{z}_{i,t + 1}\|^2 \big] \nonumber\\
	&= \mathbb{E}\left[ \| \mathcal{P}_{\mathcal{X}}(z_{i,t+1}) - \mathcal{P}_{\mathcal{X}}(\tilde{z}_{i,t+1}) \|^2 \right]\nonumber\\ &\le \mathbb{E}\left[ \| z_{i,t+1} - \tilde{z}_{i,t+1} \|^2 \right]\nonumber\\
	&\le s_{t + 1}^2 \mathbb{E} \bigg[ \bigg\| \frac{z_{i,t + 1} - \hat{z}_{i,t}}{s_{t + 1}} - \mathcal{C}\bigg( \frac{z_{i,t + 1} - \hat{z}_{i,t}}{s_{t + 1}} \bigg) \bigg\|^2 \bigg] \nonumber\\
	&\le  C s_{t + 1}^2.
\end{align}

	Applying Young's inequality, we have
	\begin{align}
		&\textstyle \big( G_s + G_h\|\varphi_{i,t + 1}\| \big) \|x_{i,t} - z_{i,t + 1}\| \nonumber\\
		&\textstyle \le \frac{2G_s^2\alpha_t}{\eta} + \frac{2G_h^2\alpha_t}{\eta}\|\varphi_{i,t + 1}\|^2 + \frac{\eta \|x_{i,t} - z_{i,t + 1}\|^2}{4\alpha_t}. \label{eq:youngs_ineq}
	\end{align}

	From \eqref{Algorithm1-eq4}, we have $\|\varphi_{i,t+1}\| = \gamma_t \|[h_{i,t}(x_{i,t})]_+\|$. Substituting $\gamma_t = \gamma_0 / \alpha_t$ and using the identity $[h_{i,t}(x_{i,t})]_{+}^T h_{i,t}(x_{i,t}) = \Vert{}[h_{i,t}(x_{i,t})]_{+}\Vert{}^2$, we have
	\begin{align}
		&\textstyle \frac{2G_h^2\alpha_t}{\eta}\|\varphi_{i,t+1}\|^2 - \varphi_{i,t+1}^T h_{i,t}(x_{i,t}) \nonumber\\
		&= \textstyle \left( \frac{2G_h^2\alpha_t \gamma_t^2}{\eta} - \gamma_t \right) \|[h_{i,t}(x_{i,t})]_+\|^2 \nonumber\\
		&= \textstyle \gamma_t \left( \frac{2G_h^2 \gamma_0}{\eta} - 1 \right) \|[h_{i,t}(x_{i,t})]_+\|^2 \le 0, \label{eq:penalty_cancel}
	\end{align}
	where the inequality holds by setting $\gamma_0 \le \eta / (2G_h^2)$. 
	
	Combining \eqref{eq:inner_product_bound}--\eqref{eq:penalty_cancel} and leveraging $h_{i,t}(\hat{x}_t) \le \mathbf{0}_{m_i}$, we sum the  inequalities over $i \in [N]$. Subsequently, taking the expectation, using the doubly stochastic property $\sum_{i=1}^N [W]_{ij} = 1$, and  summing over $t \in [T]$, we arrive at \eqref{Lemma5-eq1}.

	(ii)  Dividing both sides by $\gamma_t$ of (\ref{eq:inner_product_bound}), we have
	\begin{align}\label{Deee}
		&\textstyle \frac{\varphi_{i,t+1}^T h_{i,t}(x_{i,t})}{\gamma_t} \leq  \frac{\langle \tilde{\nabla} f_{i,t}(x_{i,t},y_{i,t+1},v_{i,t+1}), \hat{x}_{t}-x_{i,t} \rangle}{\gamma_t} \nonumber\\
		&\qquad\textstyle + \frac{1}{\gamma_t} (G_s + G_h \|\varphi_{i,t+1}\|) \|x_{i,t} - z_{i,t+1}\| \nonumber\\
		&\qquad\textstyle + \frac{\varphi_{i,t+1}^T h_{i,t}(\hat{x}_t)}{\gamma_t} + \frac{\langle \omega_{i,t+1}, z_{i,t+1} - \hat{x}_t \rangle}{\gamma_t}. 
	\end{align}
	
	By (\ref{Ms}) and the definition of \(d(\mathcal X)\), we have
\begin{align}
	\langle \tilde{\nabla}f_{i,t}(x_{i,t},y_{i,t+1},v_{i,t+1}), \hat{x} - x_{i,t} \rangle \le 2 d(\mathcal{X}) G_s.
\end{align}
	
 Substituting $\alpha_t \gamma_t = \gamma_0$, it follows from (\ref{eq:youngs_ineq}) that
	\begin{align}
		&\textstyle \frac{1}{\gamma_t}\big( G_s + G_h\|\varphi_{i,t + 1}\| \big) \|x_{i,t} - z_{i,t + 1}\| \nonumber\\
		&\le \textstyle \frac{2G_s^2\gamma_0}{\eta\gamma_t^2} + \frac{2G_h^2\gamma_0}{\eta\gamma_t^2}\|\varphi_{i,t + 1}\|^2 + \frac{\eta \|x_{i,t} - z_{i,t + 1}\|^2}{4\gamma_0}.
	\end{align}
	
	Recalling the definition of $\Delta_{i,t}(\hat{x}_t)$ and $\alpha_t \gamma_t = \gamma_0$, we obtain
	\begin{align}
		&\textstyle\sum_{t = 1}^T \frac{\Delta_{i,t}(\hat{x}_t)}{\gamma_t} \le \frac{\mathcal{D}_\psi(\hat{x}_1, x_{i,1})}{\gamma_0} \nonumber\\
		&\quad \textstyle + \sum_{t=1}^{T} \frac{1}{\gamma_0} \left| \mathcal{D}_\psi(\hat{x}_{t+1}, x_{i,t+1}) - \mathcal{D}_\psi(\hat{x}_t, x_{i,t+1}) \right| .
	\end{align}
	
	Similar to (\ref{eq:penalty_cancel}) and noting $\gamma_0 \le {\eta}/{4G_h^2}$, we have
	\begin{align}\label{XXX70}
		& \textstyle \frac{2G_h^2\gamma_0}{\eta\gamma_t^2}\|\varphi_{i,t + 1}\|^2 - \frac{\varphi_{i,t + 1}^T h_{i,t}(x_{i,t})}{2\gamma_t} \nonumber\\
		&= \textstyle \bigg( \frac{2G_h^2\gamma_0}{\eta} - \frac{1}{2} \bigg) \|[h_{i,t}(x_{i,t})]_+\|^2 \le 0.
	\end{align}
As a result, combining (\ref{Deee})--(\ref{XXX70}),  we arrive at the final bound~\eqref{Lemma5-eq2}.
\end{proof}
}

\begin{lemma}\label{lem.33}
Under Assumptions~\ref{ass:outer} and~\ref{ass:inner}, consider the inner iteration~(\ref{estY}) in Algorithm~\ref{algo1} with $\beta \leq {2}/({\mu_g + L_{g,y}})$, we have
	\begin{flalign}\label{lemmYY}
		&\textstyle\left\|{y}_{i,t+1} - {y}_{i,t}^*({x}_{i,t})\right\|   \leq (1 - \beta\rho)\left\|{y}_{i,t} - {y}_{i,t-1}^*({x}_{i,t-1})\right\| \nonumber\\
		&\quad \textstyle + (1 - \beta\rho)\left\|{y}_{i,t-1}^*({x}_{i,t-1}) - {y}_{i,t}^*({x}_{i,t-1})\right\| \nonumber\\
		&\quad \textstyle + L_y (1 - \beta\rho)\left\|{x}_{i,t} - {x}_{i,t-1}\right\|,
	\end{flalign}
	where  $\rho = {\mu_g L_{g,y}}/({\mu_g + L_{g,y}})$.
\end{lemma}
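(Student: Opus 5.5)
The argument is a standard one-step contraction of gradient descent on a strongly convex, smooth function, followed by a triangle inequality that splits the previous error into three pieces.

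\textbf{Step 1: contraction at a fixed outer point.} Fix $i$ and $t$. By \eqref{estY}, $y_{i,t+1}$ is one gradient step on $g_{i,t}(x_{i,t},\cdot)$ started from $y_{i,t}$. By Assumption~\ref{ass:inner}, this function is $\mu_g$-strongly convex with $L_{g,y}$-Lipschitz gradient, and its minimizer is $y_{i,t}^*(x_{i,t})$, where $\nabla_y g_{i,t}(x_{i,t},y_{i,t}^*(x_{i,t}))=0$. Write $e=y_{i,t}-y_{i,t}^*(x_{i,t})$ and $d=\nabla_y g_{i,t}(x_{i,t},y_{i,t})-\nabla_y g_{i,t}(x_{i,t},y_{i,t}^*(x_{i,t}))$. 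Then
\begin{equation*}
\|y_{i,t+1}-y_{i,t}^*(x_{i,t})\|^2=\|e\|^2-2\beta\langle d,e\rangle+\beta^2\|d\|^2 .
\end{equation*}
The co-coercivity inequality for strongly convex smooth functions gives
\begin{equation*}
\langle d,e\rangle\ge \rho\|e\|^2+\tfrac{1}{\mu_g+L_{g,y}}\|d\|^2,\qquad \rho=\tfrac{\mu_gL_{g,y}}{\mu_g+L_{g,y}} .
\end{equation*}
Since $\beta\le 2/(\mu_g+L_{g,y})$, the $\|d\|^2$ terms have nonpositive total coefficient and can be dropped. This leaves
\begin{equation*}
\|y_{i,t+1}-y_{i,t}^*(x_{i,t})\|^2\le(1-2\beta\rho)\|e\|^2\le(1-\beta\rho)^2\|e\|^2 .
\end{equation*}
Taking square roots yields $\|y_{i,t+1}-y_{i,t}^*(x_{i,t})\|\le(1-\beta\rho)\|y_{i,t}-y_{i,t}^*(x_{i,t})\|$. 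We also need $1-2\beta\rho\ge0$. This holds because $2\beta\rho\le 4\mu_gL_{g,y}/(\mu_g+L_{g,y})^2\le1$ by AM--GM.

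\textbf{Step 2: triangle-inequality splitting.} Insert the intermediate points $y_{i,t-1}^*(x_{i,t-1})$ and $y_{i,t}^*(x_{i,t-1})$:
\begin{align*}
\|y_{i,t}-y_{i,t}^*(x_{i,t})\|&\le\|y_{i,t}-y_{i,t-1}^*(x_{i,t-1})\|+\|y_{i,t-1}^*(x_{i,t-1})-y_{i,t}^*(x_{i,t-1})\|\\
&\quad+\|y_{i,t}^*(x_{i,t-1})-y_{i,t}^*(x_{i,t})\| .
\end{align*}
The first term is the previous-round tracking error. The second is the temporal drift of the inner solution at the fixed point $x_{i,t-1}$, which later sums into $H_{1,T}$. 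The third is controlled by \eqref{eq:lip_y_star} in Lemma~\ref{lem.11}, giving $L_y\|x_{i,t}-x_{i,t-1}\|$; this applies because both $x_{i,t}$ and $x_{i,t-1}$ lie in $\mathcal X$, as they are convex combinations of points $\hat z_{j,t}\in\mathcal X$. Multiplying the split by $(1-\beta\rho)$ and combining with Step~1 gives exactly \eqref{lemmYY}.

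\textbf{Main obstacle and edge case.} The only nontrivial point is the co-coercivity step in Step~1 and checking that the stepsize condition removes the gradient-norm term. Everything after that is bookkeeping. The lemma is used for $t\ge2$. For $t=1$, the index $t-1$ refers to the initialization, and the bound can be taken as a convention or handled by a constant.
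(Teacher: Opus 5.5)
Your proposal is correct and matches the paper's proof step for step. It uses the same expansion of the one-step update, the same co-coercivity inequality with $\rho=\mu_gL_{g,y}/(\mu_g+L_{g,y})$, drops the gradient-norm term because $\beta\le 2/(\mu_g+L_{g,y})$, uses $1-2\beta\rho\le(1-\beta\rho)^2$, and makes the identical three-term triangle split with \eqref{eq:lip_y_star} for the last piece. Your AM--GM check that $1-2\beta\rho\ge0$, which guarantees $1-\beta\rho\ge0$ when taking square roots, and your remark on the $t=1$ edge case are small points the paper leaves implicit.
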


\ifbool{submitA}{
	\begin{proof}
		The proof is given in Appendix A of the arXiv version \cite{Mypaper} and is omitted here due to space limitation.
	\end{proof} 
}{
\begin{proof}
	From (\ref{estY}), we have
	\begin{flalign}
		&\textstyle\left\|{y}_{i,t+1} - {y}_{i,t}^*({x}_{i,t})\right\|^2 \nonumber\\
		&\textstyle = \left\|{y}_{i,t} - {y}_{i,t}^*({x}_{i,t})\right\|^2 + \beta^2\left\|\nabla_{y} g_{i,t}({x}_{i,t}, {y}_{i,t})\right\|^2 \nonumber\\
		& \textstyle \quad- 2\beta\left\langle \nabla_{y} g_{i,t}({x}_{i,t}, {y}_{i,t}), {y}_{i,t} - {y}_{i,t}^*({x}_{i,t}) \right\rangle. \label{A.7_1_norm}
	\end{flalign}
	By  Assumption~\ref{ass:inner}, we have
	\begin{flalign}
		&\textstyle \left\langle \nabla_{y} g_{i,t}({x}_{i,t}, {y}_{i,t}), {y}_{i,t} - {y}_{i,t}^*({x}_{i,t}) \right\rangle \nonumber\\
		&\textstyle \geq \rho \left\|{y}_{i,t} - {y}_{i,t}^*({x}_{i,t})\right\|^2 + \frac{1}{\mu_g + L_{g,y}}\left\|\nabla_{y} g_{i,t}({x}_{i,t}, {y}_{i,t})\right\|^2. \label{A.7_2_norm}
	\end{flalign}
	Substitute \eqref{A.7_2_norm} into \eqref{A.7_1_norm} yields
	\begin{flalign}
		&\textstyle \left\|{y}_{i,t+1} - {y}_{i,t}^*({x}_{i,t})\right\|^2 \leq (1 - 2\beta\rho)\left\|{y}_{i,t} - {y}_{i,t}^*({x}_{i,t})\right\|^2 \nonumber\\
		&\textstyle \quad - \left(\frac{2\beta}{\mu_g + L_{g,y}} - \beta^2\right) \left\|\nabla_{y} g_{i,t}({x}_{i,t}, {y}_{i,t})\right\|^2. \label{A.7_3_norm}
	\end{flalign}
	Notice from (\ref{estY}) that $\left\|{y}_{i,t+1} - {y}_{i,t}\right\|^2 = \beta^2\left\|\nabla_{y} g_{i,t}({x}_{i,t}, {y}_{i,t})\right\|^2$.  From \eqref{A.7_3_norm}, since ${2}/{(\beta(\mu_g + L_{g,y}))} - 1 \geq 0$,  we obtain
	\begin{flalign}
		&\textstyle \left\|{y}_{i,t+1} - {y}_{i,t}^*({x}_{i,t})\right\|^2  \leq (1 - 2\beta\rho)\left\|{y}_{i,t} - {y}_{i,t}^*({x}_{i,t})\right\|^2\nonumber\\
		&\textstyle \leq (1-\beta\rho)^2 \| {y}_{i,t} - {y}_{i,t}^*({x}_{i,t})\|^2. \label{A.7_4_norm}
	\end{flalign}
	From \eqref{A.7_4_norm}, it yields
	 \begin{flalign}
	 	\left\|{y}_{i,t+1} - {y}_{i,t}^*({x}_{i,t})\right\| 
	 	\leq (1 - \beta\rho)\left\|{y}_{i,t} - {y}_{i,t}^*({x}_{i,t})\right\|. \label{A.7_5_combined}
	 \end{flalign}
	Moreover, we have
	\begin{flalign}
		& \textstyle\left\|{y}_{i,t} - {y}_{i,t}^*({x}_{i,t})\right\| \nonumber\\
		& \textstyle \leq \left\|{y}_{i,t} - {y}_{i,t-1}^*({x}_{i,t-1})\right\| + \left\|{y}_{i,t-1}^*({x}_{i,t-1}) - {y}_{i,t}^*({x}_{i,t-1})\right\| \nonumber\\
		& \quad + \left\|{y}_{i,t}^*({x}_{i,t-1}) - {y}_{i,t}^*({x}_{i,t})\right\|. \label{A.7_5_norm}
	\end{flalign}
	From (\ref{eq:lip_y_star}), we have
	\begin{flalign}
		\textstyle \left\|{y}_{i,t}^*({x}_{i,t-1}) - {y}_{i,t}^*({x}_{i,t})\right\| \leq L_y \left\|{x}_{i,t} - {x}_{i,t-1}\right\|. \label{A.7_6_norm}
	\end{flalign}
	
	Substitute \eqref{A.7_6_norm} and \eqref{A.7_5_norm}  into \eqref{A.7_5_combined}, we obtain (\ref{lemmYY}).
\end{proof}
}

\begin{lemma}\label{lem:v_bound_dist_single_norm}
	Under Assumptions~\ref{ass:outer} and~\ref{ass:inner}, consider~\eqref{eq:w_update}~and \eqref{eq:v_update} in Algorithm~\ref{algo1} with $\lambda \leq 1/L_{g,y}$, we have
	\begin{align}
		&\|{v}_{i,t+1} - u_{i,t}^*\| \leq \rho_v\|{v}_{i,t} - u_{i,t-1}^*\| \nonumber \\
		&\quad + \tilde{C}_{xy} \big(\|{x}_{i,t-1} - {x}_{i,t}\| + \|{y}_{i,t} - {y}_{i,t+1}\|\big) \nonumber \\
		&\quad + \tilde{C}_g \|\nabla_{{y}{y}}^2 g_{i,t-1}({x}_{i,t}, {y}_{i,t+1}) - \nabla_{{y}{y}}^2 g_{i,t}({x}_{i,t}, {y}_{i,t+1})\| \nonumber \\
		&\quad + \tilde{C}_f \|\nabla_{y} f_{i,t-1}({x}_{i,t}, {y}_{i,t+1}) - \nabla_{y} f_{i,t}({x}_{i,t}, {y}_{i,t+1})\|, \label{eq:lem_v_bound_final}
	\end{align}
	where $u_{i,t}^* =v_{i,t}(x_{i,t},y_{i,t+1})= [\nabla_{yy}^2 g_{i,t}(x_{i,t}, y_{i,t+1})]^{-1} \nabla_y f_{i,t}(x_{i,t}, y_{i,t+1}) $, $\rho_v = 1-{\lambda\mu_g}/{2} \in (0, 1)$, $\tilde{C}_f = {\rho_v}/{\mu_g}$,  $\tilde{C}_g ={\rho_v C_{f,y}}/{\mu_g^2}$,  $\tilde{C}_{xy} = \rho_v L_v$.  
\end{lemma}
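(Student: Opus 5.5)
The plan is to split the error through the exact target at the current step, use the ball projection together with a one-step contraction of the linear map, and then move the target from round $t$ back to round $t-1$ using Lipschitz estimates.

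\textbf{Step 1: projection.} Since $\|u_{i,t}^*\|\le C_{f,y}/\mu_g\le r_v$ by Lemma~\ref{lemVM}, $u_{i,t}^*$ lies in the ball of radius $r_v$. The map \eqref{eq:v_update} is the Euclidean projection onto that ball, which is nonexpansive, so
\[
\|v_{i,t+1}-u_{i,t}^*\|\le\|w_{i,t+1}-u_{i,t}^*\|.
\]

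\textbf{Step 2: contraction.} Write $A=\nabla_{yy}^2g_{i,t}(x_{i,t},y_{i,t+1})$ and $b=\nabla_yf_{i,t}(x_{i,t},y_{i,t+1})$, so that $u_{i,t}^*=A^{-1}b$ and $b=Au_{i,t}^*$. Then \eqref{eq:w_update} gives
\[
w_{i,t+1}-u_{i,t}^*=(I-\lambda A)(v_{i,t}-u_{i,t}^*).
\]
By Assumption~\ref{ass:inner}, $\mu_gI\preceq A\preceq L_{g,y}I$, and $\lambda\le 1/L_{g,y}$ makes $I-\lambda A$ positive semidefinite. Hence $\|I-\lambda A\|\le 1-\lambda\mu_g\le\rho_v$, and
\[
\|v_{i,t+1}-u_{i,t}^*\|\le\rho_v\|v_{i,t}-u_{i,t}^*\|.
\]
The choice $\rho_v=1-\lambda\mu_g/2$ is looser than needed but harmless.

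\textbf{Step 3: triangle inequality.} Write
\[
\|v_{i,t}-u_{i,t}^*\|\le\|v_{i,t}-u_{i,t-1}^*\|+\|u_{i,t-1}^*-u_{i,t}^*\|,
\]
where $u_{i,t-1}^*=v_{i,t-1}(x_{i,t-1},y_{i,t})$. Insert the intermediate point $v_{i,t-1}(x_{i,t},y_{i,t+1})$ and split the drift into two pieces.
\begin{itemize}
\item The first piece, $\|v_{i,t-1}(x_{i,t-1},y_{i,t})-v_{i,t-1}(x_{i,t},y_{i,t+1})\|$, is bounded by (\ref{eq:lip_v}) from Lemma~\ref{lem.11} (applied with the index $t-1$) by $L_v(\|x_{i,t-1}-x_{i,t}\|+\|y_{i,t}-y_{i,t+1}\|)$. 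Multiplied by $\rho_v$, this gives the $\tilde C_{xy}$ term.
\item The second piece is the time drift at a fixed point. Let $A'=\nabla_{yy}^2g_{i,t-1}(x_{i,t},y_{i,t+1})$ and $b'=\nabla_yf_{i,t-1}(x_{i,t},y_{i,t+1})$. Then
\[
A'^{-1}b'-A^{-1}b=A'^{-1}(b'-b)+A'^{-1}(A-A')A^{-1}b.
\]
Using $\|A'^{-1}\|\le1/\mu_g$ and $\|A^{-1}b\|\le C_{f,y}/\mu_g$, the norm is at most
\[
\tfrac{1}{\mu_g}\|b'-b\|+\tfrac{C_{f,y}}{\mu_g^2}\|A'-A\|.
\]
Multiplied by $\rho_v$, this yields the constants $\tilde C_f$ and $\tilde C_g$.
\end{itemize}

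\textbf{Step 4: combine.} Chaining Steps 1 through 3 gives \eqref{eq:lem_v_bound_final}.

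\textbf{Main obstacle.} The only delicate point is Step 3: the Lipschitz bound (\ref{eq:lip_v}) must be applied at a fixed time index ($t-1$), with the function change isolated at a single common evaluation point. The bookkeeping is needed so that the Hessian and gradient differences appear exactly at $(x_{i,t},y_{i,t+1})$. The contraction factor also requires care, since it relies on $\lambda\le1/L_{g,y}$ keeping $I-\lambda A$ positive semidefinite.
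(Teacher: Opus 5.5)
Your proposal is correct and matches the paper's proof step for step. Both use the nonexpansive ball projection, then the one-step contraction of the HvP iteration, then the triangle inequality through $u_{i,t-1}^*$, and then the same intermediate point $v_{i,t-1}(x_{i,t},y_{i,t+1})$ to split the drift into an $L_v$-Lipschitz spatial part and a fixed-point temporal part, the latter bounded by $\tfrac{1}{\mu_g}\|b'-b\|+\tfrac{C_{f,y}}{\mu_g^2}\|A'-A\|$. The differences are cosmetic: you bound $\|I-\lambda A\|\le 1-\lambda\mu_g$ directly where the paper contracts the squared norm and uses $\sqrt{1-\lambda\mu_g}\le\rho_v$, and your explicit check that $u_{i,t}^*$ lies in the $r_v$-ball is left implicit in the paper.
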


\ifbool{submitA}{
	\begin{proof}
		The proof is given in Appendix A of the arXiv version \cite{Mypaper} and is omitted here due to space limitation.
	\end{proof} 
}{
\begin{proof}
Based on the updates \eqref{eq:w_update} and \eqref{eq:v_update}, Assumptions~\ref{ass:outer} and~\ref{ass:inner}, and the non-expansiveness of the projection, we have
\begin{align}
	\|v_{i,t+1} - u_{i,t}^*\|^2 &\leq \|w_{i,t+1} - u_{i,t}^*\|^2 \nonumber \\
	&\leq (1 - \lambda\mu_g) \|v_{i,t} - u_{i,t}^*\|^2. \label{eq:contraction_sq}
\end{align}

Furthermore, by the triangle inequality and $\rho_v \ge \sqrt{1-\lambda\mu_g}$, it follows from~(\ref{eq:contraction_sq}) that
	\begin{align}
		&\|v_{i,t+1} - u_{i,t}^*\| \leq \|w_{i,t+1} - u_{i,t}^*\| \leq \rho_v \|v_{i,t} - u_{i,t}^*\| \nonumber \\
		&\quad\leq \rho_v \|v_{i,t} - u_{i,t-1}^*\| + \rho_v \|u_{i,t-1}^* - u_{i,t}^*\|. \label{eq:v_bound_triangle}
	\end{align}
 We decompose the optimal shift $\|u_{i,t-1}^* - u_{i,t}^*\|$ into spatial ($S_1$) and temporal ($S_2$) variations
	\begin{align}
		&\|u_{i,t-1}^* - u_{i,t}^*\| \leq \underbrace{\|v_{i,t-1}({x}_{i,t-1}, {y}_{i,t}) - v_{i,t-1}({x}_{i,t}, {y}_{i,t+1})\|}_{S_1} \nonumber\\
		&\qquad + \underbrace{\|v_{i,t-1}({x}_{i,t}, {y}_{i,t+1}) - v_{i,t}({x}_{i,t}, {y}_{i,t+1})\|}_{S_2}. \label{eq:shift_split}
	\end{align}
	For $S_1$, by directly leveraging the $L_v$-Lipschitz continuity of $v_{i,t-1}(x, y)$ established in (\ref{eq:lip_v}), we obtain
	\begin{align}
		S_1 \leq L_v \left(\|{x}_{i,t-1} - {x}_{i,t}\| + \|{y}_{i,t} - {y}_{i,t+1}\|\right). \label{eq:S1_bound}
	\end{align}
	For $S_2$, simple calculations yield
\begin{equation}
	\begin{aligned}
		\textstyle S_2 & \textstyle= \Big\|\big[\nabla_{yy}^2 g_{i,t-1}({x}_{i,t}, {y}_{i,t+1})\big]^{-1}\nabla_{y} f_{i,t-1}({x}_{i,t}, {y}_{i,t+1}) \\
		&\quad  \textstyle - \big[\nabla_{yy}^2 g_{i,t}({x}_{i,t}, {y}_{i,t+1})\big]^{-1}\nabla_{y} f_{i,t}({x}_{i,t}, {y}_{i,t+1})\Big\| \\
		&\textstyle \leq \frac{C_{f,y}}{\mu_g^2}\left\|\nabla_{yy}^2 g_{i,t-1}({x}_{i,t}, {y}_{i,t+1}) - \nabla_{yy}^2 g_{i,t}({x}_{i,t}, {y}_{i,t+1})\right\| \\
		&\quad \textstyle + \frac{1}{\mu_g}\left\|\nabla_{y} f_{i,t-1}({x}_{i,t}, {y}_{i,t+1}) - \nabla_{y} f_{i,t}({x}_{i,t}, {y}_{i,t+1})\right\|. \label{eq:S2_bound}
	\end{aligned}
\end{equation}
 
 Substituting \eqref{eq:S1_bound} and \eqref{eq:S2_bound} into \eqref{eq:shift_split}, and multiplying by the scaling parameter $\rho_v$, we arrive at \eqref{eq:lem_v_bound_final}. 
\end{proof}
}

\begin{lemma}\label{lem:v_bound_dist_multi_norm}
	Under Assumptions~\ref{ass:outer} and~\ref{ass:inner}, given $\lambda < 1/L_{g,y}$ and $M$ HvP iterations as defined in Algorithm~\ref{alg.2}, define $\rho_H=1-\lambda \mu_g \in (0,1)$. Then,
	\begin{align}
	\textstyle \|v_{i,t+1} - v_{i,t}^*(x_{i,t})\| \leq& \textstyle (r_v+\frac{C_{f,y}}{\mu_g})\rho_H^{M/2}  \nonumber\\
	&\textstyle + L_v\|y_{i,t+1} - y_{i,t}^*(x_{i,t})\|. \label{eq:v_bound_dist_res_norm}
	\end{align}
\end{lemma}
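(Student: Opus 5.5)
The plan is to split the error at the intermediate point $\hat v_{i,t}=v_{i,t}(x_{i,t},y_{i,t+1})$, which is the exact minimizer of the quadratic solved by the $M$-step loop (\ref{eq:w_multi_loop})--(\ref{eq:v_proj_loop}) at round $t$. The triangle inequality gives
\begin{align*}
\|v_{i,t+1}-v_{i,t}^*(x_{i,t})\|\le \|v_{i,t+1}^{(M)}-\hat v_{i,t}\|+\|\hat v_{i,t}-v_{i,t}(x_{i,t},y_{i,t}^*(x_{i,t}))\|.
\end{align*}
The second term is handled directly by the Lipschitz bound (\ref{eq:lip_v}) of Lemma~\ref{lem.11}, with the same $x$ argument. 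It is at most $L_v\|y_{i,t+1}-y_{i,t}^*(x_{i,t})\|$, which is exactly the second term of (\ref{eq:v_bound_dist_res_norm}).

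For the first term, I will establish a per-step contraction for the projected iteration, as in the proof of Lemma~\ref{lem:v_bound_dist_single_norm}.

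\begin{itemize}
\item \emph{Fixed point lies in the ball.} By Lemma~\ref{lemVM}, $\|\hat v_{i,t}\|\le C_{f,y}/\mu_g\le r_v$. Hence $\hat v_{i,t}$ lies in the ball and is invariant under the radial projection $\mathcal{P}$ in (\ref{eq:v_proj_loop}).
\item \emph{Contraction of the gradient step.} The gradient step satisfies $w^{(m)}-\hat v_{i,t}=(\mathbf I-\lambda A)(v^{(m-1)}-\hat v_{i,t})$ with $A=\nabla_{yy}^2 g_{i,t}(x_{i,t},y_{i,t+1})$ and $\mu_g\mathbf I\preceq A\preceq L_{g,y}\mathbf I$. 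Since $\lambda<1/L_{g,y}$, we get $\|\mathbf I-\lambda A\|\le 1-\lambda\mu_g=\rho_H$, which is at most $\sqrt{\rho_H}$ because $\rho_H\in(0,1)$. This matches the squared contraction used in (\ref{eq:contraction_sq}).
\item \emph{Iterating.} Non-expansiveness of $\mathcal{P}$, together with $\mathcal{P}(\hat v_{i,t})=\hat v_{i,t}$, gives $\|v^{(m)}-\hat v_{i,t}\|\le\rho_H^{1/2}\|v^{(m-1)}-\hat v_{i,t}\|$. Iterating $M$ times yields $\|v_{i,t+1}-\hat v_{i,t}\|\le\rho_H^{M/2}\|v_{i,t}-\hat v_{i,t}\|$.
\end{itemize}

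To conclude, it suffices to bound the initial error crudely, so that no temporal variation term appears. Since $v_{i,t}$ is either the initialization $0_p$ or the output of a projection onto the ball, $\|v_{i,t}\|\le r_v$. Therefore $\|v_{i,t}-\hat v_{i,t}\|\le r_v+C_{f,y}/\mu_g$, and combining with the second term gives (\ref{eq:v_bound_dist_res_norm}).

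The only delicate point is the contraction step. It requires the projection to be the genuine Euclidean projection onto a convex set containing the fixed point, and this is where $r_v\ge C_{f,y}/\mu_g$ is used. It also requires the uniform eigenvalue bounds on the Hessian evaluated at $(x_{i,t},y_{i,t+1})$, which follow from Assumption~\ref{ass:inner}.
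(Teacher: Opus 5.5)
Your proposal is correct and follows the paper's proof essentially step for step. It uses the same split at $\hat v_{i,t}=v_{i,t}(x_{i,t},y_{i,t+1})$, bounds the first term by contraction of the projected iteration together with Lemma~\ref{lemVM} and $\|v_{i,t}\|\le r_v$ to get $(r_v+C_{f,y}/\mu_g)\rho_H^{M/2}$, and bounds the second term with the Lipschitz estimate (\ref{eq:lip_v}). Your explicit spectral argument actually gives the sharper per-step factor $\rho_H$, which you correctly weaken to $\rho_H^{1/2}$ to match the statement.
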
  

\ifbool{submitA}{
	\begin{proof}
		The proof is given in Appendix A of the arXiv version \cite{Mypaper} and is omitted here due to space limitation.
	\end{proof} 
}{
\begin{proof}
	Define  $\hat{v}_{i,t} \triangleq v_{i,t}(x_{i,t}, y_{i,t+1})$. By the triangle inequality, we have
	\begin{align}
		\textstyle \|v_{i,t+1} - v_{i,t}^*(x_{i,t})\| \leq \|v_{i,t+1} - \hat{v}_{i,t}\| + \|\hat{v}_{i,t} - v_{i,t}^*(x_{i,t})\|. \label{eq:proof_triangle}
	\end{align}
	
 For the first term in (\ref{eq:proof_triangle}), according to Assumption~\ref{ass:inner}, Lemma~\ref{lemVM} and the non-expansiveness of the projection \eqref{eq:v_proj_loop}, we have
	\begin{align}
		\textstyle  \|v_{i,t+1} - \hat{v}_{i,t}\| \leq \rho_H^{M/2}\|v_{i,t+1}^0 - \hat{v}_{i,t}\| \leq (r_v+\frac{C_{f,y}}{\mu_g})\rho_H^{M/2}. \label{eq:iter_error}
	\end{align}
For the second term in (\ref{eq:proof_triangle}), leveraging (\ref{eq:lip_v}), we have
	\begin{align}
		&\textstyle  \|\hat{v}_{i,t} - v_{i,t}^*(x_{i,t})\| = \|v_{i,t}(x_{i,t}, y_{i,t+1}) - v_{i,t}(x_{i,t}, y_{i,t}^*(x_{i,t}))\| \nonumber \\
		&\textstyle  \leq L_v\|y_{i,t+1} - y_{i,t}^*(x_{i,t})\|.\label{eq:shift_error}
	\end{align}
	Combining~\eqref{eq:proof_triangle}--\eqref{eq:shift_error}, we conclude that \eqref{eq:v_bound_dist_res_norm} holds.
\end{proof}
}

\subsection{Proof of Theorem~\ref{thm1}}\label{BBBB}
\begin{proof} 
(i) By the convexity of the outer-level function $f_t$, for any $i \in [N]$ and $t \in [T]$, we have
\begin{align*}
\textstyle f_t(x_{i,t}) - f_t(x_t^*) \le \langle \nabla f_t(x_{i,t}), x_{i,t} - x_t^* \rangle.
\end{align*}
Therefore, 
\begin{align*}
\textstyle  \mathbb{E}[\mathrm{Net}\text{-}\mathrm{Reg}(T)] \le \frac{1}{N} \sum_{i=1}^N \sum_{t=1}^T \mathbb{E}[\langle \nabla f_t(x_{i,t}), x_{i,t} - x_t^* \rangle].
\end{align*}
To bound the right-hand side, for any $\{\hat{x}_t\}_{t=1}^{T+1}$ satisfying the conditions of Lemma~\ref{lem.55}, using Lemma~\ref{lem.11} and the diameter bound \(\|x-y\|\le d(\mathcal X)\) for \(x,y\in\mathcal X\), we obtain
\begin{align}\label{KK1}
	&\textstyle\frac{1}{N}\sum_{i=1}^{N}\sum_{t=1}^{T}\mathbb{E}\left[\langle\nabla f_{t}(x_{i,t}),x_{i,t}-\hat{x}_t\rangle\right] \nonumber\\
	&\textstyle \le \frac{1}{N}\sum_{i=1}^{N}\sum_{t=1}^{T}\mathbb{E}\left[\langle\tilde{\nabla}f_{i,t}(x_{i,t},y_{i,t+1},v_{i,t+1}),x_{i,t}-\hat{x}_t\rangle\right] \nonumber\\
	& \textstyle +\underbrace{\textstyle  \frac{d(\mathcal{X})}{N}\sum_{i=1}^{N}\sum_{t=1}^{T}\mathbb{E}\left[\varUpsilon_{i,t}\right]}_{S_3}+\underbrace{ \textstyle  \frac{2G_s}{N}\sum_{i=1}^{N}\sum_{t=1}^{T}\mathbb{E}\left[\|x_{i,t}-\bar{x}_{t}\|\right]}_{S_4}\nonumber\\
	& \textstyle +\underbrace{ \textstyle \frac{d(\mathcal{X})L}{N^{2}}\sum_{i=1}^{N}\sum_{j=1}^{N}\sum_{t=1}^{T}\mathbb{E}\left[\|x_{i,t}-x_{j,t}\|\right]}_{S_5},
\end{align}
where $\bar{x}_t=1/N \sum_{j=1}^{N} x_{j,t}$.

 Let 
\begin{equation}\label{varUp}
	\textstyle  \varUpsilon _{i,t}=\|\nabla f_{i,t}(x_{i,t},y_{i,t}^*(x_{i,t})) - \tilde{\nabla}f_{i,t}(x_{i,t},y_{i,t+1},v_{i,t+1})\|.
\end{equation}
From (\ref{eq:hypergradient}), (\ref{HuGradi}) and (\ref{varUp}), we have
\begin{align}\label{KKey}
	&\textstyle	\varUpsilon_{i,t}  = \big\|\nabla_x f_{i,t}(x_{i,t},y_{i,t}^*(x_{i,t})) - \nabla_x f_{i,t}(x_{i,t},y_{i,t+1}) \nonumber\\
	& \quad + \textstyle  \nabla_{xy}^2 g_{i,t}(x_{i,t},y_{i,t+1})v_{i,t+1} \nonumber\\
	& \quad \textstyle - \nabla_{xy}^2 g_{i,t}(x_{i,t},y_{i,t}^*(x_{i,t}))v_{i,t}^*(x_{i,t})\big\| \nonumber\\
	& \textstyle \le \big\|\nabla_x f_{i,t}(x_{i,t},y_{i,t}^*(x_{i,t})) - \nabla_x f_{i,t}(x_{i,t},y_{i,t+1})\big\| \nonumber\\
	& \quad \textstyle + \big\|\nabla_{xy}^2 g_{i,t}(x_{i,t},y_{i,t+1})v_{i,t+1} \nonumber\\
	& \quad \textstyle - \nabla_{xy}^2 g_{i,t}(x_{i,t},y_{i,t}^*(x_{i,t}))v_{i,t}^*(x_{i,t})\big\|\nonumber\\
	& \textstyle \le L_{f,x}\big\|y_{i,t+1} - y_{i,t}^*(x_{i,t})\big\|  + \big\|\nabla_{xy}^2 g_{i,t}(x_{i,t},y_{i,t+1})v_{i,t+1} \nonumber\\
	& \quad \textstyle - \nabla_{xy}^2 g_{i,t}(x_{i,t},y_{i,t+1})v_{i,t}^*(x_{i,t})\big\| \nonumber\\
	& \quad \textstyle  + \big\|\nabla_{xy}^2 g_{i,t}(x_{i,t},y_{i,t+1})v_{i,t}^*(x_{i,t}) \nonumber\\
	& \quad \textstyle  - \nabla_{xy}^2 g_{i,t}(x_{i,t},y_{i,t}^*(x_{i,t}))v_{i,t}^*(x_{i,t})\big\| \nonumber\\
	& \textstyle  \le \left(L_{f,x} + \frac{C_{f,y}L_{g,xy}}{\mu_g}\right)\big\|y_{i,t+1} - y_{i,t}^*(x_{i,t})\big\| \nonumber\\
	& \quad \textstyle + C_{g,xy}\big\|v_{i,t+1} - u_{i,t}^*\big\|+ C_{g,xy}\big\|u_{i,t}^*- v_{i,t}^*(x_{i,t})\big\|\nonumber\\
	& \textstyle  \le \Phi_{i,t+1},
\end{align}
where $\Phi_{i,t+1}=\kappa_F \big\|y_{i,t+1} - y_{i,t}^*(x_{i,t})\big\|  + C_{g,xy}\big\|v_{i,t+1} - u_{i,t}^*\big\|$, with $\kappa_F=L_{f,x} + L_{g,xy}C_{f,y}/\mu_g + C_{g,xy}L_v + 2 C_{g,xy} \tilde{C}_{xy} L_{g,y}/\rho$.  Furthermore, substituting \eqref{lemmYY} and \eqref{eq:lem_v_bound_final} into \eqref{KKey}, and utilizing $\| y_{i,t+1} - y_{i,t} || \le \beta L_{g,y} \| y_{i,t} - y_{i,t}^*(x_{i,t}) \|$, we obtain
\begin{align}\label{Fkappa}
	& \textstyle  \Phi_{i,t+1} \le \kappa_F(1 - \beta\rho) \left\|y_{i,t} - y_{i,t-1}^*(x_{i,t-1})\right\| \nonumber\\
	&\qquad \qquad\textstyle  + C_{g,xy}\rho_v \left\|v_{i,t} - u_{i,t-1}^*\right\| \nonumber\\
	&\quad + \big( \kappa_F L_y(1 - \beta\rho) + C_{g,xy}\tilde{C}_{xy} \big) \left\|x_{i,t-1} - x_{i,t}\right\| \nonumber\\
	&\quad \textstyle  + \kappa_F(1 - \beta\rho) \left\|y_{i,t-1}^*(x_{i,t-1}) - y_{i,t}^*(x_{i,t-1})\right\| \nonumber\\
	&\quad \textstyle  + C_{g,xy}\tilde{C}_g \left\|\nabla_{yy}^2 g_{i,t-1}(x_{i,t}, y_{i,t+1}) - \nabla_{yy}^2 g_{i,t}(x_{i,t}, y_{i,t+1})\right\| \nonumber\\
	&\quad \textstyle  + C_{g,xy}\tilde{C}_f \left\|\nabla_y f_{i,t-1}(x_{i,t}, y_{i,t+1}) - \nabla_y f_{i,t}(x_{i,t}, y_{i,t+1})\right\| \nonumber\\
	&\quad \textstyle + C_{g,xy}\tilde{C}_{xy} \beta L_{g,y} \left\|y_{i,t} - y_{i,t}^*(x_{i,t})\right\|.
\end{align}

Utilizing the definition of $\kappa_F$ which ensures $\kappa_F (1 - \beta \rho) + C_{g,xy} \tilde{C}_{xy} \beta L_{g,y} \le \kappa_F \left(1 -  \beta \rho/2\right)$ and setting $\bar{\rho} = \max\{1 - \beta\rho/2,  \rho_v\}\in (0,1)$, for $t\ge 2$  we obtain
\begin{align}\label{AA98}
	\textstyle &\Phi_{i,t+1}
	\le \textstyle  \bar{\rho}\Phi_{i,t}+ C_x  \|x_{i,t-1} - x_{i,t}\| \nonumber\\
	& \textstyle \qquad  + C_{\alpha} \sup_{x} \|y_{i,t-1}^*(x) - y_{i,t}^*(x)\| \nonumber\\
	& \textstyle   \qquad  + C_g^H  \sup_{x,y} \|\nabla_{yy}^2 g_{i,t-1}(x, y) - \nabla_{yy}^2 g_{i,t}(x, y)\| \nonumber\\
	&\textstyle \qquad    + C_f^g \sup_{x,y} \|\nabla_y f_{i,t-1}(x, y) - \nabla_y f_{i,t}(x, y)\|, 
\end{align}
where $C_{x} = \kappa_F L_y(1 - \beta\rho) + C_{g,xy}\tilde{C}_{xy}(1+\beta L_{g,y}L_y)$, $C_{\alpha} = \kappa_F(1 - \beta\rho/2)$, $C_g^H = C_{g,xy}\tilde{C}_g$ and $C_f^g = C_{g,xy}\tilde{C}_f$.

By (\ref{KKey}) and (\ref{AA98}), we further obtain 
\begin{align}\label{eq:cumulative_tracking_error}
\textstyle \sum_{t=1}^T \varUpsilon_{i,t} \le& \textstyle \frac{\Phi_{i,2}}{1-\bar{\rho}} + \frac{C_x}{1-\bar{\rho}} \sum_{t=1}^{T-1} \|x_{i,t} - x_{i,t+1}\| + \frac{C_\alpha}{1-\bar{\rho}} H_{1,T}\nonumber\\
& \textstyle+ \frac{C_g^H}{1-\bar{\rho}} E_{1,yy,T}^g + \frac{C_f^g}{1-\bar{\rho}} E_{1,y,T}^f . 
\end{align}

From (\ref{eq:cumulative_tracking_error}), we have
\begin{equation}\label{KK2}
	\begin{aligned}
		& \textstyle S_3 \le \frac{d(\mathcal{X})}{N(1-\bar{\rho})} \sum_{i=1}^{N} \mathbb{E}\left[\Phi_{i,2}\right] + \frac{d(\mathcal{X})C_{\alpha}}{1-\bar{\rho}} H_{1,T}\\
		&\qquad  \textstyle  +  \textstyle \underbrace{ \textstyle \frac{d(\mathcal{X})C_x}{N(1-\bar{\rho})} \sum_{i=1}^{N}\sum_{t=1}^{T-1} \mathbb{E}\left[\|x_{i,t} - x_{i,t+1}\|\right] }_{S_6}\\
		&\qquad \textstyle  + \frac{d(\mathcal{X})C_g^H}{1-\bar{\rho}} E_{1,yy,T}^g + \frac{d(\mathcal{X})C_f^g}{1-\bar{\rho}} E_{1,y,T}^f.
	\end{aligned}
\end{equation}

Moreover, from (\ref{Algorithm1-eq3}), it holds that
\begin{align}\label{AA96}
	&\textstyle \|x_{i,t+1} - x_{i,t}\| \nonumber\\
	& \textstyle \le \sum_{j=1}^N [W]_{ij} \|x_{j,t} - x_{i,t}\| + \sum_{j=1}^N [W]_{ij} \|\varepsilon_{j,t}^z\|.
\end{align}
Summing (\ref{AA96}) over $i \in [N]$ yields
\begin{align}
	&\textstyle	\sum_{i=1}^N \|x_{i,t+1} - x_{i,t}\| \nonumber\\
	&\textstyle \le \sum_{i=1}^N \sum_{j=1}^N \|x_{i,t} - x_{j,t}\| + \sum_{i=1}^N \|\varepsilon_{i,t}^z\|, \label{eq:variation_consensus_bridge}
\end{align}
where the inequality holds since Assumption~\ref{ass:network_fixed}.

By utilizing $\|x_{i,t} - \bar{x}_t\| \le 1/N \sum_{j=1}^N \|x_{i,t} - x_{j,t}\|$,  (\ref{Cons11}) and (\ref{eq:variation_consensus_bridge}), we have
\begin{align}
\textstyle	S_4 + S_5 + S_6 \le \tilde{\psi}_1 + \frac{\tilde{\psi}_2}{N} \sum_{i=1}^{N}\sum_{t=1}^{T} \mathbb{E}\left[\|\varepsilon_{i,t}^z\|\right], \label{eq:S346_bound}
\end{align}
where $\tilde{\psi}_1 = ( d(\mathcal{X})L + 2G_s + d(\mathcal{X})C_x N / (1-\bar{\rho}) ) \vartheta_1$ and $\tilde{\psi}_2 = ( d(\mathcal{X})L + 2G_s )\vartheta_2 + d(\mathcal{X})C_x(N\vartheta_2 + 1) / (1-\bar{\rho})$.

By (\ref{Algorithm1-eq1}), Assumption~\ref{ass.cpmpress} and the non-expansiveness of $P_\mathcal{X}$,  we have $\mathbb{E}[\Vert{} z_{i, t+1} - \hat{z}_{i, t+1} \Vert{}^2] \le C s_{t+1}^2$.  Combining this estimate with the triangle inequality and Young's inequality gives
\begin{align}
	& \textstyle \frac{\tilde{\psi}_2}{N} \sum_{t=1}^{T} \sum_{i=1}^{N} \mathbb{E}\left[\|\varepsilon_{i,t}^z\|\right] \nonumber\\
	&\textstyle \le \frac{\tilde{\psi}_2}{N} \sum_{t=1}^{T} \sum_{i=1}^{N} \mathbb{E}\left[\|\hat{z}_{i,t+1} - z_{i,t+1}\|\right] \nonumber\\
	&\quad \textstyle + \frac{\tilde{\psi}_2}{N} \sum_{t=1}^{T} \sum_{i=1}^{N} \mathbb{E}\left[\|z_{i,t+1} - x_{i,t}\|\right] \nonumber\\
	&\le \textstyle \frac{1}{N} \sum_{t=1}^{T} \sum_{i=1}^{N} \mathbb{E}\left[\frac{\tilde{\psi}_2^2 \alpha_t}{\eta} + \frac{\eta\|z_{i,t+1} - x_{i,t}\|^2}{4\alpha_t}\right]\nonumber\\
	&\quad \textstyle +\sqrt{C}\tilde{\psi}_2 \sum_{t=1}^{T} s_{t+1} . \label{eq:epsilon_bound}
\end{align}

From (\ref{canshu}), we have
\begin{align}
\textstyle	\sum_{t = 1}^T \alpha_t & \textstyle \le \frac{\alpha_0 T^{1 - \theta_1}}{1 - \theta_1}, \quad \sum_{t = 1}^T s_t \le s_0 \left(  \frac{\theta_1+1}{\theta_1} \right), \label{eq:alpha_sum} \\
\textstyle	\sum_{t = 1}^T \frac{s_t}{\alpha_t} & \textstyle \le \frac{s_0}{\alpha_0} (1 + \ln T) , \quad \sum_{t = 1}^T \frac{s_t^2}{\alpha_t} \le \frac{s_0^2}{\alpha_0} \left( \frac{\theta_1 + 2}{\theta_1 + 1} \right). \label{eq:s_alpha_sum}
\end{align}

Besides, from (\ref{Klip})--(\ref{KKbound}), it holds that
\begin{align}\label{quer}
	\textstyle \frac{1}{N} & \textstyle \sum_{t=1}^T \sum_{i=1}^N \Delta_{i, t}\left(\hat{x}_t\right) \nonumber\\
	\leq &  \textstyle \frac{1}{N} \sum_{i=1}^N\left(\frac{1}{\alpha_1} \mathcal{D}_\psi\left(\hat{x}_1, x_{i, 1}\right)-\frac{1}{\alpha_T} \mathcal{D}_\psi\left(\hat{x}_{T+1}, x_{i, T+1}\right)\right)\nonumber \\
	&\textstyle +\left(\frac{1}{\alpha_T}-\frac{1}{\alpha_1}\right) Kd(\mathcal{X}) \nonumber\\
	&\textstyle +\frac{1}{N} \sum_{i=1}^N\sum_{t=1}^T \frac{1}{\alpha_t}\left(\mathcal{D}_\psi\left(\hat{x}_{t+1}, x_{i, t+1}\right)-\mathcal{D}_\psi\left(\hat{x}_t, x_{i, t+1}\right)\right) \nonumber\\
	\leq &\textstyle\frac{1}{N} \sum_{i=1}^N \sum_{t=1}^T \frac{1}{\alpha_t}\left(\mathcal{D}_\psi\left(\hat{x}_{t+1}, x_{i, t+1}\right)-\mathcal{D}_\psi\left(\hat{x}_t, x_{i, t+1}\right)\right) \nonumber\\
	& \textstyle +\frac{Kd(\mathcal{X}) }{\alpha_T}\nonumber\\
	\leq &\textstyle K \sum_{t=1}^T \frac{1}{\alpha_t} \|\hat{x}_{t+1}-\hat{x}_{t}\|+\frac{Kd(\mathcal{X}) }{\alpha_T}. 
\end{align}

Combining~\eqref{Lemma5-eq1} \eqref{KK1}, \eqref{KK2} and  \eqref{eq:S346_bound}--\eqref{quer}, and choosing $\hat{x}_t=x_t^*$ for $t \in [T]$ and $\hat{x}_{T+1} = x_T^*$,  we have
\begin{align}\label{eq:Net_Reg_Final}
	&\textstyle  \mathbb{E}[\mathrm{Net\text{-}Reg}(T)] \nonumber\\
	&\le \textstyle  \tilde{\psi}_3+\frac{(2G_s^2 + \tilde{\psi}_2^2)\alpha_0}{\eta(1 - \theta_1)} T^{1 - \theta_1} + \left( \frac{K d(\mathcal{X}) }{\alpha_0} + \frac{K}{\alpha_0} P_T \right) T^{\theta_1} \nonumber\\
	&\quad \textstyle  + \frac{d(\mathcal{X})L_{\psi} \sqrt{C}s_0}{\alpha_0} (1 + \ln T) + \frac{d(\mathcal{X})C_{\alpha}}{1-\bar{\rho}} H_{1,T} \nonumber\\
	&\quad \textstyle  + \frac{d(\mathcal{X})C_g^H}{1-\bar{\rho}} E_{1,yy,T}^g + \frac{d(\mathcal{X})C_f^g}{1-\bar{\rho}} E_{1,y,T}^f,
\end{align}
where $\tilde{\psi}_3 = \tilde{\psi}_1 + L_{\psi}  C (s_0^2 / \alpha_0) ((\theta_1 + 2) / (\theta_1 + 1)) + \sqrt{C}\tilde{\psi}_2 s_0 (1 + 1 / \theta_1) + (d(\mathcal{X}) / (N(1-\bar{\rho}))) \sum_{i=1}^{N}\mathbb{E} [\Phi_{i,2}]$.
Thus, \eqref{thm1ret} holds.

%$h_t(x) = \text{col}(h_{1,t}(x), \dots, h_{N,t}(x))$

(ii) 
From the non-expansiveness of the projection $[\cdot]_+$ and Assumption~\ref{ass:constraint} (ii), for any $i,j\in [N]$ we have
\begin{align}
	&\|[h_{j,t}(x_{i,t})]_+\|^2 \nonumber\\
	&\le 2\|[h_{j,t}(x_{j,t})]_+\|^2 + 2\|[h_{j,t}(x_{i,t})]_+ - [h_{j,t}(x_{j,t})]_+\|^2 \nonumber \\
	&\le 2\|[h_{j,t}(x_{j,t})]_+\|^2 + 2G_h^2 \|x_{i,t} - x_{j,t}\|^2. \label{Lemma6-proof-ii-eq1}
\end{align}

Summing \eqref{Lemma6-proof-ii-eq1} over $i\in [N]$, taking the expectation, and averaging over $i \in [N]$, we obtain
\begin{align}
	&\textstyle \frac{1}{N}\sum_{i=1}^N \sum_{t=1}^T \mathbb{E}\big[\|[h_t(x_{i,t})]_+\|^2\big] \nonumber \\
	&\textstyle \le \frac{2}{N}\sum_{i=1}^N \sum_{j=1}^N \sum_{t=1}^T \mathbb{E}\big[\|[h_{j,t}(x_{j,t})]_+\|^2\nonumber\\
	&\textstyle\quad  + G_h^2 \|x_{i,t} - x_{j,t}\|^2\big] \nonumber \\
	&\textstyle \le \tilde{\psi}_4 + 2\sum_{i=1}^N \sum_{t=1}^T \mathbb{E}\big[\|[h_{i,t}(x_{i,t})]_+\|^2\big] \nonumber \\
	&\quad + \textstyle \tilde{\psi}_5\sum_{i=1}^N \sum_{t=1}^T \mathbb{E}\big[\|z_{i,t+1} - x_{i,t}\|^2\big]  + \tilde{\psi}_6\sum_{t=1}^T s_t^2\nonumber \\
		&\textstyle \le\tilde{\psi}_4 + \tilde{\psi}_6\sum_{t = 1}^T s_t^2+\tilde{\psi}_7\tilde{\Lambda}_T(\hat{x}_{[T+1]}) ,\quad  \forall \hat{x}_{[T+1]}\subset \Omega.
	 \label{Lemma6-proof-ii-eq3}
\end{align}
where $\tilde{\psi}_4=2G_h^2\vartheta_3$, $\tilde{\psi}_5= 4G_h^2{\vartheta}_4$,  $\tilde{\psi}_6=4N C G_h^2\vartheta_4$, $\tilde{\psi}_7 = {4\max\{1,\tilde{\psi}_5\}}/{\min\{1, \eta/(2\gamma_0)\}}$, the second inequality holds due to the non-increasing property of $\{s_t\}$, (\ref{Cons22}) and $\mathbb{E}[\Vert{} z_{i, t+1} - \hat{z}_{i, t+1} \Vert{}^2] \le C s_{t+1}^2$, and the last inequality is due to $\varphi_{i,t+1}^Th_{i,t}(\hat{x})\le 0$ for any $\hat{x}\in \Omega$ and \eqref{Lemma5-eq2}.

%$\sum_{i=1}^{n} \sum_{j=1}^{n} \sum_{t=1}^{T} \mathbb{E}[\|[h_{i,t}(x_{j,t})]_{+}\|^2] = \sum_{i=1}^{n} \sum_{t=1}^{T} \mathbb{E}[\|[h_{t}(x_{i,t})]_{+}\|^2]$

By the Cauchy--Schwarz inequality, we have
\begin{align}
	&\textstyle \bigg( \frac{1}{N}\sum_{i = 1}^N \sum_{t = 1}^T \mathbb{E}\big[\|[h_t(x_{i,t})]_+\|\big] \bigg)^2 \nonumber\\
	&\textstyle \le \frac{T}{N}\sum_{i = 1}^N \sum_{t = 1}^T \mathbb{E}\big[\|[h_t(x_{i,t})]_+\|^2\big]. \label{Lemma6-proof-ii-eq6}
\end{align}

From $\gamma_t = \gamma_0/\alpha_t$ and the definition of the parameters in (\ref{canshu}), it holds that
\begin{align}
\textstyle	\sum_{t = 1}^T \frac{1}{\gamma_t} & \textstyle= \sum_{t = 1}^T \frac{\alpha_t}{\gamma_0} \le \frac{\alpha_0 T^{1 - \theta_1}}{(1 - \theta_1)\gamma_0}, \label{Theorem1-proof-ii-eq1}\\
\textstyle	\sum_{t = 1}^T \frac{\gamma_0}{\gamma_t^2} & \textstyle \le \sum_{t = 1}^T \frac{\alpha_t^2}{\gamma_0} \le \frac{\alpha_0^2 T^{1 - \theta_1}}{(1 - \theta_1)\gamma_0}, \label{Theorem1-proof-ii-eq2}\\
\textstyle	\sum_{t = 1}^T s_t &\textstyle = \sum_{t = 1}^T \frac{s_0}{t^{\theta_1 + 1}} \le s_0 \left( 1 + \frac{1}{\theta_1} \right). \label{Theorem1-proof-ii-eq3}
\end{align}

From (\ref{KKbound}), we have
\begin{align}\label{Theorem1-proof-ii-eq4}
\textstyle \frac{1}{\gamma_0}\sum_{i = 1}^N\mathbb{E} [{\mathcal{D}_\psi(\hat{x}, x_{i,1})}] \le\frac{NK d(\mathcal{X})}{\gamma_0}.
\end{align}

Combining the expression of $\tilde{\Lambda}_T(\hat{x}_{[T+1]})$ in (\ref{Lambda_def}) with (\ref{eq:alpha_sum}), \eqref{Theorem1-proof-ii-eq1}--\eqref{Theorem1-proof-ii-eq4}, and substituting it into \eqref{Lemma6-proof-ii-eq3} and \eqref{Lemma6-proof-ii-eq6}, we obtain
\begin{align}
	&\textstyle \big( \textstyle\frac{1}{N}\sum_{i = 1}^N \sum_{t = 1}^T \mathbb{E}\big[\|[h_t(x_{i,t})]_+\|\big] \big)^2 \nonumber\\
	&\textstyle\le \tilde{\psi}_4 T + \frac{\tilde{\psi}_7 NK d(\mathcal{X})}{\gamma_0} T  + \frac{2NG_s d(\mathcal{X}) \tilde{\psi}_7 \alpha_0}{(1 - \theta_1)\gamma_0} T^{2 - \theta_1} \nonumber\\
	&\quad \textstyle + \frac{2N G_s^2 \tilde{\psi}_7 \alpha_0^2}{\gamma_0 \eta (1 - \theta_1)} T^{2 - \theta_1} + \frac{N(\theta_1 + 1)}{\gamma_0\theta_1} d(\mathcal{X}) L_{\psi}\sqrt{C}\tilde{\psi}_7 s_0 T \nonumber\\
	&\quad \textstyle+ \frac{2(\theta_1 + 1)}{2\theta_1 + 1} \left(\frac{N}{\gamma_0} \tilde{\psi}_7 L_{\psi} C + \tilde{\psi}_6 \right) s_0^2 T. \label{eq:constraint_violation_bound}
\end{align}

Taking the square root of both sides of \eqref{eq:constraint_violation_bound}, it follows that \eqref{thm1Cvv11} holds.

(iii) By the non-expansiveness of the projection $[\cdot]_+$ and Assumption~\ref{ass:constraint} (ii), we have
\begin{align}
	&\textstyle \frac{1}{N}\sum_{j = 1}^N \sum_{t = 1}^T \mathbb{E}\big[\|[h_t(x_{j,t})]_+\|\big] \nonumber\\
	&\textstyle \le \frac{1}{N}\sum_{i = 1}^N \sum_{j = 1}^N \sum_{t = 1}^T \mathbb{E}\big[\|[h_{i,t}(x_{i,t})]_+\| \nonumber\\
	&\quad\textstyle + \|[h_{i,t}(x_{j,t})]_+ - [h_{i,t}(x_{i,t})]_+\|\big] \nonumber\\
	&\textstyle \le \frac{1}{N}\sum_{i = 1}^N \sum_{j = 1}^N \sum_{t = 1}^T \mathbb{E}\big[\|[h_{i,t}(x_{i,t})]_+\| + G_h\|x_{i,t} - x_{j,t}\|\big]. \label{Lemma6-proof-iii-eq1}
\end{align}

Applying \cite[Lemma 3]{Xinlei2025} to the mirror descent update \eqref{Algorithm1-eq6} and noting $\varphi_{i,t+1} = \gamma_t [h_{i,t}(x_{i,t})]_+$, we have
\begin{align}
	&\textstyle \|z_{i,t+1} - x_{i,t}\| \nonumber\\
	&\textstyle \le \frac{\alpha_t}{\eta} \|\tilde{\nabla}f_{i,t}(x_{i,t},y_{i,t+1},v_{i,t+1}) + \nabla h_{i,t}(x_{i,t})^T \varphi_{i,t+1}\| \nonumber\\
	&\textstyle \le \frac{\alpha_t G_s}{\eta} + \frac{G_h \gamma_0}{\eta} \|[h_{i,t}(x_{i,t})]_+\|, \label{eq:z_x_variation}
\end{align}
where the last inequality holds since $\alpha_t \gamma_t = \gamma_0$ and $\Vert{}\tilde{\nabla} f_{i,t} (x_{i,t}, y_{i,t+1}, v_{i,t+1})\Vert{} \le G_s$.

Substituting \eqref{eq:z_x_variation} into \eqref{Cons11}, and using $\mathbb{E}[\Vert{}\hat{z}_{i,t+1} - z_{i,t+1}\Vert{}] \le \sqrt{C} s_{t+1} \le \sqrt{C} s_t$, we obtain
\begin{align}
	&\textstyle\frac{1}{N}\sum_{i = 1}^N \sum_{j = 1}^N \sum_{t = 1}^T \mathbb{E}\big[\|x_{i,t} - x_{j,t}\|\big] \nonumber\\
	&\textstyle \le N\vartheta_1 + {\vartheta}_2\sum_{i = 1}^N \sum_{t = 1}^T \mathbb{E}\big[\|z_{i,t+1} - x_{i,t}\| \big] \nonumber\\
	&\quad \textstyle +  {\vartheta}_2\sum_{i = 1}^N \sum_{t = 1}^T\mathbb{E}\big[ \|\hat{z}_{i,t+1} - z_{i,t+1}\|\big] \nonumber\\
	&\textstyle \le N\vartheta_1 + {\vartheta}_2\sum_{i = 1}^N \sum_{t = 1}^T \mathbb{E}\bigg[ \frac{G_s \alpha_t}{\eta} + \frac{G_h \gamma_0}{\eta} \|[h_{i,t}(x_{i,t})]_+\| \bigg] \nonumber\\
	&\quad \textstyle+ N\sqrt{C}{\vartheta}_2\sum_{t = 1}^T s_t. \label{Lemma6-proof-iii-eq2}
\end{align}

Combining \eqref{Lemma6-proof-iii-eq1} and \eqref{Lemma6-proof-iii-eq2}, we deduce
\begin{align}
	&\textstyle \frac{1}{N}\sum_{i = 1}^N \sum_{t = 1}^T \mathbb{E}\big[\|[h_t(x_{i,t})]_+\|\big] \nonumber\\
	&\textstyle \le N G_h\vartheta_1 + \tilde{\psi}_8\sum_{t = 1}^T \alpha_t + \tilde{\psi}_9\sum_{i = 1}^N \sum_{t = 1}^T \mathbb{E}\big[\|[h_{i,t}(x_{i,t})]_+\|\big] \nonumber \\
	&\quad \textstyle + N\sqrt{C} G_h\vartheta_2\sum_{t = 1}^T s_t, \label{Lemma6-eq3}
\end{align}
where $\tilde{\psi}_8 = N G_h \vartheta_2 G_s / \eta$ and $\tilde{\psi}_9 = 1 + G_h^2 \vartheta_2 \gamma_0 / \eta$.

Invoking Slater's condition in Assumption~\ref{ass.slater},  we have
\begin{align}
	&\textstyle \mathbb{E}\big[ \sum_{i = 1}^N \sum_{t = 1}^T \frac{\varphi_{i,t+1}^T h_{i,t}(\bar{x})}{\gamma_t} \big] \nonumber\\
	&\le -\varsigma_s \sum_{i = 1}^N \sum_{t = 1}^T \mathbb{E}\big[\|[h_{i,t}(x_{i,t})]_+\|_1\big] \nonumber\\
	&\textstyle \le -\varsigma_s \sum_{i = 1}^N \sum_{t = 1}^T \mathbb{E}\big[\|[h_{i,t}(x_{i,t})]_+\|\big]. \label{Theorem1-proof-iii-eq1}
\end{align}

Selecting $\hat{x}_t = \bar{x}$ in \eqref{Lemma5-eq2} and utilizing \eqref{Theorem1-proof-iii-eq1}, it holds that
\begin{align}
\textstyle 	\varsigma_s \sum_{i = 1}^N \sum_{t = 1}^T \mathbb{E}\big[\|[h_{i,t}(x_{i,t})]_+\|\big] \le \tilde{\Lambda}_T(\{\bar{x}\}). \label{Theorem1-proof-iii-eq2}
\end{align}

 Applying the established parameter bounds \eqref{Theorem1-proof-ii-eq1}--\eqref{Theorem1-proof-ii-eq4} alongside \eqref{eq:alpha_sum} and \eqref{eq:s_alpha_sum} into $\tilde{\Lambda}_T(\{\bar{x}\})$ and dividing both sides by $\varsigma_s$, \eqref{Theorem1-proof-iii-eq2} yields
\begin{align}
	&\textstyle \sum_{i = 1}^N \sum_{t = 1}^T \mathbb{E}\big[\|[h_{i,t}(x_{i,t})]_+\|\big] \nonumber\\
	&\textstyle \le \frac{NK d(\mathcal{X})}{\varsigma_s\gamma_0} + \frac{2NG_s d(\mathcal{X}) \alpha_0}{\varsigma_s(1 - \theta_1)\gamma_0} T^{1 - \theta_1} + \frac{2N G_s^2 \alpha_0^2}{\varsigma_s\eta(1 - \theta_1)\gamma_0} T^{1 - \theta_1} \nonumber\\
	&\quad\textstyle  + \frac{N(\theta_1 + 1)}{\gamma_0 \varsigma_s\theta_1} d(\mathcal{X}) L_{\psi}\sqrt{C} s_0 + \frac{2N(\theta_1 + 1)}{\gamma_0 \varsigma_s(2\theta_1 + 1)} L_{\psi}  C s_0^2. \label{Theorem1-proof-iii-eq3}
\end{align}

Substituting (\ref{Theorem1-proof-iii-eq3}) into (\ref{Lemma6-eq3}), it follows that (\ref{thm1Cvv22}) holds.
\end{proof}

\subsection{Proof of Theorem~\ref{thm2}}\label{CCCC}
\begin{proof}
(i) From (\ref{KKey}) and (\ref{eq:v_bound_dist_res_norm}), we have
\begin{align}
	&\textstyle \varUpsilon_{i,t}  \le \left(L_{f,x} + \frac{C_{f,y}L_{g,xy}}{\mu_g}\right)\big\|y_{i,t+1} - y_{i,t}^*(x_{i,t})\big\| \nonumber\\
	& \quad\textstyle  + C_{g,xy}\big\|v_{i,t+1} - v_{i,t}^*(x_{i,t})\big\| \nonumber\\
	&\le \textstyle  \tilde{\kappa}_F
	 \big\|y_{i,t+1} - y_{i,t}^*(x_{i,t})\big\| + C_{g,xy}(r_v+ \frac{C_{f,y}}{\mu_g})\rho_H^{M/2}, \label{eq:dist_45_norm}
\end{align}
where $\tilde{\kappa}_F = L_{f,x} + (C_{f,y}L_{g,xy})/\mu_g + C_{g,xy}L_v$.

For the term $S_3$ defined in (\ref{KK1}), by virtue of the inner-loop stopping criterion $\mu_g\|y_{i,t+1} - y_{i,t}^*(x_{i,t})\| \le \|\nabla_y g_{i,t}(x_{i,t}, y_{i,t+1})\| \le \delta$, we have
\begin{align}
\textstyle 	S_3 \le \left( \frac{\tilde{\kappa}_F d(\mathcal{X})\delta}{\mu_g} + d(\mathcal{X})C_{g,xy}\left(r_v + \frac{C_{f,y}}{\mu_g}\right)\rho_H^{M/2} \right) T. \label{eq:S3_bound_final}
\end{align}

By replacing the recursive bounds \eqref{Fkappa}--\eqref{KK2} with the per-round bounds \eqref{eq:dist_45_norm} and \eqref{eq:S3_bound_final}, \eqref{eq:S346_bound} simplifies to
\begin{align}
	\textstyle S_4 + S_5 \le \tilde{\psi}_{10} + \frac{\tilde{\psi}_{11}}{N} \sum_{i=1}^{N}\sum_{t=1}^{T} \mathbb{E}\left[\|\varepsilon_{i,t}^z\|\right], \label{eq:S45_bound}
\end{align}
with $\tilde{\psi}_{10} = ( d(\mathcal{X})L + 2G_s )\vartheta_1$ and $\tilde{\psi}_{11} = ( d(\mathcal{X})L + 2G_s )\vartheta_2$.

By the convexity of $f_t$, as in the proof of Theorem~\ref{thm1}, we have
\begin{align*}
\textstyle \mathbb{E}[\mathrm{Net}\text{-}\mathrm{Reg}(T)] \le \frac{1}{N} \sum_{i=1}^N \sum_{t=1}^T \mathbb{E}[\langle \nabla f_t(x_{i,t}), x_{i,t} - x_t^* \rangle].
\end{align*}

Following the derivation of (\ref{eq:epsilon_bound}), with $\tilde{\psi}_2$ replaced by $\tilde{\psi}_{11}$, and combining \eqref{Lemma5-eq1}, \eqref{KK1}, \eqref{eq:S3_bound_final}, \eqref{eq:S45_bound}, and \eqref{quer} with $\hat{x}_t=x_t^*$ for $t\in [T]$ and $\hat{x}_{T+1}=x_T^*$, we have
\begin{align}\label{eq:Net_Reg_Final_Thm2}
	&\textstyle  \mathbb{E}[\mathrm{Net\text{-}Reg}(T)] \le \tilde{\psi}_{12} + \tilde{\psi}_{13} T + \frac{(2G_s^2 + \tilde{\psi}_{11}^2)\alpha_0}{\eta(1 - \theta_1)} T^{1 - \theta_1} \nonumber\\
	&\quad \textstyle + \left( \frac{Kd(\mathcal{X}) }{\alpha_0} + \frac{K}{\alpha_0} P_T \right) T^{\theta_1} + \frac{d(\mathcal{X})L_{\psi} \sqrt{C}s_0}{\alpha_0} (1 + \ln T),
\end{align}
where 
\begin{align}
	\textstyle  \tilde{\psi}_{12} & \textstyle = \tilde{\psi}_{10} + L_{\psi}  C \frac{s_0^2}{\alpha_0} \frac{\theta_1 + 2}{\theta_1 + 1} + \sqrt{C}\tilde{\psi}_{11} s_0 \frac{\theta_1 + 1}{\theta_1}, \nonumber\\
	\textstyle \tilde{\psi}_{13} &\textstyle  = {d(\mathcal{X})} \left(\frac{\tilde{\kappa}_F \delta}{\mu_g} + C_{g,xy}(r_v+\frac{C_{f,y}}{\mu_g}) \rho_H^{M/2} \right). \nonumber
\end{align}

By setting $M =\max \{1, \lceil -\ln T / \ln \rho_H \rceil \} $, we have $\rho_H^{M/2} \le T^{-1/2}$. Furthermore, choosing $\delta = \mathcal{O}(T^{-1/2})$ ensures that $\tilde{\psi}_{13}T = \mathcal{O}(\sqrt{T})$, it follows that (\ref{thm2ret}) holds.

(ii) and (iii) proofs follow similarly to those in Theorem~\ref{thm1}.

(iv) To bound the total inner iterations $I_{net}$, we need to explicitly extract $K_{i,t}$. If $K_{i,t}=0$, the desired upper bound on $K_{i,t}$ holds trivially. Hence, it suffices to consider $K_{i,t} \ge 1$.
Since the inner-loop does not terminate at step $K_{i,t}-1$, the gradient norm strictly exceeds $\delta$. Leveraging Assumption~\ref{ass:inner}~(ii), we have
\begin{align}\label{La11}
\textstyle 	\|y_{i,t+1}^{K_{i,t}-1} - y_{i,t}^*(x_{i,t})\| &\textstyle \ge \frac{1}{L_{g,y}} \|\nabla_y g_{i,t}(x_{i,t}, y_{i,t+1}^{K_{i,t}-1})\| \nonumber\\
	&\textstyle > \frac{\delta}{L_{g,y}}. 
\end{align}

Under the stated stepsize condition, each inner-gradient step in (\ref{eq:y_inner_loop}) contracts the distance to $y_{i,t}^*(x_{i,t})$ by a factor at most $\rho_y = 1 - \beta\rho \in (0, 1)$.
\begin{align}\label{La22}
\textstyle \|y_{i,t+1}^{K_{i,t}-1} - y_{i,t}^*(x_{i,t})\| 
	\le {\rho_y}^{K_{i,t}-1} \|y_{i,t+1}^0 - y_{i,t}^*(x_{i,t})\|. 
\end{align}

Combining (\ref{La11}) and (\ref{La22}) yields
\begin{align}\label{La33}
\textstyle 	{\rho_y}^{-(K_{i,t}-1)} \le \frac{L_{g,y}}{\delta} \|y_{i,t+1}^0 - y_{i,t}^*(x_{i,t})\|. 
\end{align}

Taking the natural logarithm on both sides of (\ref{La33}) and applying the inequality $\ln(x) \le x$ for $x > 0$, we obtain
\begin{align}
\textstyle 	K_{i,t} \le 1 + \frac{\ln(\delta^{-1})}{\ln(\rho_y^{-1})} + \frac{L_{g,y}+\|y_{i,t+1}^0 - y_{i,t}^*(x_{i,t})\|}{\ln(\rho_y^{-1})} . \label{eq:dist_50_l1}
\end{align}

For $t\ge 2$, by the triangle inequality, the initialization distance $\|y_{i,t+1}^0 - y_{i,t}^*(x_{i,t})\|$ can be decomposed as
\begin{align}
	&\textstyle\|y_{i,t+1}^0 - y_{i,t}^*(x_{i,t})\| \nonumber \\
	&\textstyle \le \|y_{i,t+1}^0 - y_{i,t-1}^*(x_{i,t-1})\| + \|y_{i,t-1}^*(x_{i,t-1}) - y_{i,t}^*(x_{i,t-1})\| \nonumber \\
	&\quad \textstyle + \|y_{i,t}^*(x_{i,t-1}) - y_{i,t}^*(x_{i,t})\|. \label{eq:dist_51_decomp_l1}
\end{align}
Since $y_{i,t+1}^0 = y_{i,t}$, the first term of (\ref{eq:dist_51_decomp_l1}) is bounded by $\delta/\mu_g$ according to the stopping criterion in (\ref{DDDDD39}). By the $L_y$-Lipschitz continuity of $y_{i,t}^*(\cdot)$ given in (\ref{eq:lip_y_star}), the third term of (\ref{eq:dist_51_decomp_l1})  is bounded by $L_y \|x_{i,t} - x_{i,t-1}\|$. Thus, for $t\ge 2$, we have
\begin{align}
	&\textstyle\|y_{i,t+1}^0 - y_{i,t}^*(x_{i,t})\| \le \frac{\delta}{\mu_g} + L_y \|x_{i,t} - x_{i,t-1}\| \nonumber \\
	&\qquad \textstyle+ \|y_{i,t-1}^*(x_{i,t-1}) - y_{i,t}^*(x_{i,t-1})\|. \label{eq:dist_51_final_l1}
\end{align}

Substituting \eqref{eq:dist_51_final_l1} into \eqref{eq:dist_50_l1} and summing over $i \in [N]$ and $t \in [T]$, we have
\begin{align}
	&\textstyle \mathcal{I}_{net} = \sum_{i=1}^N  K_{i,1}+\sum_{i=1}^N \sum_{t=2}^T K_{i,t} \nonumber \\
	&\textstyle \le \sum_{i=1}^N  K_{i,1} +\sum_{i=1}^N \sum_{t=2}^T \left( 1 + \frac{\ln(\delta^{-1})+L_{g,y}}{\ln(\rho_y^{-1})} + \frac{\delta}{\mu_g \ln(\rho_y^{-1})} \right) \nonumber \\
	&\quad \textstyle+ \frac{ L_y}{\ln(\rho_y^{-1})} \sum_{i=1}^N \sum_{t=2}^T \|x_{i,t} - x_{i,t-1}\| \nonumber \\
	&\quad \textstyle + \frac{1}{\ln(\rho_y^{-1})} \sum_{t=2}^T \sum_{i=1}^N \|y_{i,t-1}^*(x_{i,t-1}) - y_{i,t}^*(x_{i,t-1})\|\nonumber\\
	&\textstyle \le\sum_{i=1}^N  K_{i,1}  +\sum_{i=1}^N \sum_{t=2}^T \left( 1 + \frac{\ln(\delta^{-1})+L_{g,y}}{\ln(\rho_y^{-1})} + \frac{\delta}{\mu_g \ln(\rho_y^{-1})} \right) \nonumber\\
	&\quad \textstyle +\frac{N L_yd(\mathcal{X})T}{\ln(\rho_y^{-1})} + \frac{N }{\ln(\rho_y^{-1})} H_{1,T}. \label{eq:inet_pre_sup}
\end{align}

From \eqref{eq:dist_50_l1} and \eqref{eq:inet_pre_sup}, it follows that \eqref{totalNum} holds.
\end{proof}

\bibliographystyle{IEEEtran}
\bibliography{BiOnlineOpti}

\ifbool{submitA}
{
	\vspace{-40pt}
	\begin{IEEEbiography}[{\includegraphics[width=1in,height=1.25in,clip,keepaspectratio]{FanLi.pdf}}]{Fan Li}
		received the B.S. degree in applied mathematics from Northwest Normal University, Lanzhou, China, in 2021, and the M.S. degree in control science and engineering from Central South University, Changsha, China, in 2024. He is currently pursuing the Ph.D. degree in control science and engineering at Northeastern University, Shenyang, China. His current research interests include distributed optimization and game theory, fixed point theory of operators.
	\end{IEEEbiography}
	\vspace{-40pt}
	\begin{IEEEbiography}[{\includegraphics[width=1in,height=1.25in,clip,keepaspectratio]{Xinlei_Yi.jpg}}]{Xinlei Yi} received the B.S. and M.S. degrees in mathematics from China University of Geoscience, Wuhan, China, and Fudan University, Shanghai, China, in 2011 and 2014, respectively, and the Ph.D. degree in electrical engineering from KTH Royal Institute of Technology, Stockholm, Sweden, in 2020. He was a Postdoc with KTH Royal Institute of Technology from 2020 to 2022 and with the Lab for Information and Decision Systems, Massachusetts Institute of Technology, Cambridge, MA, USA, from 2022 to 2024. He is a tenure-track professor of Shanghai Institute of Intelligent Science and Technology, Tongji University.
		
		Dr. Yi was selected as one of the four finalists for the 2021 European Systems and Control PhD Thesis Award. His current research interests include distributed online and optimization, online optimization, meta-learning and graph neural networks.
	\end{IEEEbiography}
	\vspace{-40pt}
	\begin{IEEEbiography}[{\includegraphics[width=1in,height=1.25in,clip,keepaspectratio]{Guanghui_Wen.png}}]{Guanghui Wen} (Senior Member, IEEE) received the Ph.D. degree in mechanical systems and control from Peking University, Beijing, China, in 2012.
		
		Currently, he is an Endowed Chair Professor and the Vice Dean of the School of Automation, Southeast University, Nanjing, China. His current research interests include autonomous intelligent systems, complex networked systems, distributed control and optimization, resilient control, and distributed reinforcement learning.
		
		Prof. Wen was the recipient of the National Science Fund for Distinguished Young Scholars, the China Youth Science and Technology Award, the Australian Research Council Discovery Early Career Researcher Award, and the Asia Pacific Neural Network Society Young Researcher Award. He is an IET Fellow.
	\end{IEEEbiography}
	\vspace{-40pt}
	\begin{IEEEbiography}[{\includegraphics[width=1in,height=1.25in,clip,keepaspectratio]{Shiyang.jpg}}]{Yang Shi} (Fellow, IEEE) received the B.Sc. and first Ph.D. degrees in mechanical engineering and automatic control from Northwestern Polytechnical University, Xi’an, China, in 1994 and 1998, respectively, and the second Ph.D. degree in electrical and computer engineering from the University of Alberta, Edmonton, AB, Canada, in 2005.
		
		He was a Research Associate with the Department of Automation, Tsinghua University, Beijing, China,
		from 1998 to 2000. From 2005 to 2009, he was an Assistant Professor and an Associate Professor with the Department of Mechanical Engineering, University of Saskatchewan, Saskatoon, SK, Canada. In 2009, he joined the University of Victoria (UVic),
		Victoria, BC, Canada, where he is currently a Professor with the Department of Mechanical Engineering. His current research interests include networked and distributed systems, model predictive control, cyber-physical systems, robotics
		and mechatronics, navigation and control of autonomous systems, and energy system applications.
		
		Dr. Shi received the University of Saskatchewan Student Union Teaching Excellence Award in 2007, and the Faculty of Engineering Teaching Excellence Award in 2012 at UVic, on teaching and mentorship, and the 2023 REACH Award for Excellence in Graduate Student Supervision and Mentorship. On research, he is the recipient of the JSPS Invitation Fellowship
		(short-term) in 2013, the UVic Craigdarroch Silver Medal for Excellence in Research in 2015, the 2017 IEEE Transactions on Fuzzy Systems Outstanding Paper Award, the Humboldt Research Fellowship for Experienced Researchers in 2018, the CSME Mechatronics Medal in 2023, the IEEE Dr.-Ing. Eugene Mittelmann Achievement Award in 2023, the 2024 IEEE Canada Outstanding Engineer Award. He is IFAC Council Member; VP on Conference Activities of IEEE IES from 2022 to 2025 and the Chair of IEEE IES Technical Committee on IEEE INDUSTRIAL CYBERCPHYSICAL SYSTEMS. He is currently an Editor-in-Chief of IEEE TRANSACTIONS ON INDUSTRIAL ELECTRONICS, he also serves as Associate Editor for Automatica, IEEE TRANSACTIONS ON AUTOMATIC CONTROL, and Annual Review in Controls. He is a Fellow of ASME, CSME, Engineering Institute of Canada, Canadian Academy of Engineering, and a registered Professional Engineer in British Columbia, Canada.
	\end{IEEEbiography}
	\vspace{-40pt}
	\begin{IEEEbiography}[{\includegraphics[width=1in,height=1.25in,clip,keepaspectratio]{Tao_Yang.jpg}}]{Tao Yang} (Senior Member, IEEE) is a Professor at the State Key Laboratory of Synthetical Automation for Process Industries, Northeastern University. He was an Assistant Professor at the Department of Electrical Engineering, University of North Texas, Denton, USA, from 2016-2019. He received the Ph.D. degree in electrical engineering from Washington State University in 2012. Between August 2012 and August 2014, he was an ACCESS postdoctoral researcher with the ACCESS Linnaeus Centre, Royal Institute of Technology, Sweden. He then joined the Pacific Northwest National Laboratory as a postdoc, and was promoted to Scientist/Engineer II in 2015.
		
		His research interests include industrial artificial intelligence, integrated optimization and control, distributed control and optimization with applications to process industries, cyber physical systems, networked control systems, and multi-agent systems. He is an Associate Editor for IEEE Transactions on Control of Network Systems, IEEE Transactions on Control Systems Technology, and IEEE Transactions on Neural Networks and Learning System. He received Ralph E. Powe Junior Faculty Enhancement Award and Best Student Paper award (as an advisor) of several international conferences.
	\end{IEEEbiography}
}{}

\end{document}